  \let\oldparagraph\paragraph
  \renewcommand{\paragraph}{
    \@ifstar
      \xxxParagraphStar
      \xxxParagraphNoStar
  }
  \newcommand{\xxxParagraphStar}[1]{\oldparagraph*{#1}\mbox{}}
  \newcommand{\xxxParagraphNoStar}[1]{\oldparagraph{#1}\mbox{}}
  \let\oldsubparagraph\subparagraph
  \renewcommand{\subparagraph}{
    \@ifstar
      \xxxSubParagraphStar
      \xxxSubParagraphNoStar
  }
  \newcommand{\xxxSubParagraphStar}[1]{\oldsubparagraph*{#1}\mbox{}}
  \newcommand{\xxxSubParagraphNoStar}[1]{\oldsubparagraph{#1}\mbox{}}
\patchcmd\longtable{\par}{\if@noskipsec\mbox{}\fi\par}{}{}
\def\maxwidth{\ifdim\Gin@nat@width>\linewidth\linewidth\else\Gin@nat@width\fi}
\def\maxheight{\ifdim\Gin@nat@height>\textheight\textheight\else\Gin@nat@height\fi}
\def\fps@figure{htbp}
  \renewcommand*\contentsname{Table of contents}
  \newcommand\contentsname{Table of contents}
  \renewcommand*\listfigurename{List of Figures}
  \newcommand\listfigurename{List of Figures}
  \renewcommand*\listtablename{List of Tables}
  \newcommand\listtablename{List of Tables}
  \renewcommand*\figurename{Figure}
  \newcommand\figurename{Figure}
  \renewcommand*\tablename{Table}
  \newcommand\tablename{Table}
\newcommand{\anon}{1} 
\tikzset{
    -Latex,auto,node distance =1 cm and 1 cm,semithick,
    state/.style ={ellipse, draw, minimum width = 0.7 cm},
    state1/.style ={ draw, minimum width = 0.7 cm},
    point/.style = {circle, draw, inner sep=0.04cm,fill,node contents={}},
    bidirected/.style={Latex-Latex,dashed},
    el/.style = {inner sep=2pt, align=left, sloped}
}
\newcommand{\ind}{\mathrel{\text{\scalebox{1.07}{$\perp\mkern-10mu\perp$}}}}
\newcommand{\Nc}{{\cal N}_{\mathcal{R}_C}}
\newcommand{\Rc}{\mathcal{R}_{C}}
\newcommand{\calv}{{\cal V}}
\newcommand{\calm}{{\cal M}}
\newcommand{\bmv}{{\bm v}}
\newcommand{\KL}{\mathcal{D}_{\text{KL}}}
\newcommand{\bigO}{\mathcal{O}}
\newcommand{\calM}{\mathcal{M}}
\newcommand{\spacingset}[1]{%
  \renewcommand{\baselinestretch}{#1}%
  \small
  \normalsize
}
\newcommand{\setsupplementnumbering}{%
  \setcounter{equation}{0}%
  \setcounter{table}{0}%
  \setcounter{theorem}{0}%
  \setcounter{lemma}{0}%
  \setcounter{proposition}{0}%
  \setcounter{corollary}{0}%
  \setcounter{section}{0}%
  \setcounter{figure}{0}%
  \setcounter{example}{0}%
  \setcounter{assumption}{0}%
  \renewcommand{\theequation}{S\arabic{equation}}%
  \renewcommand{\thetable}{S\arabic{table}}%
  \renewcommand{\thefigure}{S\arabic{figure}}%
  \renewcommand{\thesection}{S\arabic{section}}%
  \renewcommand{\thetheorem}{S\arabic{theorem}}%
  \renewcommand{\thelemma}{S\arabic{lemma}}%
  \renewcommand{\theproposition}{S\arabic{proposition}}%
  \renewcommand{\thecorollary}{S\arabic{corollary}}%
  \renewcommand{\theexample}{S\arabic{example}}%
  \renewcommand{\theassumption}{S\arabic{assumption}}%
}
\newcommand{\red}{\color{red}}
\newcommand{\blue}{\color{blue}}
\theoremstyle{plain}
\newtheorem{theorem}{Theorem}
\newtheorem*{theorem*}{Theorem}
\newtheorem{corollary}{Corollary}
\newtheorem{proposition}{Proposition}
\newtheorem{remark}{Remark}
\newtheorem{lemma}{Lemma}
\newtheorem{definition}{Definition}
\newtheorem{assumption}{Assumption}
\crefname{assumption}{Assumption}{Assumptions}
\theoremstyle{plain}
\theoremstyle{remark}
\newtheorem{example}{Example}
\begin{document}

\def\spacingset#1{\renewcommand{\baselinestretch}%
{#1}\small\normalsize} \spacingset{1}

\if1\anon
{
  \title{\bf The Categorical Instrumental Variable Model:\\ Characterization, Partial Identification, and Statistical Inference}
  \author{Yilin Song$^1$\thanks{Correspond to ys4057@cumc.columbia.edu.}, F.~Richard Guo$^2$, K.C.~Gary Chan$^3$, Thomas~S.~Richardson$^4$\vspace{.4cm}\\ 
    $^1$Department of Biostatistics, Columbia University \\
    $^2$Department of Statistics, University of Michigan \\
    $^3$Department of Biostatistics, University of Washington\\
    $^4$Department of Statistics, University of Washington
    }
    \date{}
  \maketitle
} \fi

\if0\anon
{
  \bigskip
  \bigskip
  \bigskip
  \begin{center}
    {\LARGE\bfseries
  The Categorical Instrumental Variable Model: \\ Characterization, Partial Identification, and Statistical Inference\par}
\end{center}
  \medskip
} \fi






\begin{abstract}

We study categorical instrumental variable (IV) models with instrument, treatment and outcome taking finitely many values. 
We derive a simple closed-form characterization of the set of joint distributions of potential outcomes that are compatible with a given observed data distribution in terms of a minimal set of inequalities.
These inequalities unify several different IV models defined by versions of the independence and exclusion restriction assumptions. They lead to sharp bounds on causal functionals and provide a sharp criterion for model falsification.
For linear functionals of the joint counterfactual distribution, such as pairwise average treatment effects and probabilities of potential outcomes, we construct confidence intervals with simultaneous finite-sample coverage, using a tail bound on the Kullback--Leibler divergence.
We illustrate our method using data from the Minneapolis Domestic Violence Experiment.

\end{abstract}

\noindent%
{\it Keywords:} instrumental variable; partial identification; Strassen's theorem; average treatment effect; confidence region; concentration inequality
\vfill

\spacingset{1.8} 


\vspace{-1cm}\section{Introduction}\vspace{-0.3cm}

This article studies partial identification of instrumental variable (IV) models in which the instrument, treatment, and outcome are categorical. 

Let $X$ and $Y$ denote the exposure and outcome of interest respectively. Generally speaking, a variable $Z$ is a valid instrumental variable if certain versions of the following two assumptions hold: (1) an independence (or exchangeability) condition: $Z$ is independent of any unmeasured confounder $U$ of the treatment-outcome relationship; (2) an exclusion restriction: there is no direct effect of $Z$ on the outcome $Y$ other than through the treatment of interest $X$. Both of these assumptions are individually untestable. A third relevance assumption, which states that $Z$ is associated with the treatment $X$, is also often invoked in the IV literature. However, for the purposes of our analysis it is not required; our bounds will still be valid under any association between $Z$ and $X$. 
A directed acyclic graph (DAG) representing the assumptions on instrumental variables is shown in \cref{fig:dag_assump}.

\vspace{-0.4cm} \subsection{Motivating example: Minneapolis domestic violence experiment}\label{sec:motivating} \vspace{-0.3cm}
To illustrate our approach, we consider the Minneapolis domestic violence experiment \citep{berkandsherman}: the Minneapolis Police Department and the Police Foundation conducted an experiment from early 1981 to mid-1982 for testing the relation between police response to domestic violence and whether the suspect subsequently re-offended. 
When the officers responded to a domestic violence case, they were randomly recommended by lottery to take one of three courses of action: arrest the suspect; 
 send the suspect from the scene of the assault for eight hours;
 or provide advice. Following
 \citet{berkandsherman} and \citet{angristcrime}, we will label these strategies {\em Arrest}, {\em Separate} and {\em Advise}.\footnote{In the original experiment, {\em Separate} was denoted {\em Send}.} The study followed up all cases after a 6-month period to determine whether the suspect had re-offended either via self-reports or from a police database. There were a total of 313 cases in the experiment. Through random assignment, 92 cases were recommended to {\em Arrest}, 108 to {\em Advise}, and 113 to {\em Separate}.

In many randomized controlled trials (RCTs), participants may not receive their assigned treatment, leading to non-compliance. In the Minneapolis experiment, a responding police officer had the option to implement a different response ($X$) from the one that they were randomly recommended ($Z$), resulting in non-compliance. The full data are shown in \cref{tab:data}, where $Y=2$ indicates that the suspect re-offended during the 6-month follow-up period and $Y=1$ indicates otherwise. Notice that in many cases $X\neq Z$, suggesting that the officer did not adhere to the recommended action.

\begin{table}[!ht]
\centering
\caption{\centering Minneapolis Domestic Violence Experiment \citep[Table 2]{berk1988police}:\\
each cell shows \#(no re-offence) / \#(re-offence) in 6 months}
\label{tab:data}
\resizebox{0.6\linewidth}{!}{%
\begin{tabular}{c | c c c | c}
\toprule
$Y=1$ / $Y=2$ & $X=\text{Arr}$ & $X=\text{Adv}$ & $X=\text{Sep}$ & Total \\
\midrule
$Z=\text{Arr}$ & \boxed{{81} / {10}} & {0} / {0} & {1} / {0} & {82} / {10} \\
$Z=\text{Adv}$ & {15} / {3} & \boxed{{69} / {15}} & {3} / {3} & {87} / {21} \\
$Z=\text{Sep}$ & {21} / {5} & {4} / {1} & \boxed{{62} / {20}} & {87} / {26}\\
\bottomrule
\end{tabular}
}
\end{table}

\vspace{-0.2cm}Historically the problem of treatment non-compliance was often addressed via either an Intention-to-treat (ITT) or Per Protocol (PP) analysis \citep{PPvsITT}. The ITT approach analyzes the effect of the assigned treatment, regardless of any subsequent non-compliance (e.g., the last column of \cref{tab:data}). Though the ITT causal estimand is identified, due to random assignment, it does not assess the efficacy of the treatment itself, and may lack ecological validity since compliance behavior may be highly context dependent. Meanwhile, the PP analysis considers only those participants who fully adhered to their assigned treatment protocol (e.g., the boxed cells in \cref{tab:data}). However, this comparison will not, in general, be causal because the set of people who follow the protocol in one treatment arm may not be comparable with those who do so in another arm.

In the case of a binary treatment, a third approach, pioneered by 
\cite{imbens-angrist}, focuses on the causal effect of treatment among the {\it compliers}---those who would have taken their assigned treatment no matter which arm they were assigned to---referred to as the local average treatment effect (LATE) \citep{angrist2sls, identifictionIV1996}. An advantage of this framework is that no further work is required, conceptually, to specify the circumstances under which such a subject would have taken treatment or control. 

However, the LATE is not identified without additional assumptions, such as that there are no {\it defiers}, defined as those who would always take the treatment opposite to their assignment. 
Although this assumption has testable implications, and thus may be falsified, it must be argued for on substantive grounds, which may not apply in every setting. Also, though useful in establishing the existence of a stratum (i.e., compliers) in which the treatment does have an effect, it is less clear how this should inform specific decisions to use or withhold treatment, since this requires determining whether a subject would have been a complier had they been in the experiment \citep{kennedy:classifying-compliers:2020}. For example, in the case of the Minneapolis study, this would require judging that had a domestic violence incident happened during the course of the study, the responding officer would have judged it appropriate to use the assigned strategy, whatever that was. \citet{smallthree-arm} consider extensions of the no defier assumption in a setting where there are two active treatment arms and a placebo arm; see also \citet{heckman-vytlacil:2005} for related work.

By design, the LATE does not assess the consequences of adopting a uniform policy to be applied to all subjects, including non-compliers. In contrast, the approach that we consider here focuses on the average treatment effect (ATE) of the treatment on the outcome, which would be identified in an experiment on the same population, and measures the causal effect of the treatment itself on the whole population without regard to the compliance behavior and thus may be more relevant to policy decisions \citep{robins:discussion:1996}. At the same time, consideration of this global ATE does presume that it is meaningful, at least conceptually, to consider applying (not merely assigning) each treatment to every subject.

Without additional assumptions, the ATE is only partially identified \citep{manski1990,robins:1989:ivbounds}, but non-trivial bounds that exclude zero can be obtained even in settings with substantial non-compliance \citep{BPbounds}. 
The case in which the treatment $X$ is binary has been studied extensively. Sharp bounds have also been obtained for a binary treatment when the instrument takes more levels \citep{RichardsonRobins}. For example, in an encouragement design, subjects may be assigned to several different levels of (financial) incentive to start treatment. 

However, approaches that may be applied to studies, such as the Minneapolis experiment, in which the \emph{treatment} itself takes more than two levels are less well developed. 
This presents a challenge even for a researcher who is primarily interested in the ATE contrasting only two treatments, such as {\em Advise} vs.~{\em Separate}.\footnote{These were collectively referred to as ``Coddling'' strategies by \citet{angristcrime}.} Since $X$, the treatment received, was not randomized, the availability of the third treatment ({\em Arrest}) must be accounted for. In particular, it would be inappropriate to apply bounds developed for binary $X$, by simply deleting the first column from \cref{tab:data}. Such an approach implicitly conditions on $X\neq$ {\em Arrest}. This is problematic because $X$ is typically affected by the random assignment $Z$; it is quite possible that there are individuals who would not be arrested if assigned to {\em Separate}, but would be arrested if assigned to {\em Advise}. Consequently, the sets of subjects who were {\em not} arrested in different $Z$ arms are not necessarily comparable (in other words, conditioning on $X\neq$ {\em Arrest} can break the independence between $Z$ and $U$ in \cref{fig:dag_assump}), and thus were we to calculate the IV bounds for binary $X$, using the counts in the {\em Advise} and {\em Separate} columns of \cref{tab:data}, the resulting bounds would not be guaranteed to cover the ATE comparing {\em Advise} versus {\em Separate} \citep{Swanson:biasselection}.

\vspace{-0.5cm}\subsection{Contribution of the paper}\vspace{-0.3cm}

Our paper addresses this methodological gap: we provide a simple characterization, via linear inequalities, of the relationship between the joint distribution over potential outcomes and the observed data distribution under IV models when the instrument, treatment and outcome take finitely many values.  
In fact, we show that our characterization applies to five different IV models defined in terms of different versions of an independence condition and an exclusion restriction. The set of inequalities we obtain are necessary, sufficient, and non-redundant.
Our proof of sufficiency is based on Strassen's Theorem, thereby circumventing the whole machinery of random set theory and capacities that have been employed in other analyses (see, e.g., \citealp{beresteanu,Russell}). This also leads to a self-contained proof of non-redundancy.
 
The linear characterization enables us to compute bounds on average treatment effects via linear programming. Importantly, the size of the set of inequalities grows linearly with the size of the state-space of the instrument. This reflects a one-to-one correspondence between the inequalities arising from the observed distributions in any two different $Z$ arms. For corresponding inequalities, the associated hyper-planes are all parallel. In practice this means that determining bounds on a pairwise ATE when the instrument takes $Q$ levels is computationally no harder than doing so given a single $Z$ arm (or, equivalently, given the joint distribution of treatment and outcome from an observational study).

The characterization also leads to a sharp falsification test: an observed distribution is compatible with any of the five instrumental variable models that we consider if and only if there is a solution to the set of linear inequalities given by our characterization.

Further, we show that by solving a convex program, one can construct an interval that contains the upper and lower bound on any linear functional of the joint counterfactual distribution with a (conservative) finite-sample coverage guarantee. {Falsification is incorporated directly into this confidence-region construction, thereby unifying model assessment and statistical inference within a single procedure.} The convex program is formed by supplementing the linear program arising from our characterization with additional constraints relating the observed population distribution to the empirical distribution given by a finite-sample tail bound on the Kullback--Leibler divergence under multinomial sampling \citep{GuoChernoff}.

\vspace{-0.5cm}\subsection{Related prior work}\vspace{-0.3cm}

IV models with binary instrument, treatment, and outcome have been well-studied. \cite{robins:1989:ivbounds}, \cite{manski1990}, \cite{BPbounds}, and \cite{RichardsonRobins} derived sharp lower and upper bounds on the ATE under different versions of the independence and exclusion restriction conditions; see \cite{swansonreview} for a comprehensive discussion.   \cite{RichardsonRobins} 
extended these results by
showing that when the instrument takes $Q$ levels, but treatment and outcome are binary, the joint distribution over the potential outcomes $P(Y(x_1),Y(x_2))$
is characterized by a set of $8Q$ inequalities.
This characterization leads to simple closed form expressions for bounds on the ATE.

\cite{beresteanu} use random set theory, and in particular, Artstein's Theorem, to provide a characterization of the joint distribution of the potential outcomes in an instrumental variable model, where the treatment takes finitely many values, while the instrument and outcome take values in a compact subset of $\mathbb{R}$. Though the characterization is elegant, the resulting set of inequalities can be computationally prohibitive, with its size growing super-exponentially in the number of levels of treatment since there is one inequality for every (non-trivial) subset of joint values taken by the vector of potential outcomes. More precisely, if the treatment $X$ and outcome $Y$ take $K$ and $M$ levels respectively, then the vector of potential outcomes $(Y(x_1),\ldots, Y(x_K))$ takes $M^K$ different values; thus the result requires $Q(2^{\left(M^K\right)}-2)$ inequalities. For example, when $X$ and $Y$ both take $3$ values, Artstein's Theorem yields $ (2^{27}-2)\cdot Q > 10^8\cdot Q$ inequalities, whereas it follows from \cref{cor:numbounds} below that in fact only $333\cdot Q$ are required! {Thus, our set of inequalities are different from those of \cite{beresteanu}, unless $X$ and  $Y$ are binary; see \cref{tab:num-ineqs-km-sgcr-bmm}.}

{\cite{balke:1995} and \cite{gabriel:2025} give closed-form bounds on the ATE when $X$ is ternary, with $Z$ and $Y$ binary. They further showed that the classical Balke--Pearl bounds for a binary treatment remain valid when applied to a three-level treatment by plugging in the probabilities for the two treatment levels of interest as observed in the full three-level distribution. However, these bounds are generally not sharp, as the sharp three-level bounds incorporate additional constraints involving the third treatment level and can therefore produce strictly narrower bounds. }

\citet{chesher:rosen:2017} and \citet{Russell} noted previously that the set of inequalities resulting from a direct application of Artstein's Theorem was larger than required. 
\citet{LuoWang} give a general characterization of a subset of non-redundant inequalities implied by Artstein's Theorem, while \cite{ponomarev2026sharp} further extended the results.
\citet{Russell} gave a set of inequalities that he states result from applying the characterization of \citet{LuoWang} to the IV model.
In Supplement \ref{appendix:russell} we show that, in general, the set of inequalities described by Russell is too small.
Thus, the resulting inequalities may include joint distributions that are incompatible with the IV models and can fail to provide a sharp bound for functionals of the joint counterfactual distribution.

Other authors have addressed the question of whether a given observed distribution is compatible with particular sets of IV assumptions. 
\citet{pearl1994} introduces an ``instrumental inequality'' that provides a necessary condition, thus providing a falsification test. {For the binary instrumental variable model, \citet{wang2017falsification} further developed finite-sample inference procedures for these instrumental inequalities, enabling formal falsification tests with controlled Type I error.} Applying polyhedral geometry, 
\citet{joris:berkeley-admissions} show that when $K\geq 2$, while $Q=M=2$, the IV inequalities define the observed model. In contrast, \citet{bonet} showed that when $Q=3$ and $K=M=2$ there are additional inequalities. 
\citet{KedagniMourifie} 
proposed a generalized set of inequalities that are necessary for an observed distribution to be compatible with the IV model defined by
Individual-level exclusion and Randomization; see A\ref{assumption:exclusion}-1, 
A\ref{assumption:indep}-1 below. They further showed that when $K=M=2$ these inequalities are also sufficient and thus define the model for the observed distribution. 
Our results generalize this result by showing that five different formulations of the IV model all lead to the same set of observed distributions. {Additionally, no sharpness results beyond $K=M=2$ were established in \cite{KedagniMourifie}. 
Although the authors considered settings with non-binary instrument, treatments and outcomes, we provide a direct comparison demonstrating that their bounds are generally not sharp (Supplementary \cref{supp:KM}).}

\citet[\S8.4]{pearl:2000} notes that in the case of a binary IV model the instrumental 
inequality arises from requiring that
the bounds on the ATE are non-empty. Similarly, our characterization in \cref{theo:redundancy} provides a sharp test in that an observed distribution is compatible with the model if and only if the set of inequalities implies a non-empty set of distributions for the potential outcomes.

Most of the literature on delivering inference for a partially identified treatment effect $\tau$ in the IV setting employs methods for conditional moment inequalities {\protect{\citep{andrews2013inference}}} or intersection bounds \citep{chernozhukov2013intersection}. Both methods require obtaining relatively explicit bounds for $\tau$, in the form of $\{\tau: g_P(\tau, v) \leq 0 \text{ for all } v \in V\}$ for the former ($g_P(\tau,v)$ is a conditional moment indexed by $v$) and the form of $\tau \in (\sup_{v \in V} l_P(v), \inf_{v \in V} u_P(v))$ for the latter; the reader is referred to \citet{canay2017practical,shi2025inference} for surveys on these methods. Sophisticated bootstrap \citep{ramsahai2011likelihood} and Monte-Carlo \citep{sachs2025improved} methods have also been considered for binary IV. Similar to the inference approach in this paper, \citet{duarte2024automated} constructs a confidence region for the observed distribution in the discrete setting, which is then incorporated into a polynomial program for computing confidence intervals. For Bayesian methods, see also \citet{richardson2011transparent,silva2016causal}.

 \begin{figure}
 		\centering
 		\begin{tikzpicture}
 			\node[state] (x) at (0,0) {$X$};
 			\node[state] (z) [left =of x, xshift=-5cm] {$Z$};
 			\node[state] (y) [right =of x, xshift=2cm] {$Y$};
 			\node[state] (u) [above right =of x,xshift=0.3cm,yshift=0.5cm] {$U$};
 			
			\path (z) -- (y) node[midway, above, yshift=-1.85cm]
      {\textcolor{red}{\XSolidBrush}};
             \path (z) -- (u) node[midway, above, yshift=-0.35cm]
      {\textcolor{red}{\XSolidBrush}};

                \draw[<->, dashed, thick] (z) -- (u) node [midway, above, sloped, font=\footnotesize, yshift=0.1cm] {(1) Independence condition};
                
                \draw [->, dashed, looseness=0.8, thick] (z) to [out=-30,in=-150] node [midway, below, sloped, font=\footnotesize, yshift=-0.15cm]{(2) Exclusion Restriction}(y);
                
                \draw [->, thick] (z) to  (x) node [below, xshift = -3cm, font=\footnotesize] { Relevance condition};
                
                \draw [->, thick] (x) to  (y);
                
                \draw [->, thick] (u) to  (x);
                
                \draw [->, thick] (u) to  (y);
 		\end{tikzpicture} 
   \centering
    \caption{Directed acyclic graph (DAG) representing the assumptions of a valid instrumental variable, where the dashed edges are assumed to be absent. }
    \label{fig:dag_assump}
 \end{figure}

\vspace{-0.6cm}\subsection{Outline} \vspace{-0.3cm}

In \cref{sec:notation}, we introduce our notation and assumptions and present five instrumental variable models where our results apply. In \cref{sec:main-res}, we present our main theorems on the characterization of the joint probability distribution of the potential outcomes. We lay out the general setup and the ingredients essential to the proof of our main results in \cref{sec:sufficiency,sec:redundancy}; additional details are presented in the Supplementary Materials. In \cref{sec:inference}, we discuss statistical inference based on a finite-sample tail bound of Kullback--Leibler divergence.
In \cref{sec:real-data}, we illustrate our method with real data from the Minneapolis Domestic Violence Experiment, where the instrument and treatment both take three levels. 
Finally, we conclude our paper with discussion of future work in \cref{sec:discussion}.

\vspace{-0.6cm} \section{Notation, assumptions, and models}\label{sec:notation} \vspace{-0.3cm}
Consider a categorical outcome $Y$ with $M \geq 2$ levels, a treatment variable $X$ with $K \geq 2$ levels, and an instrumental variable $Z$ with $Q \geq 1$ levels. The setting of a single $Z$-arm, where $Q=1$, corresponds to an observational study. 

We assume $Y,X,Z$ take values in $[M], [K], [Q]$ respectively. Here we use the shorthand $[M] := \{1,\dots,M\}$ and similarly for other integers.
When it is clear from context that $Z$ is fixed to $z \in [Q]$, we will often omit the conditioning in $P(\cdot \mid Z=z)$.
For $k \geq 1$, we use $\Delta^{k-1} \subset \mathbb{R}^k$ to denote the $(k-1)$-dimensional probability simplex. For a set $A$, we use $\overline{A}$ to denote its complement. We use $A \subseteq B$ (or $B \supseteq A$) to denote that $A$ is a subset of $B$; we use $A \subset B$ (or $B \supset A$) to denote that $A$ is a proper subset of $B$. 
We use `$=_{d}$' to denote equality in distribution or conditional distribution. We use $\delta_x$ to denote a point mass at $x$. 

\vspace{-0.5cm}\subsection{Assumptions}\label{sec:assumptions}\vspace{-0.3cm}

For all the models we consider, we will assume the existence of potential outcomes $Y(x=i,z=q)$ for $i\in [K]$, $q\in [Q]$, corresponding to the value of $Y$ for a randomly selected subject if the subject were to receive $Z=q$ and $X=i$ (possibly counter-to-fact). In addition, for certain models we also assume the existence of potential outcomes $X(z=q)$ for $q \in [Q]$, denoting the value of the treatment $X$ that a subject would receive had the subject been assigned to $Z=q$. We will often use the shorthand notation $Y(x_i,z_q) := Y(x=i,z=q)$ and 
$X(z_q) := X(z=q)$. The observed data and potential outcomes are related via the usual consistency relation: $Y=Y(X,Z)$; 
for models with $X(z)$ potential outcomes, we also have $X=X(Z)$. We also define $Y(x) := Y(x,Z)$ to be the potential outcome for $Y$ arising from an intervention on $X$ alone. We will also consider a latent variable formulation, which posits the existence of an unmeasured variable $U$ (with unknown state-space) that represents all variables giving rise to the confounding between $X$ and $Y$.

The instrumental variable model is based on an Exclusion assumption and an Independence assumption. Different forms of these have been considered in the literature (see, e.g., \citealp[\S5.1.2]{guo2021likelihood}):

\begin{assumption}[Versions of the Exclusion assumption]\label{assumption:exclusion} \hfill
\begin{enumerate}[label=(A\ref{assumption:indep}-*)]
    \item[(A\ref{assumption:exclusion}-1)] 
    Individual-level exclusion: \vspace{-0.3cm}
    \begin{equation} \label{eq:individual-exclusion}
    \hspace{-2em} Y(x_i,z)=Y(x_i, \tilde{z}) \, \text{ almost surely for all 
    $z,\tilde{z} \in [Q]$ and every $i \in [K]$}.
\end{equation}
    \item[(A\ref{assumption:exclusion}-2)\label{assump:joint} ] \vspace{-0.5cm} Joint stochastic exclusion: \vspace{-0.3cm}
    \begin{equation}\label{eq:joint-stoc-exclusion}
    \hspace{-2em} \left(Y(x_1,z), \dots ,Y(x_K,z)\right) =_{d} (Y(x_1,\tilde{z}), \dots, Y(x_K,\tilde{z})) \\
\;\text{ for all $z,\tilde{z} \in [Q]$}.
    \end{equation}
    
    \item[(A\ref{assumption:exclusion}-3)] \vspace{-0.5cm} Latent stochastic exclusion:  \vspace{-0.3cm}
\begin{equation}\label{eq:latent-exclusion}
    Y(x,z) \mid U \;\;\, =_{d} \;\;\, Y(x,\tilde{z}) \mid U \;\;\;\;\;\text{ for all $z,\tilde{z} \in [Q]$ and every $x \in [K]$}.
\end{equation}
\end{enumerate}
\end{assumption}

\vspace{-0.5cm}
The strongest version (A\ref{assumption:exclusion}-1) requires that there is no direct effect of $Z$ on $Y$ relative to $X$ at the individual level.
The weaker versions (A\ref{assumption:exclusion}-2) and (A\ref{assumption:exclusion}-3) restrict the effect of $Z$ on $Y$ relative to $X$ at the population level. Specifically, version (A\ref{assumption:exclusion}-3) means that the direct effect of $Z$ on $Y$ holding $X$ and a latent variable $U$ fixed is zero at the population level.
The joint stochastic exclusion assumption (A\ref{assumption:exclusion}-2) generalizes a condition given in \citet{swansonreview}; 
\cite{hirano2000} also consider a related stochastic exclusion assumption.

Different independence assumptions have also been considered; see \citet{swansonreview} for a review focused on the binary IV model. We consider the following versions:
\begin{assumption}[Versions of the Independence assumption]\label{assumption:indep} 
\hfill
\begin{enumerate}[label=(A\ref{assumption:indep}-*)]
    \item[(A\ref{assumption:indep}-1)] \vspace{-0.5cm} Random assignment: \vspace{-0.3cm}
    \begin{equation}
    Z\ind \left\{Y(x,z),X(z):\, x\in [K], z\in [Q] \right\}.
    \label{eq:random-assignment}
    \end{equation}
    \item[(A\ref{assumption:indep}-2)] \vspace{-0.5cm} Joint independence: \vspace{-0.3cm}
    \begin{equation}
    Z\ind \left\{Y(x,z): \, x\in [K], z\in [Q] \right\}.
    \label{eq:joint-indep}
    \end{equation}
    \item[(A\ref{assumption:indep}-3)] \vspace{-0.5cm} {Single-world independence}: \vspace{-0.3cm}
    \begin{equation}
\text{ for all $z\in [Q]$, $x\in [K]$,}\quad    Z\ind X(z),\, Y(x,z).
    \label{eq:single-world}
    \end{equation}
    
    \item[(A\ref{assumption:indep}-4)] \vspace{-0.5cm} Latent-variable independence: there exists $U$ such that   \vspace{-0.3cm}
    \begin{equation}    \label{eq:latent-exogeneity}
U \ind Z \;\text{ and for all $z\in [Q]$, $x\in [K]$,}
\;\; Y(x,z)\ind X,Z \mid U.
    \end{equation}
\end{enumerate}
\end{assumption}

\vspace{-0.5cm} In the binary setting where $Q=K=M=2$ the Balke--Pearl bounds were derived under (A\ref{assumption:exclusion}-1) and (A\ref{assumption:indep}-1) but are shown to also hold under the weaker independence assumptions (A\ref{assumption:indep}-2), (A\ref{assumption:indep}-3), and (A\ref{assumption:indep}-4); see \cite{RichardsonRobins}. \cite{kitagawa2021} analyzed the IV model under (A\ref{assumption:indep}-2). Other works including \cite{dawid2003} formulated the IV model with the presence of an unmeasured confounder $U$ between $X$ and $Y$ as defined in (A\ref{assumption:indep}-4). \cite{RichardsonRobins} developed a sharp characterization of the joint counterfactual probability distribution $P(Y(x_1), Y(x_2))$ given an observed conditional probability $P(X,Y\mid Z)$, which holds under any of the independence conditions (A\ref{assumption:indep}-1)--(A\ref{assumption:indep}-4). 

 \begin{figure}[!htb]
 		\centering
 		\begin{tikzpicture}
 			\node (M1) at (0,0) {${\calm}_1$};
 			\node (M2) at (-1,-1) {${\calm}_2$};
 			\node (M3) at (-2, -2) {${\calm}_3$};
                \node (M4) at (1, -1) {${\calm}_4$};
 			\node (M5) at (0, -2) {${\calm}_5$};      
                \node at ($(M1)!0.5!(M2)$) [rotate=45] {$\supset$};
                \node at ($(M1)!0.5!(M4)$) [rotate=-45] {$\subset$};
                \node at ($(M2)!0.5!(M3)$) [rotate=45] {$\supset$};
                \node at ($(M2)!0.5!(M5)$) [rotate=-45] {$\subset$};
 		\end{tikzpicture} 
   \centering
    \caption{Nested structure between models ${\calm}_1$--${\calm}_5$.}
    \label{fig:model}
 \end{figure}
 
\begin{table}[tb]
\caption{Instrumental variable models considered in this paper}\label{tab:models}
\begin{tabular}{llcc}
& {\bf Model Name} & {\bf Exclusion} & {\bf Independence}\\
\hline
${\calm}_1$ & {\it Randomization}   & Individual-level & Random assignment\\
${\calm}_2$ & {\it Joint Ind. \& Indiv.~Excl.} & Individual-level & Joint independence\\
${\calm}_3$ & {\it Joint Ind. \& Stoch. Excl.} & Joint stochastic exclusion & Joint independence\\
${\calm}_4$ & {\it SWIG}  & Individual-level & Single-world independence\\
${\calm}_5$ & {\it Latent Model} & Latent stochastic exclusion & Latent-variable independence\\
\end{tabular}
\end{table}

\vspace{-0.5cm} In this paper, we consider five models $\calm_1,\dots, \calm_5$ corresponding to five different combinations of the exclusion and independence assumptions as shown in \cref{tab:models}.\footnote{In the setting where
$M=K=2$, \cite{RichardsonRobins} consider the models 
$\calm_1$, $\calm_2$, $\calm_4$ and another model $\calm_5^*$
given by (A\ref{assumption:exclusion}-1) and
(A\ref{assumption:indep}-4). Since
$\calm_2\subset \calm_5^*\subset \calm_5$, our results also apply to this model.} \cref{fig:graphs} displays the graphical models corresponding to $\calm_1,\dots,\calm_5$. We describe the relationship among the models $\calm_1,\dots,\calm_5$ in \cref{fig:model} and \cref{lemma:models} below.

\begin{lemma}\label{lemma:models}
    We have $\calm_1 \subset \calm_2 \subset \calm_3$, $\calm_1 \subset \calm_4$
    and $\calm_2 \subset \calm_5$.
\end{lemma}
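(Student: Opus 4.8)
The plan is to read each model $\calm_i$ as the set of joint laws over the variables it actually posits---the instrument $Z$, the observed $X,Y$, the potential outcomes $\{Y(x,z)\}$, and, depending on the model, the treatment potential outcomes $\{X(z)\}$ (for $\calm_1,\calm_4$) or a latent $U$ (for $\calm_5$)---and to interpret a containment $\calm_i\subset\calm_j$ at the level of the common variables $(Z,X,Y,\{Y(x,z)\})$. Fixing this convention is the one genuinely conceptual point: when the smaller model posits \emph{more} auxiliary structure I will marginalize it out, and when the larger model posits \emph{more} I will exhibit an explicit construction. Once the convention is fixed, every containment collapses to a one-line implication between assumptions.

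For $\calm_1\subset\calm_2$, $\calm_1\subset\calm_4$ and $\calm_2\subset\calm_3$ the two models always share one of the two assumptions, so only the other needs comparison. Random assignment (A\ref{assumption:indep}-1) asserts independence of $Z$ from the whole collection $\{Y(x,z),X(z)\}$; marginalizing over $\{X(z)\}$ gives $Z\ind\{Y(x,z)\}$, which is joint independence (A\ref{assumption:indep}-2), so $\calm_1\subseteq\calm_2$; restricting the same statement to each pair $(X(z),Y(x,z))$ gives single-world independence (A\ref{assumption:indep}-3), so $\calm_1\subseteq\calm_4$ (here both models already carry $\{X(z)\}$, so no marginalization is needed). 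For $\calm_2\subseteq\calm_3$ I only need individual-level exclusion (A\ref{assumption:exclusion}-1) to imply joint stochastic exclusion (A\ref{assumption:exclusion}-2): if $Y(x_i,z)=Y(x_i,\tilde z)$ almost surely for every $i$, then the vectors $(Y(x_1,z),\dots,Y(x_K,z))$ and $(Y(x_1,\tilde z),\dots,Y(x_K,\tilde z))$ agree almost surely, hence in distribution.

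The only containment requiring real construction is $\calm_2\subset\calm_5$, since $\calm_5$ introduces a latent $U$. Given a law in $\calm_2$, I would set $U:=(Y(x,z))_{x\in[K],\,z\in[Q]}$, the full vector of potential outcomes. Then $U\ind Z$ is exactly joint independence (A\ref{assumption:indep}-2); each $Y(x,z)$ is a coordinate of $U$, hence a deterministic function of $U$, so $Y(x,z)\ind X,Z\mid U$ holds trivially, giving latent-variable independence (A\ref{assumption:indep}-4); and under individual-level exclusion $Y(x,z)=Y(x,\tilde z)$ almost surely, so conditionally on $U$ the two are the same constant and therefore equal in distribution, giving latent stochastic exclusion (A\ref{assumption:exclusion}-3). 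Thus every member of $\calm_2$ extends to a member of $\calm_5$.

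Finally, because the notation reserves $\subset$ for \emph{proper} inclusion, I would record that each containment is strict by exhibiting a small witness exploiting exactly the slack in the weakened assumption: a law with $z$-invariant joint distribution of $(Y(x_1,z),\dots,Y(x_K,z))$ but with $Y(x_1,1)\neq Y(x_1,2)$ on a positive-probability event lies in $\calm_3\setminus\calm_2$; a law satisfying single-world independence but under which $Z$ depends on the joint vector $(Y(x,1),\dots,Y(x,Q))$ separates $\calm_4$ from $\calm_1$; and a latent law in which each coordinate $Y(x,z)$ is independent of $Z$ given $U$ yet the full vector $\{Y(x,z)\}$ is marginally dependent on $Z$ separates $\calm_5$ from $\calm_2$. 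I do not expect any real obstacle here: the substance is entirely in the bookkeeping of which auxiliary variables each model carries, and in the single explicit choice $U=(Y(x,z))_{x,z}$; the remaining implications are immediate.
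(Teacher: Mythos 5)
Your containment arguments coincide with the paper's own proof: the same three observations (individual-level exclusion implies both joint stochastic and latent stochastic exclusion; random assignment implies both joint and single-world independence; joint independence implies latent-variable independence via the choice $U:=(Y(x,z))_{x\in[K],z\in[Q]}$), and your explicit bookkeeping of which auxiliary variables each model carries only makes precise what the paper leaves implicit and later formalizes via the map $\phi$ in \cref{sec:necessity}. Up to that point the proposal is correct and essentially identical to the paper's argument.

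The divergence is your final paragraph on properness, and it contains a genuine gap. You correctly note that the paper reserves $\subset$ for proper inclusion (a point its own proof silently ignores, since it only establishes $\subseteq$), and your witnesses for $\calm_3\setminus\calm_2$ and $\calm_5\setminus\calm_2$ can be made rigorous. But for the one pair you never address, $\calm_1$ versus $\calm_2$, no witness exists under the convention you fixed: projecting $\calm_1$ onto the common variables $(Z,X,\{Y(x,z)\})$, the two models are \emph{equal}, not properly nested. Indeed, given any law satisfying joint independence and individual-level exclusion, set $Q_z:=P(X,Y(x_1),\dots,Y(x_K)\mid Z=z)$; these conditionals share the $(Y(x_1),\dots,Y(x_K))$-margin by joint independence, so the product coupling of \cref{lemma:sepz} yields a joint law of $(X(z_1),\dots,X(z_Q),Y(x_1),\dots,Y(x_K))$ with these margins, and taking $Z$ independent of that vector with $X:=X(Z)$ produces a member of $\calm_1$ projecting back to the original law. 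So either that strictness claim must be dropped (reading the lemma's $\subset$ as $\subseteq$, which is all the paper itself proves), or the comparison for that pair must be made on the larger space including $\{X(z)\}$, where an XOR-type construction on $(X(z_1),X(z_2))$ against $Z$ does separate the models. A secondary slip: your witness for $\calm_4\setminus\calm_1$ has $Z$ depending on $(Y(x,1),\dots,Y(x,Q))$, the vector across instrument levels for a fixed $x$; under individual-level exclusion (which $\calm_4$ assumes) these coordinates are almost surely equal, so such dependence would already violate single-world independence. The witness should instead make $Z$ depend jointly on potential outcomes across \emph{treatment} levels, e.g.\ on $(Y(x_1),Y(x_2))$, or on $(X(z_1),\dots,X(z_Q))$, while remaining independent of each pair $(X(z),Y(x))$.
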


\vspace{-0.5cm}
\begin{proof}
First, observe that individual-level exclusion implies both joint stochastic exclusion and latent stochastic exclusion. Second, observe that random assignment implies both joint independence and single-world independence. Finally, observe that joint independence implies latent-variable independence as we can set the latent variable to $U:=(Y(x,z): x \in [K], z \in [Q])$. The result then follows from the definition of the models in \cref{tab:models}.
\end{proof}

{Our analysis applies to models $\calm_2$ and  $\calm_3$ that posit neither potential outcomes for $X$ nor latent variables. Yet, these models require stronger exclusion assumptions; see also \citet[\S5.3]{guo2021likelihood}.}
Weaker versions of the instrumental variable model based on marginal independence assumptions have been considered by many authors \citep{beresteanu,kitagawa2021,manski1990,robins:1989:ivbounds}. In general, these are strict supermodels and imply wider bounds on the ATE.


\begin{figure}[htbp]
\begin{center}
\begin{tikzpicture}[>=stealth, ultra thick, node distance=2cm, inner sep=1.5,
    pre/.style={->,>=stealth,ultra thick,blue,line width = 1.2pt}]
\begin{scope}
\node[thick, name=z,shape=circle,style={draw}]
{$Z$
};
\node[above =1.6cm of z.center]{(a) ${\cal M}_1$};
\node[thick,name=x,shape=circle,style={draw}, right=0.8cm of z] {
$X$
};
\node[thick, name=y,shape=circle,style={draw},right= 2.2cm of x]
{$Y$
};
\node[thick, name=xcounter,shape=ellipse,style={draw},above=0.4cm of z, inner sep=1pt,xshift=2mm]
{\small $\{X(z)\}_{{z\in [Q]}}$
};
\node[thick, name=ycounter,shape=ellipse,style={draw},above=0.4cm of y, inner sep=1pt]
{\small $\{Y(x,\!z)\}_{\tiny x \in [K],z\in [Q]}$
};
\draw[pre,->,double,thick] (x) to (y);
\draw[pre,<->] (xcounter) to[out=30,in=150] (ycounter);
\draw[pre,->,double, thick] (xcounter) to (x);
\draw[pre,->,double, thick] (ycounter) to (y);
\draw[pre,->,double, thick] (z) to (x);
\end{scope}
\begin{scope}[yshift=-2.7cm]
\node[thick, name=z,shape=circle,style={draw}]
{$Z$
};
\node[above =1.4cm of z.center]{(b) ${\cal M}_2$};
\node[thick,name=x,shape=circle,style={draw}, right=0.8cm of z] {
$X$
};
\node[thick, name=y,shape=circle,style={draw},right= 2.2cm of x]
{$Y$
};
\node[thick, name=u,shape=ellipse,style={draw},above=0.4cm of z, xshift=14pt, inner sep=1.5pt]
{$U^*$
};
\node[thick, name=ycounter,shape=ellipse,style={draw},above=0.4cm of y, inner sep=1pt]
{\small $\{Y(x,\!z)\}_{\tiny x \in [K],z\in [Q]}$
};
\draw[pre,->,double,thick] (x) to (y);
\draw[pre,->] (u) to (x);
\draw[pre,->] (u) to (z);
\draw[pre,->] (ycounter.200) to (x.45);
\draw[pre,->,double, thick] (ycounter) to (y);
\draw[pre,->] (z) to (x);
\end{scope}
\begin{scope}[yshift=-5.5cm]
\node[thick, name=z,shape=circle,style={draw}]
{$Z$
};
\node[above =1.5cm of z.center]{(c) ${\cal M}_3$};
\node[thick,name=x,shape=circle,style={draw}, right=0.8cm of z] {
$X$
};
\node[thick, name=y,shape=circle,style={draw},right= 2.2cm of x]
{$Y$
};
\node[thick, name=ustar,shape=ellipse,style={draw},above=0.4cm of z, xshift=14pt, inner sep=1.5pt]
{$U^*$
};
\node[thick, name=ycounter,shape=ellipse,style={draw},above=0.4cm of y, inner sep=1pt]
{\small $\{Y(x,z)\}_{\tiny x \in [K], z\in[Q]}$
};
\node[below=0.1cm of x,xshift=1.5cm]{ \small \& $P\left( \{Y(x,z)\}_{\tiny x \in [K] }\right) =  P\left( \{Y(x,z^*)\}_{\tiny x \in [K]} \right)$};
\draw[pre,->] (x) to (y);
\draw[pre,->] (ustar) to (x);
\draw[pre,->] (ustar) to (z);
\draw[pre,->] (ycounter.200) to (x.45);
\draw[pre,->] (ycounter) to (y);
\draw[pre,->] (z) to (x);
\end{scope}
\begin{scope}[xshift=8cm,yshift=0.5cm]
\begin{scope}
			\tikzset{line width=0.9pt, inner sep=1.8pt, swig vsplit={gap=4pt, inner line width right=0.3pt}}	
				\node[xshift=0.0cm, yshift=0.0cm, name=z, shape=swig vsplit]{
        					\nodepart{left}{$Z$}
        					\nodepart{right}{$\color{red}{z}$} };
		\end{scope}
\node[above =1.0cm of z.center]{(d) ${\cal M}_4$};
\begin{scope}
			\tikzset{line width=0.9pt, inner sep=1.8pt, swig vsplit={gap=4pt, inner line width right=0.3pt}}	
				\node[xshift=0.0cm, yshift=0.0cm, name=x, shape=swig vsplit, right=0.8cm of z]{
        					\nodepart{left}{$X(z)$}
        					\nodepart{right}{$\color{red}{x}$} };
		\end{scope}
\node[thick, name=y,shape=ellipse,style={draw},right= 1.2cm of x]
{$Y(x)$
};
\draw[pre,->] (x) to (y);
\draw[pre,<->] (x) to[out=100,in=100, looseness=0.9] (y);
\draw[pre,->] (z) to (x);
\end{scope}
%
%
\begin{scope}[xshift=8cm,yshift=-3.3cm]
\begin{scope}
			\tikzset{line width=0.9pt, inner sep=1.8pt, swig vsplit={gap=4pt, inner line width right=0.3pt}}	
				\node[xshift=0.0cm, yshift=0.0cm, name=z, shape=swig vsplit]{
        					\nodepart{left}{$Z$}
        					\nodepart{right}{$\color{red}{z}$} };
		\end{scope}
\node[above =2cm of z.center]{(e) ${\cal M}_5$};
\begin{scope}
			\tikzset{line width=0.9pt, inner sep=1.8pt, swig vsplit={gap=4pt, inner line width right=0.3pt}}	
				\node[xshift=0.0cm, yshift=0.0cm, name=x, shape=swig vsplit, right=0.8cm of z]{
        					\nodepart{left}{$X(z)$}
        					\nodepart{right}{$\color{red}{x}$} };
		\end{scope}
\node[thick, name=y,shape=ellipse,style={draw},right= 1cm of x]
{$Y(x,z)$
};
\node[name=ca,right =0.4cm of x]{};
\node[thick,name=u,above=1.3cm of ca,shape=circle,style={draw}] {
$U$
};
\node[thick,name=ustar,above right =1.2cm and 0.3cm of z,shape=ellipse,style={draw}] {
$U^*$
};
\draw[pre,->,thick] (x) to (y);
\draw[pre,->] (u) to[out=260, in=85, looseness=0.9] (x);
\draw[pre,->] (ustar) to (x);
\draw[pre,->] (ustar) to[out=260,in=100, looseness=0.9] (z);
\draw[pre,->] (u) to (y);
\draw[pre,->, thick] (z) to (x);
\end{scope}
\end{tikzpicture}
\end{center}
\caption{
Graphical representations of independence and exclusion assumptions discussed in Section~\ref{sec:assumptions}.  $\calm_1$ and $\calm_4$ do not have confounding between $Z$ and $X$ and independence is encoded using the extension of d-separation  to acyclic graphs with bi-directed ($\leftrightarrow$) edges \citep{richardson-admg:2003}; $\calm_2$,
$\calm_3$ and $\calm_5$ allow confounding between $Z$ and $X$ and their independence assumptions follow
from Pearl's d-separation for directed acyclic graphs.  (Note that {\rm (e)} encodes a
slightly stronger version of (A\ref{assumption:indep}-4) with $X(z)$ replacing $X$.)
In {\rm (a)} and {\rm (b)}, when a variable is connected to its parents with double edges ($\Rightarrow$), the variable is a deterministic function of its parents.
The individual exclusion assumption
(A\ref{assumption:exclusion}-1) in  $\calm_1$ and $\calm_2$
follows because $Y$ is determined by $\{Y(x,z)\}$ and $X$ (and not $Z$);  The joint stochastic exclusion assumption  
(A\ref{assumption:exclusion}-2) in $\calm_3$ cannot be (easily) represented graphically and is stated explicitly; individual exclusion in $M_4$ is implied because the SWIG contains $Y(x)$ rather than $Y(x,z)$; the  latent stochastic exclusion assumption in $\calm_5$  is signified by the absence of an edge from $z$ to $Y(x,z)$; indeed, it holds that $z$ is d-separated from $Y(x,z)$ given $U$ \citep{malinsky19b,RichardsonRobins-2023}.}
\label{fig:graphs}
\end{figure}


\vspace{-0.6cm} \section{Main results}\label{sec:main-res}\vspace{-0.3cm}

We characterize the joint counterfactual distribution for the potential outcomes of $Y$. 

\begin{theorem}\label{theorem:main-result}
Under each of the models ${\calm}_1, \dots, {\calm}_5$, the relationship between the observed distribution 
$P(X,Y\,|\, Z)$ and the 
joint counterfactual probability distribution $P'(Y(x_1),\dots, Y(x_K))$ is characterized by the same set of inequalities: 
\begin{equation} \label{theo:1cont}
          P'\left(Y(x_1) \in {\calv}^{(1)},\ldots,Y(x_K)\in {\calv}^{(K)}\right)\;\leq\; \sum_{i=1}^K P\left(\left.X\!=\!i, Y\in {\calv}^{(i)}\right| Z\!=\!z\right), \; z \in [Q],
\end{equation}
where ${\calv}^{(k)}$ is a non-empty subset of $[M]$ for every $k\in [K]$ and a strict subset of $[M]$ for at least one ${k}$. There are $Q((2^M-1)^K-1)$ such inequalities. 
\end{theorem}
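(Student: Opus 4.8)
The plan is to fix the observed conditional law $P(X,Y\mid Z)$ and show that, for each model, the set of joint counterfactual laws $P'(Y(x_1),\dots,Y(x_K))$ arising as the corresponding marginal of some full probabilistic model realizing the assumptions and inducing $P(X,Y\mid Z)$ is exactly the polytope $\mathcal{P}$ cut out by the displayed inequalities. By \cref{lemma:models} the five models are nested, with $\calm_1$ minimal and $\calm_3,\calm_4,\calm_5$ the maximal elements, and compatibility is monotone under inclusion (a larger model admits at least as many compatible $P'$). Hence it suffices to prove two things: \emph{necessity}, that every $P'$ compatible with one of the maximal models $\calm_3,\calm_4,\calm_5$ satisfies the inequalities, and \emph{sufficiency}, that every $P'$ satisfying the inequalities is compatible with $\calm_1$. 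Chaining $\mathcal{P}\subseteq\mathrm{image}(\calm_1)\subseteq\mathrm{image}(\calm_j)\subseteq\mathcal{P}$ then forces all five images to coincide with $\mathcal{P}$.

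For necessity I would argue directly in each maximal model. Fix $z$ and a box $\calv^{(1)}\times\cdots\times\calv^{(K)}$, partition the event $\{Y(x_1)\in\calv^{(1)},\dots,Y(x_K)\in\calv^{(K)}\}$ by the value of the treatment actually taken, and apply consistency: on $\{X=i\}$ one has $Y=Y(x_i)$, so membership of the whole vector in the box forces $Y\in\calv^{(i)}$, giving $\{\text{vector in box}\}\cap\{X=i\}\subseteq\{X=i,\,Y\in\calv^{(i)}\}$. The relevant independence assumption is used to pass between the conditional-on-$Z{=}z$ and the unconditional probabilities so that the left side becomes $P'(\text{box})$: joint independence for $\calm_3$, single-world independence (for the matching intervention $x=i$) for $\calm_4$, and conditioning on $U$ with latent independence for $\calm_5$; in each case the associated exclusion assumption identifies the vector's law with the common $P'$. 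Summing over $i$ delivers the inequality.

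For sufficiency I would realize the inequalities as the feasibility conditions of Strassen's theorem. For fixed $z$, seek a coupling of $(Y(x_1),\dots,Y(x_K))\sim P'$ with $(X,Y)\sim P(\cdot\mid Z{=}z)$ supported on the consistency relation $R=\{(\mathbf{y},(i,m)): y_i=m\}$. Strassen's theorem says such a coupling exists iff $P((X,Y)\in B\mid z)\le P'(R^{-1}(B))$ for every $B\subseteq[K]\times[M]$. The crux is a complementation bijection: writing $\calv^{(i)}=\{m:(i,m)\notin B\}$ gives $R^{-1}(B)^{\complement}=\calv^{(1)}\times\cdots\times\calv^{(K)}$ and $P((X,Y)\notin B\mid z)=\sum_i P(X{=}i,\,Y\in\calv^{(i)}\mid z)$, so each Strassen condition is \emph{exactly} one displayed box inequality. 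Feasibility for every $z$ thus holds by hypothesis, producing couplings $\pi_z$ sharing the marginal $P'$; gluing them by declaring $X(1),\dots,X(Q)$ conditionally independent given $(Y(x_1),\dots,Y(x_K))$ and appending $Z$ independently yields a single joint law satisfying individual exclusion and random assignment, reproducing $P(X,Y\mid Z)$ by construction and having the prescribed $(Y(x_1),\dots,Y(x_K))$-marginal $P'$ — the required witness in $\calm_1$.

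The count then follows from the same bijection: trivial boxes are exactly those with some empty side (left side $0$) or all sides equal to $[M]$ (the tautology $1\le 1$, i.e.\ $B=\varnothing$), and these give vacuous Strassen conditions, so only boxes with every $\calv^{(k)}$ nonempty and at least one $\calv^{(k)}$ proper need be imposed; there are $(2^M-1)^K-1$ of these per arm and $Q$ arms. I expect the main obstacle to be the correct invocation of Strassen's theorem in the sufficiency step — pinning down $R$, getting the direction of the marginal inequality right, and carrying out the complementation that matches it to the stated box form — together with verifying that the per-arm couplings can be glued into one joint law without disturbing the fixed marginal $P'$ or the observed distribution.
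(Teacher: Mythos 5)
Your overall architecture coincides with the paper's: necessity argued for the maximal models $\calm_3,\calm_4,\calm_5$, sufficiency for the minimal model $\calm_1$ via Strassen's theorem applied arm by arm, per-arm couplings glued through conditional independence given the counterfactual vector (exactly \cref{lemma:sepz}), the five images identified by chaining through \cref{lemma:models}, and the count handled correctly. Two points deserve comment, one a gap, one a genuine (valid) difference.

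The gap is in your necessity recipe for $\calm_4$ and $\calm_5$. You partition the event $\{(Y(x_1),\dots,Y(x_K))\in\calv\}$, $\calv:=\calv^{(1)}\times\cdots\times\calv^{(K)}$, under $P(\cdot\mid Z=z)$, bound each piece by consistency, and then invoke the independence assumption so that ``the left side becomes $P'(\calv)$''. That conversion requires $Z\ind (Y(x_1),\dots,Y(x_K))$ \emph{jointly}, which holds under $\calm_3$ (\cref{lem:joint-ind}) but is precisely what single-world independence \eqref{eq:single-world} and latent-variable independence \eqref{eq:latent-exogeneity} do not give: they constrain only the pair $(X(z),Y(x,z))$ for each fixed $x$, respectively $Y(x,z)$ given $U$. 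Concretely, with $K=M=2$, take $X(1)=X(2)\equiv 1$, let $(Y(x_1),Y(x_2))$ be uniform on $\{1,2\}^2$ given $Z=1$ and uniform on $\{(1,1),(2,2)\}$ given $Z=2$, with $Z$ uniform: all single-world independencies and exclusion hold, so this lies in $\calm_4$, yet $P(Y(x_1)=1,Y(x_2)=1\mid Z=z)$ equals $1/4$ or $1/2$ depending on $z$, and neither equals $P'(Y(x_1)=1,Y(x_2)=1)=3/8$. The repair is to run the chain in the other direction, as in \cref{apx:proof-theo1}: start from the observed sum, apply consistency, strip the conditioning on $Z=z$ \emph{term by term} using the independence for the matching $i$ (for $\calm_5$, inside each $U=u$ stratum), and only then apply monotonicity to the resulting unconditional (or $U$-conditional) probabilities; summing over $i$ then recovers $P'(\calv)$, using $\sum_i P(X(z)=i,\cdot)$ collapsing to the box probability for $\calm_4$, respectively $\sum_i P(X=i\mid U=u,Z=z)=1$ together with $U\ind Z$ for $\calm_5$. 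Your parenthetical ``for the matching intervention $x=i$'' suggests you saw the per-term use, but the sequence of steps you wrote does not implement it.

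Your sufficiency step, in contrast, is correct and takes a genuinely different route from \cref{sec:sufficiency}. The paper applies \cref{thm:strassen} with the counterfactual space $[M]^K$ as the enumerated side, getting one condition per subset $\mathcal{U}\subseteq[M]^K$, and must then argue that only Cartesian-form $\mathcal{U}$ matter (the product of the coordinate projections maximizes the left side among sets sharing a right side). You instead enumerate subsets $B$ of the observed space $[K]\times[M]$ (i.e., apply Strassen with the relation transposed, which is legitimate since existence of a coupling supported on a relation is symmetric in the two marginals), and your complementation $\calv^{(i)}=\{m:(i,m)\notin B\}$ turns each Strassen condition \emph{exactly} into one inequality of the form \eqref{theo:1cont}, bijectively, with no reduction step. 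This duality is in fact what the paper proves later as \cref{lemma:reverse-ineq} for the redundancy analysis; deploying it already at the sufficiency stage buys a slightly cleaner derivation of exactly the stated inequalities.
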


The inequalities (\ref{theo:1cont}) are {\it necessary} in that they are implied by each of the models ${\calm}_1$, \ldots, ${\calm}_5$. The set of inequalities are also {\it sufficient}: given any counterfactual distribution
$P^\prime(Y(x_1),\dots, Y(x_K))$
and any observed distribution $P(X,Y\,|\, Z)$ obeying (\ref{theo:1cont}), there exists a joint distribution $\check{P}\big(Z,X, Y(x_1), \ldots, Y(x_K)\big)$
that has margins $P'$ and $P$ 
and is compatible with each of the models ${\calm}_1$, \ldots, ${\calm}_5$.

\Cref{theo:1cont} consists of $Q((2^M-1)^K-1)$ inequalities: here $2^M-1$ counts the non-empty subsets of $[M]$; the second `$-1$' arises from the requirement that at least one ${\calv}^{(k)}$
be a strict subset (otherwise the inequality becomes trivial, since both sides are $1$). We further note that both the left- and right-hand sides of all bounds in the form of (\ref{theo:1cont}) are linear summations of $P'(Y(x_1)=y^{1},\dots, Y(x_K)=y^{K})$ and $P(Y=y, X=x\mid Z=z)$. This makes the practical implementation of our bounds efficient. 

\begin{remark}
Even though \cref{theorem:main-result} is formulated as upper-bounding the joint counterfactual probabilities with observed probabilities, the set of inequalities implies both upper and lower bounds for any joint counterfactual probability by normalization of the probability measure.
\end{remark}

\begin{remark} \label{rem:falsify}
For a given observed distribution $P(X,Y\,|\,Z)$, the inequalities \eqref{theo:1cont} describe a polytope for $P'$, which is the set of counterfactual distributions compatible with $P$ under any of the IV models we consider. If this set of $P'$ is empty, then $P$ must lie outside the models $\calm_1, \dots, \calm_5$ and hence the IV is falsified. If desired, using Fourier--Motzkin to eliminate $P'$ from \eqref{theo:1cont}, one can obtain a set of inequalities on $P$ that directly describe the set of observed distributions compatible with the IV models, which generalize the instrumental inequalities in \citet{BPbounds,bonet,KedagniMourifie}; alternatively, one can check feasibility computationally, which we discuss in Supplemental Section~\ref{sec:feasibility}. In \cref{sec:inference}, we will describe an inference algorithm that incorporates model falsification test without explicitly requiring these instrumental inequalities. 
\end{remark}

When we specialize \eqref{theo:1cont} by setting $\calv^{(i)}$ to be either $[M]$ or a singleton $\{j_{(i)}\}$ for every $i \in [K]$, we obtain the following upper bounds on marginal counterfactual probabilities. 

\begin{corollary}\label{coro:marginal}
  The following inequalities follow from \eqref{theo:1cont}:
{\small \begin{align*} 
P'(Y(x_i)=j) & \leq 1-P(X=i, Y\neq j\mid Z=z),\\
P'\left(Y(x_i)=j, Y(x_{i'})=j'\right) &\leq 1-P(X=i, Y\neq j\mid Z=z)-P\left(\left. X=i', Y\neq j'\right| Z=z\right),\\[-4pt]
& \;\;\vdots \\[-4pt]
P'\left( Y(x_{i_{(1)}})=j_{(1)}, \ldots , Y(x_{i_{(k)}})=j_{(k)}\right) & \leq 1-P\left(\left.X=i_{(1)}, Y\neq j_{(1)}\right| Z=z\right)-\cdots \nonumber \\[-6pt]
        & \kern50pt \qquad \quad - P\left(\left.X=i_{(k)}, Y\neq j_{(k)}\right| Z=z\right),\\[-16pt]
& \;\;\vdots \\[-4pt]
P'\left( Y(x_{i_{(1)}})=j_{(1)}, \ldots , Y(x_{i_{(K)}})=j_{(K)}\right) & \leq 1-P\left(\left.X=i_{(1)}, Y\neq j_{(1)}\right| Z=z\right)-\cdots \nonumber \\[-6pt]
        & \kern50pt \qquad \quad - P\left(\left.X=i_{(K)}, Y\neq j_{(K)}\right| Z=z\right),
\end{align*}}
where $z \in [Q]$, $1 \leq i_{(1)} < \dots < i_{(k)} \leq K$, and 
         $(j_{(1)},\dots, j_{(k)}) \in [M]^k$.
\end{corollary}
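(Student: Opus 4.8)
The plan is to obtain every inequality as a direct specialization of \eqref{theo:1cont}. Fix $z \in [Q]$, indices $1 \leq i_{(1)} < \dots < i_{(k)} \leq K$, and values $(j_{(1)}, \dots, j_{(k)}) \in [M]^k$. In \eqref{theo:1cont} I would take $\calv^{(i_{(\ell)})} = \{j_{(\ell)}\}$ for each $\ell \in [k]$, and $\calv^{(i)} = [M]$ for every remaining index $i \notin \{i_{(1)}, \dots, i_{(k)}\}$. Before applying the theorem I would check admissibility: each $\calv^{(i)}$ is a non-empty subset of $[M]$, and since $M \geq 2$ and $k \geq 1$, the singleton $\calv^{(i_{(1)})} = \{j_{(1)}\}$ is a proper subset of $[M]$, so the requirement that at least one $\calv^{(k)}$ be a strict subset is met.

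Next I would simplify both sides of the resulting instance of \eqref{theo:1cont}. On the left, the conjuncts $Y(x_i) \in [M]$ attached to the unchosen indices are vacuously true, so the left-hand side collapses to $P'(Y(x_{i_{(1)}}) = j_{(1)}, \dots, Y(x_{i_{(k)}}) = j_{(k)})$, matching the statement. On the right, an unchosen index contributes $P(X = i, Y \in [M] \mid Z = z) = P(X = i \mid Z = z)$, while a chosen index $i_{(\ell)}$ contributes $P(X = i_{(\ell)}, Y = j_{(\ell)} \mid Z = z)$, so the right-hand side equals
\[
\sum_{\ell=1}^k P(X = i_{(\ell)}, Y = j_{(\ell)} \mid Z = z) + \sum_{i \notin \{i_{(1)}, \dots, i_{(k)}\}} P(X = i \mid Z = z).
\]

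The remaining step is purely algebraic and uniform in $z$. Using $\sum_{i=1}^K P(X = i \mid Z = z) = 1$, the second sum above rewrites as $1 - \sum_{\ell=1}^k P(X = i_{(\ell)} \mid Z = z)$; combining this with the first sum through the identity $P(X = i_{(\ell)}, Y = j_{(\ell)} \mid Z = z) - P(X = i_{(\ell)} \mid Z = z) = -P(X = i_{(\ell)}, Y \neq j_{(\ell)} \mid Z = z)$ collapses the right-hand side to $1 - \sum_{\ell=1}^k P(X = i_{(\ell)}, Y \neq j_{(\ell)} \mid Z = z)$, which is exactly the bound claimed. Letting $k$ range over $1, \dots, K$ recovers each displayed line in turn. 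There is no real obstacle, since the corollary is a specialization of \cref{theorem:main-result}; the only points demanding care are the bookkeeping ones, namely verifying the admissibility of the chosen $\calv^{(i)}$ (the strict-subset condition holds precisely because each bound retains at least one singleton) and applying the complementation identity consistently for every $z \in [Q]$.
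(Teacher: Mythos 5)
Your proposal is correct and follows exactly the paper's route: the text preceding \cref{coro:marginal} derives it by specializing \eqref{theo:1cont} with each $\calv^{(i)}$ set to either a singleton $\{j_{(i)}\}$ or $[M]$, which is precisely your choice, and your complementation algebra $P(X=i_{(\ell)}, Y=j_{(\ell)}\mid Z=z) - P(X=i_{(\ell)}\mid Z=z) = -P(X=i_{(\ell)}, Y\neq j_{(\ell)}\mid Z=z)$ correctly collapses the right-hand side to the stated form. The admissibility check (non-emptiness and the strict-subset requirement, guaranteed by $M\geq 2$ and the retained singleton) is a worthwhile addition that the paper leaves implicit.
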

         
In the special case of $M=2$ (binary $Y$), these are exactly the same inequalities as in \cref{theo:1cont}. When $M>2$, there are additional inequalities in \cref{theo:1cont} which are not bounds on the marginalized counterfactual probabilities.

For any given instrument arm $z$, the set\footnote{Here, `set' implies no two inequalities are identical.} of inequalities (\ref{theo:1cont}) specified in \cref{theorem:main-result} define a finite polytope over the pairs $(P'(Y(x_1), \dots, Y(x_k)), P(X,Y \mid Z=z))$ of counterfactual and observed distributions. Relative to a given set of inequalities, we call an individual inequality \emph{redundant} if it is implied by the remaining inequalities in the set. A set of inequalities is called \emph{non-redundant} if no individual inequality is redundant relative to the set. By a basic result on finite polytopes \citep[Theorem 2.15]{polytope-book}, an inequality is not redundant if and only if the half-space defined by the inequality is facet-defining, i.e., the inequality can hold with equality for some point in the polytope and when the inequality holds with equality, the resulting hyperplane corresponds to a facet (a face with maximum dimension) of the polytope. Obtaining a non-redundant set of inequalities to characterize the polytope is essential for reducing the complexity in describing the model and computing partial identification bounds. For example, under regularity conditions, interior-point methods for convex optimization achieve time complexity that is polynomial in the number of inequalities \citep{nesterov1994interior}.

\begin{theorem}\label{theo:redundancy}
The set of inequalities \eqref{theo:1cont} can be reduced to a subset that only consists of inequalities that satisfy either \vspace{-0.3cm}
    \begin{enumerate}
        \item \label{cond:redundancy-1} for at least two values $k \neq k^*$, we have ${\calv}^{(k)}\neq [M]$ and ${\calv}^{(k^*)}\neq [M]$,
        \vspace{-0.3cm}
        
or \vspace{-0.3cm}

        \item  \label{cond:redundancy-2} there exist $k^{\ast}$ and $m \in [M]$ such that ${\calv}^{(k^{\ast})}=[M]\setminus\{m\}$ and ${\calv}^{(k)}=[M]$ for every $k \neq k^{\ast}$.
    \end{enumerate}
This subset of inequalities are equivalent to \eqref{theo:1cont} and non-redundant. Compared to \eqref{theo:1cont}, this subset has $Q(K(2^M-M-2))$ fewer inequalities. 
\end{theorem}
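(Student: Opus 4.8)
The plan is to establish the two assertions separately: \emph{equivalence} (the retained subset implies every discarded inequality) and \emph{non-redundancy} (no retained inequality is implied by the others). First I would pin down exactly which inequalities are discarded. An inequality of \eqref{theo:1cont} fails both Condition~\ref{cond:redundancy-1} and Condition~\ref{cond:redundancy-2} precisely when it has a single non-trivial coordinate $k^{\ast}$ with $\calv^{(k^{\ast})}=S$, where $1\le |S|\le M-2$, and $\calv^{(k)}=[M]$ for all $k\ne k^{\ast}$. For each arm $z$ and each $k^{\ast}$ there are $(2^M-1)-1-M = 2^M-M-2$ such sets $S$ (non-empty, and missing at least two elements), giving $QK(2^M-M-2)$ discarded inequalities, matching the stated count.

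For equivalence, fix such a discarded inequality and abbreviate $\overline S = [M]\setminus S$, so $|\overline S|\ge 2$. Writing $p_{iy}=P(X=i,Y=y\mid Z=z)$, each retained Condition~\ref{cond:redundancy-2} inequality with the same $k^{\ast}$ and $\calv^{(k^{\ast})}=[M]\setminus\{m\}$ rearranges, using normalization of $P'$ and of $P(\cdot\mid Z=z)$, to the equivalent lower bound
\[
P'\!\left(Y(x_{k^{\ast}})=m\right)\;\ge\; p_{k^{\ast} m},\qquad m\in[M].
\]
Summing these over $m\in\overline S$ gives $P'(Y(x_{k^{\ast}})\in\overline S)\ge P(X=k^{\ast}, Y\in\overline S\mid Z=z)$, and taking complements recovers exactly the discarded inequality $P'(Y(x_{k^{\ast}})\in S)\le \sum_{i\ne k^{\ast}}P(X=i\mid Z=z)+P(X=k^{\ast},Y\in S\mid Z=z)$. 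Thus every discarded inequality is a non-negative combination of retained ones, so the two systems cut out the same polytope.

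For non-redundancy I would work arm-by-arm, exploiting that each inequality in \eqref{theo:1cont} involves $P'$ together with only the single arm-$z$ distribution $P(\cdot\mid Z=z)$; the $Q$ arms are coupled only through the shared $P'$. The key tool is a slack identity: for any coupling $\check P(X,Y(x_1),\dots,Y(x_K))$ consistent with the data in arm $z$, the slack of the inequality indexed by $(\calv^{(1)},\dots,\calv^{(K)})$ equals
\[
\check P\!\left(Y(x_X)\in\calv^{(X)}\right)-\check P\!\left(\forall k:\,Y(x_k)\in\calv^{(k)}\right)\;=\;\check P\!\left(Y(x_X)\in\calv^{(X)},\ \exists k:\,Y(x_k)\notin\calv^{(k)}\right)\ge 0,
\]
which re-derives necessity and shows the inequality is tight precisely when, $\check P$-almost surely, membership of the realized-treatment coordinate forces membership of every coordinate. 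For each retained inequality $H$ I would construct a coupling $\check P$ whose induced pair $(P',P(\cdot\mid Z=z))$ lies in the relative interior of $\Delta^{M^K-1}\times\Delta^{KM-1}$, makes $H$ tight, and leaves every other retained arm-$z$ inequality strict; the remaining arms are handled by choosing, via the sufficiency (Strassen) construction, interior conditionals $P(\cdot\mid Z=z')$ compatible with this $P'$ for which all their inequalities are strict. Perturbing this interior point slightly along a simplex-tangent direction that increases the functional of $H$ then yields a pair violating $H$ alone, certifying that $H$ is not implied by the others; by \citet[Theorem 2.15]{polytope-book} this is equivalent to $H$ being facet-defining.

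The main obstacle is the explicit witness construction for the Condition~\ref{cond:redundancy-1} (genuinely multi-coordinate) inequalities, together with verifying that all other retained inequalities stay strict: the tightness characterization above forces certain joint cells of $\check P$ to vanish, and one must check this can be arranged while keeping every other slack --- over both \ref{cond:redundancy-1}- and \ref{cond:redundancy-2}-type indices --- strictly positive and all marginal cells of $(P',P)$ positive. I expect this to reduce to choosing a $\check P$ that concentrates just enough mass to saturate $H$ and spreads the rest generically, with the constraint $|S|\le M-2$ (equivalently, at least two non-trivial coordinates, or a single missing element) being exactly what guarantees such a saturating coupling exists without being forced to saturate any companion inequality.
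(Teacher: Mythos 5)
Your equivalence half is correct and complete: the count of discarded inequalities ($QK(2^M-M-2)$, one for each single non-trivial coordinate $\calv^{(k^\ast)}=S$ with $1\le|S|\le M-2$) is right, and your derivation --- rearranging each retained Condition~2 inequality into the lower bound $P'(Y(x_{k^\ast})=m)\ge P(X=k^\ast,Y=m\mid Z=z)$, summing over $m\in[M]\setminus S$, and complementing --- is exactly the mechanism the paper illustrates in its worked example after \cref{theo:redundancy}. Indeed this settles equivalence more directly and elementarily than the paper, which obtains redundancy of the discarded inequalities as case (III) of a general edge-set-containment criterion (\cref{prop:redundancy}).

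The genuine gap is in the non-redundancy half, which is the mathematical core of the theorem, and your proposal does not prove it. You lay out the right architecture (for each retained inequality $H$, exhibit a pair $(P',p_z)$ in the relative interior of the product of simplices at which $H$ is tight and every other retained inequality is strict, then perturb along a simplex-tangent direction), but the existence of such a witness is precisely what has to be established, and you explicitly defer it (``I expect this to reduce to \dots''). Your slack identity correctly characterizes tightness as $\check P\bigl(Y(x_X)\in\calv^{(X)},\ \exists k:\,Y(x_k)\notin\calv^{(k)}\bigr)=0$, but it does not by itself decide whether the zero-pattern forced by saturating $H$ can avoid saturating some other retained inequality; that is a delicate combinatorial question whose answer is yes exactly under Conditions~1 and~2 and no otherwise --- which is the content of the theorem, not something one can assume generically. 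The paper resolves it via \cref{prop:redundancy}: the extreme points of the face cut out by $H$ are exactly the point-mass pairs along the edge set $\Rc(B)$ of the coherence bipartite graph, so $H$ is redundant iff $\Rc(B)\subseteq\Rc(B')$ for some admissible $B'\neq B$; non-redundancy under Conditions~1 and~2 is then proved by the connectivity arguments of \cref{lemma:A-bar-multix,lemma:events-connection,lemma:graph-connection} (cases I and II of Appendix~\ref{apx:proof-theo-redun}), e.g., showing that any $B'$ whose edge set contained $\Rc(B)$ would have to contain $B\cup\overline{B}=\mathcal{B}$ and hence correspond to a trivial inequality. Nothing in your proposal substitutes for this analysis. (A minor wording slip: you call ``$|S|\le M-2$'' equivalent to ``at least two non-trivial coordinates, or a single missing element''; the parenthetical describes the \emph{retained} inequalities, i.e., the negation of $|S|\le M-2$.)
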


Under Condition 2 above, \cref{theo:1cont} becomes $P'(Y(x_k)\neq m)\leq 1-P(X=k, Y=m\mid Z=z)$. The inequalities that are redundant, i.e., those that satisfy neither Condition 1 nor 2, thus take the form \vspace{-0.5cm}
\[ P'(Y(x_k)\not\in\{m_1,\dots, m_J\})\leq 1-\sum_{j=1}^J P(X=k, Y=m_j\mid Z=z), \quad J\geq 2. \]

\vspace{-0.5cm}\begin{corollary}\label{cor:numbounds} 
The subset of inequalities specified in \cref{theo:redundancy} consists of 
\begin{equation} \label{eqs:numbounds}
r=Q \, ((2^M-1)^K-K(2^M-M-2)-1)
\end{equation}
inequalities. This subset of inequalities is necessary, sufficient, non-trivial, and non-redundant for characterizing the pairs of compatible observed and joint counterfactual distributions under each of the models ${\calm}_1, \dots, {\calm}_5$.
\end{corollary}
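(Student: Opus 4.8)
The plan is to obtain this corollary as a bookkeeping consequence of \cref{theorem:main-result} and \cref{theo:redundancy}, since those two results already supply all the substantive content; what remains is to (i)~verify the count $r$ and (ii)~confirm each of the four stated properties.

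For the count, I would argue directly rather than by subtraction, organizing the inequalities of \eqref{theo:1cont} according to how many of the sets $\calv^{(1)},\dots,\calv^{(K)}$ differ from $[M]$. Writing $2^M-2$ for the number of nonempty proper subsets of $[M]$, the inequalities split into three disjoint classes: (a)~all $\calv^{(k)}=[M]$, the single trivial inequality, which \eqref{theo:1cont} already excludes; (b)~exactly one $\calv^{(k)}\neq[M]$, of which there are $K(2^M-2)$; and (c)~at least two $\calv^{(k)}\neq[M]$, which is precisely Condition~1 of \cref{theo:redundancy} and contributes $(2^M-1)^K-1-K(2^M-2)$. Condition~2 retains, from class~(b), only those inequalities whose single proper subset has the form $[M]\setminus\{m\}$, of which there are $KM$. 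Adding the Condition~1 and Condition~2 tallies gives $(2^M-1)^K-1-K(2^M-2)+KM=(2^M-1)^K-K(2^M-M-2)-1$, and multiplying by $Q$ (one copy of the system per instrument arm) yields $r$. As a consistency check, the number discarded from class~(b) is $K(2^M-2)-KM=K(2^M-M-2)$ per arm, matching the reduction claimed in \cref{theo:redundancy}.

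For the four properties: \emph{necessity} is inherited immediately, since the retained inequalities form a subset of \eqref{theo:1cont}, every member of which is implied by each model $\calm_1,\dots,\calm_5$ by \cref{theorem:main-result}. \emph{Sufficiency} follows because \cref{theo:redundancy} asserts the retained subset is equivalent to the full system \eqref{theo:1cont}, which is sufficient by \cref{theorem:main-result}; hence any pair $(P',P)$ satisfying the subset also satisfies all of \eqref{theo:1cont} and therefore admits a compatible joint distribution $\check P$. \emph{Non-redundancy} is exactly the conclusion of \cref{theo:redundancy}. \emph{Non-triviality} holds because both Condition~1 and Condition~2 force at least one $\calv^{(k)}$ to be a proper subset of $[M]$, so no retained inequality collapses to the tautology $1\leq 1$; equivalently, non-redundancy already guarantees that each retained inequality is facet-defining, hence a genuine constraint.

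Because all the difficulty is concentrated in \cref{theorem:main-result} and \cref{theo:redundancy}, I do not anticipate a genuine mathematical obstacle here; the only real risk is an arithmetic slip in the case analysis, in particular keeping the Condition~2 tally at $KM$ (rather than, say, $K(M-1)$) and correctly identifying exactly which class-(b) inequalities are dropped.
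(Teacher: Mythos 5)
Your proposal is correct and matches the paper's (implicit) argument: the count follows from the same classification of inequalities underlying \cref{theorem:main-result} and \cref{theo:redundancy} --- your direct tally $(2^M-1)^K-1-K(2^M-2)+KM$ per arm is just the additive form of the paper's subtraction of $K(2^M-M-2)$ from $(2^M-1)^K-1$ --- and the four properties are inherited from those two theorems exactly as you state. No gap; the arithmetic checks out (e.g., $K=2, M=3$ gives $42$ per arm, agreeing with the paper).
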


\begin{remark}
In the case of $M=2$, the set of inequalities \eqref{theo:1cont} are non-redundant. This is because Condition 2 in \cref{theo:redundancy} is always satisfied since we know there is at least one $k^{\ast}$ such that $\calv^{(k^{\ast})}\neq\{1,2\}$ and $\calv^{(k^{\ast})}\neq \emptyset$. 
\end{remark}


\begin{example} Consider an IV model with
a binary treatment and ternary outcome, so 
$K=2$ and $M=3$. We first consider some non-redundant inequalities. Take ${\calv}^{(1)}=\{1,2,3\}$ and ${\calv}^{(2)}=\{1,2\}$. In this case \cref{theo:1cont}, namely 
\[ P'(Y(x_1)\in \calv^{(1)}, Y(x_2)\in \calv^{(2)}) \leq \sum_{i=1}^2 P\left(\left.X=i, Y\in \calv^{(i)}\right| Z=z\right), \]
becomes 
\begin{equation} \label{eq:nonredundantexp1}
P'(Y(x_2)\neq 3)\leq 1-P(X=2, Y=3\mid Z=z).
\end{equation}
This inequality is non-redundant because Condition 2 in \cref{theo:redundancy} is satisfied. 
Similarly, taking ${\calv}^{(1)}=\{1,2,3\}$ and ${\calv}^{(2)}=\{2,3\}$, gives 
\begin{equation} \label{eq:nonredundantexp2}
P'(Y(x_2)\neq 1)\leq 1-P(X=2, Y=1\mid Z=z),
\end{equation}
which is also non-redundant by \cref{theo:redundancy}.

In contrast, taking ${\calv}^{(1)}=\{1,2,3\}$ and ${\calv}^{(2)}=\{2\}$ gives the inequality 
\begin{equation} \label{eq:redundantexp}
P'(Y(x_2)=2)\leq 1-P(X=2, Y=1\mid Z=z)-P(X=2, Y=3\mid Z=z),
\end{equation}
which satisfies neither condition in \cref{theo:redundancy}. To see it is indeed redundant, note that \cref{eq:nonredundantexp1,eq:nonredundantexp2} can be rewritten as 
\begin{align*}
P'(Y(x_2)=3) &\geq P(X=2, Y=3\mid Z=z), \\
P'(Y(x_2)=1) &\geq P(X=2, Y=1\mid Z=z),
\end{align*}
which implies \cref{eq:redundantexp} by summing both sides. 

By enumerating all $({\calv}^{(1)}, {\calv}^{(2)})$ such that at least one of them is a strict subset of $\{1,2,3\}$, we can obtain the set of necessary, sufficient, and non-redundant inequalities that characterize $P'(Y(x_1), Y(x_2))$. By \cref{cor:numbounds}, the number of such inequalities is $42Q$.
\end{example}


Recall that each $\calm_i$ ($i=1,\dots,5$) is an IV model as specified in \cref{tab:models}. In what follows, we will overload the symbol $\calm_i$ to mean, specifically, the set of joint distributions $P(Z,X,Y(x_1),\dots, Y(x_K))$ under the model. 
We define 
\begin{equation} \label{eqs:phi}
\phi: P(Z,X,Y(x_1),\dots, Y(x_K))\mapsto \left(
\rule[-2pt]{0pt}{12pt}%
P(Y(x_1),\dots, Y(x_K)),\; P(X,Y\mid Z) \right),
\end{equation}
which maps the joint distribution of $Z$, $X$ and $Y$'s potential outcomes to the marginal distribution over the potential outcomes of $Y$ and the observed distribution of $X,Y$ given $Z$. The image of such a map is denoted by $\phi({\calm}_i)$. 
Let $\cal T$ denote the set of pairs of distributions $(P(Y(x_1),\dots, Y(x_K)), P(X,Y\mid Z))$ that obey the inequalities \eqref{theo:1cont}. Consequently, \cref{theorem:main-result} can be restated as $\phi({\calm}_i)={\cal T}$ for $i=1,\dots, 5$.


\vspace{-0.6cm}\section{Strassen's theorem and proof of sufficiency} \label{sec:sufficiency}\vspace{-0.3cm}
In this section, we prove the sufficiency of inequalities (\ref{theo:1cont}). The proof of necessity is in Supplemental~\cref{apx:proof-theo1}. 
Since $\calm_1$ is the smallest model (see \cref{lemma:models}), we only need to show ${\cal T}\subseteq \phi({\calm}_1)$. 
Our proof relies on Strassen's theorem \citep{Strassen1965}, which characterizes the condition for the existence of a probability measure with a given support and marginals. For our purposes, we use a finite-space version stated below due to \citet{koperbergthesis}. We will apply the theorem to each arm of $Z$, which characterizes the set of pairs $\left(P(Y(x_1),\dots, Y(x_K)), P(X,Y\mid Z=z) \right)$; then we will show that these characterizations for different $z$ can be combined to prove sufficiency.  

\begin{definition}[Neighbors] \label{def:neighbor}
    Let ${\cal A}$ and ${\cal B}$ be sets and ${\cal R}\subseteq {\cal A} \times {\cal B}$ a relation. Then for each $U\subseteq {\cal A}$, the set of neighbors of $U$ in ${\cal R}$ is \vspace{-0.3cm}
    $${\cal N}_{\cal R}(U):=\{\bmv\in {\cal B}: (U\times \{\bmv\})\cap {\cal R}\neq \emptyset\}.$$
\end{definition}
\vspace{-0.3cm}

\begin{definition}[Coupling]
    Let ${\cal A}$ and ${\cal B}$ be finite sets, $P_{\cal A}$ and $P_{\cal B}$ probability measures on ${\cal A}$ and ${\cal B}$ respectively. Then a coupling of $P_{\cal A}$ and $P_{\cal B}$ is a probability measure $\check{P}$ on ${\cal A} \times {\cal B}$, such that $\check{P}$ has $P_{\cal A}$ and $P_{\cal B}$ as marginals. 
\end{definition}

\begin{theorem}[Strassen's theorem for finite sets \citep{koperbergthesis}] \label{thm:strassen}
    Let ${\cal A}$ and ${\cal B}$ be finite sets, $P_{\cal A}$ and $P_{\cal B}$ probability measures on ${\cal A}$ and ${\cal B}$ and ${\cal R}\subseteq {\cal A}\times {\cal B}$ a relation. Then, there exists a coupling $\check{P}$ of $P_{\cal A}$ and $P_{\cal B}$ that satisfies $\check{P}({\cal R})=1$ if and only if 
\vspace{-0.3cm}\begin{equation} \label{eqs:strassen}
P_{\cal A}(U)\leq P_{\cal B}({\cal N}_{\cal R}(U)) \quad \text{for all } U\subseteq {\cal A}.
\end{equation}
\end{theorem}


\vspace{-0.3cm}To adapt the theorem to our case, we introduce some notation. 
We use $\mathcal{A}$ to denote the space of potential outcomes $(Y(x_1),\ldots,Y(x_K))$, given by $\mathcal{A} = [M]^K$.
Subsets of $\mathcal{A}$ describe events of potential outcomes. For example, when $K=3$, $\{(1,1,1)\} \subset \mathcal{A}$ denotes the event $\{Y(x_1)=1, Y(x_2)=1, Y(x_3)=1\}$. Let $\mathcal{B}$ denote the space of $(X,Y)$ so we have $\mathcal{B} = [K] \times [M]$.
Further, under the
individual-level exclusion assumption (assumed by $\calm_1$) and consistency, we have the following equivalence: \vspace{-0.3cm}
\[
(X=i,Y=y) \mid Z=z \quad\iff\quad (X(z)=i,Y(x_i)=y) \mid Z=z.
\]
Let us fix $z$. For any $\bm{a} \in \mathcal{A}$ and $\bm{b} \in \mathcal{B}$, we say $\bm{a}$ and $\bm{b}$ are \emph{coherent} if they assign the same value to any variable in common, or in other words, they obey consistency.
In light of the display above, we define the coherence relation ${\cal R}_{C} \subset \mathcal{A} \times \mathcal{B}$ as 
\begin{equation} \label{eqs:coherence}
\left(\bm{a}= (y^1, \dots, y^K),\bm{b}=(i,y)\right) \in {\cal R}_{C} \iff  y^i = y. 
\end{equation}
We can view ${\cal R}_{C}$ as specifying a set of edges in a bipartite graph; see \cref{fig:coherent} for the case of binary exposure and binary outcome.
For $(\bm{a},\bm{b}) \in {\cal R}_{C}$, the conjunction of $\bm{a}$ and $\bm{b}$ under $Z=z$ corresponds to an assignment to the whole vector $(X(z), Y(x_1), \dots, Y(x_K), X, Y)$ where $X=X(z)$ and $Y=Y(X)$. 

\begin{figure}[h]
\centering
\begin{tikzpicture}[scale=0.3,thick, node distance=5mm and 15mm] 
\node[name=a00]{$(X\!=\!1,Y\!=\!1)$};
\node[name=a01,below = of a00]{$(X\!=\!1,Y\!=\!2)$};
\node[name=a10,below = of a01]{$(X\!=\!2,Y\!=\!1)$};
\node[name=a11,below = of a10]{$(X\!=\!2,Y\!=\!2)$};
\node[name=b00, left= of a00,xshift=-2cm]{$(Y(x_1)\!=\!1,Y(x_2)\!=\!1)$};
\node[name=b01,below = of b00]{$(Y(x_1)\!=\!1,Y(x_2)\!=\!2)$};
\node[name=b10,below = of b01]{$(Y(x_1)\!=\!2,Y(x_2)\!=\!1)$};
\node[name=b11,below = of b10]{$(Y(x_1)\!=\!2,Y(x_2)\!=\!2)$};
\node[name=a,above = 2mm of a00]{${\cal B}$};
\node[name=b,above = 2mm of b00]{${\cal A}$};
\draw[-] (a00.west) -- (b00.east);
\draw[-] (a00.west) -- (b01.east);
\draw[-] (a01.west) -- (b10.east);
\draw[-] (a01.west) -- (b11.east);
\draw[-] (a10.west) -- (b00.east);
\draw[-] (a10.west) -- (b10.east);
\draw[-] (a11.west) -- (b01.east);
\draw[-] (a11.west) -- (b11.east);
\end{tikzpicture}
\caption{Illustration of pairs $(\bm{a},\bm{b}) \in {\cal A} \times {\cal B}$ when $K=M=2$ under a fixed instrument arm $z$. Each edge corresponds to a coherent pair.}
\label{fig:coherent}
\end{figure}

\vspace{-0.3cm}The coherence relation leads to neighbors in the sense of \cref{def:neighbor}. 
In the example of \cref{fig:coherent}, the assignments $(Y(x_1)=1, Y(x_2)=1), (Y(x_1)=1, Y(x_2)=2)\in {\cal A}$ are both neighbors of $(X=1, Y=1)\in {\cal B}$. In general, each of the $M^{K}$ elements in $\cal A$ is connected to $K$ neighbors in ${\cal B}$, while each of the $MK$ elements in ${\cal B}$ is connected to $M^{K-1}$ neighbors in $\cal A$. The total number of edges in the bipartite graph is $KM^K$. 

Recall that our goal is to show ${\cal T}\subseteq \phi({\calm}_1)$. That is, we need to show that given any pair
\[ \left(P(Y(x_1),\dots, Y(x_K)), \;P(X,Y\mid Z)\right)\in\cal T, \]
there exists a joint distribution $P(Z,X(z_1),\dots, X(z_Q), Y(x_1),\dots, Y(x_K))$ in ${\calm}_1$ such that
\[\phi(P(Z, X , Y(x_1),\dots,Y(x_K)))= \left(P(Y(x_1),\dots, Y(x_K)), \;P(X,Y\mid Z)\right),\]
where $X=X(Z)$. 

Our proof strategy breaks this problem down by considering each $Z$ arm in turn. Specifically, the next lemma shows that if for each $z \in [Q]$, $P(X,Y\mid Z=z)$ is compatible with $P(Y(x_1),\ldots, Y(x_K))$
in that there exists a compatible joint distribution $P(X(z),Y(x_1),\ldots, Y(x_K))$, then there exists a single joint distribution 
$P(Z, X(z_1),\dots, X(z_Q),\allowbreak Y(x_1),\ldots, Y(x_K))$
over all of the $X(z)$ and $Y(x)$ potential outcomes that is compatible with every $Z$ arm. 

\begin{lemma}\label{lemma:sepz}
    
    Given a set of $Q$ distributions $P_q(X(z_q), Y(x_1), \dots, Y(x_K))$ for $q\in [Q]$ that agree on the common marginal, i.e., $P_q(Y(x_1),\dots ,Y(x_K))=P_{q'}(Y(x_1),\dots ,Y(x_K))$ for all $q, q'\in [Q]$, 
then there exists a single joint distribution 
\[P(X(z_1),\dots, X(z_Q), Y(x_1),\dots, Y(x_K))\] 
that agrees with each of these $Q$ marginals.
    \end{lemma}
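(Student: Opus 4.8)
The plan is to build the joint distribution over all the potential outcomes by coupling the treatment potential outcomes $X(z_1),\dots,X(z_Q)$ so that they are \emph{conditionally independent given the shared outcome potential outcomes} $(Y(x_1),\dots,Y(x_K))$. The agreement-of-marginals hypothesis is precisely what makes such a construction internally consistent: since every $P_q$ already induces the same law on $(Y(x_1),\dots,Y(x_K))$, we may safely route all of them through this common margin.

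First I would isolate the common margin. By hypothesis all the $P_q$ share the same marginal on $(Y(x_1),\dots,Y(x_K))$; denote it $P_0$. For each $q\in[Q]$ and each $\bm{y}=(y^1,\dots,y^K)$ with $P_0(\bm{y})>0$, I factor $P_q$ through this margin by setting
\[
P_q\!\left(X(z_q)=i \mid Y(x_1)=y^1,\dots,Y(x_K)=y^K\right) := \frac{P_q\!\left(X(z_q)=i,\, Y(x_1)=y^1,\dots,Y(x_K)=y^K\right)}{P_0(\bm{y})}.
\]
On the null set $\{P_0(\bm{y})=0\}$ I would define these conditionals arbitrarily (say, as a fixed point mass), since every term indexed by such a $\bm{y}$ will be multiplied by $P_0(\bm{y})=0$ in the construction below and hence contribute nothing.

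Next I would define the candidate joint distribution $\check{P}$ as the conditional product over the common margin,
\[
\check{P}\!\left(X(z_1)=i_1,\dots,X(z_Q)=i_Q,\, Y(x_1)=y^1,\dots,Y(x_K)=y^K\right) := P_0(\bm{y}) \prod_{q=1}^{Q} P_q\!\left(X(z_q)=i_q \mid \bm{y}\right).
\]
This is manifestly non-negative, and summing each index $i_q$ in turn collapses each conditional factor to $1$, leaving $\sum_{\bm{y}}P_0(\bm{y})=1$; hence $\check{P}$ is a genuine probability distribution. To verify the marginals, I fix $q$ and sum out $i_{q'}$ for every $q'\neq q$: each of the $Q-1$ factors $\sum_{i_{q'}}P_{q'}(X(z_{q'})=i_{q'}\mid \bm{y})$ equals $1$, so the sum collapses to $P_0(\bm{y})\,P_q(X(z_q)=i_q\mid \bm{y})=P_q(X(z_q)=i_q,\bm{y})$, which is exactly $P_q(X(z_q),Y(x_1),\dots,Y(x_K))$, as required.

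I do not expect a substantial obstacle here: the argument is the standard ``conditional product over a shared margin'' gluing, and the equality of the marginals $P_0$ is exactly the compatibility condition that lets the factors patch together into one joint law. The only point deserving explicit care is the definition of the conditionals on the measure-zero set $\{P_0(\bm{y})=0\}$, which is harmless precisely because those terms vanish in $\check{P}$. An equivalent formulation, if a more incremental exposition is preferred, is to glue the $P_q$ one at a time by induction on $q$, at each step making $X(z_q)$ conditionally independent of $(X(z_1),\dots,X(z_{q-1}))$ given $(Y(x_1),\dots,Y(x_K))$; this produces the same $\check{P}$.
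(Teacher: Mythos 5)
Your construction is exactly the paper's: the paper defines $P^*=\prod_{q=1}^Q P_q\bigl(X(z_q),Y(x_1),\dots,Y(x_K)\bigr)\big/P_1\bigl(Y(x_1),\dots,Y(x_K)\bigr)^{Q-1}$, which is precisely your conditional product $P_0(\bm{y})\prod_q P_q\bigl(X(z_q)\mid\bm{y}\bigr)$ enforcing conditional independence of the $X(z_q)$ given the common margin. Your write-up is correct and in fact slightly more careful than the paper's, since you explicitly handle the zero-probability cells where the paper's ratio is formally $0/0$.
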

    \begin{proof}
    We may form a joint distribution
\[P^*(X(z_1),\dots,X(z_Q),Y(x_1),\dots,Y(x_K))=\frac{\prod_{q=1}^Q P_q (X(z_q),Y(x_1),\dots, Y(x_K))}{P_1(Y(x_1),\dots, Y(x_K))^{Q-1}}.\]
    The resulting distribution $P^*$ agrees with each $P_q$ on the $(X(z_q), Y (x_1),\dots, Y (x_K))$ margin.
    Though not important for our argument, we note that $P^*$ enforces the joint conditional independence of the $X(z)$ counterfactuals given $Y (x_1),\dots, Y (x_K)$.
\end{proof}

Then, for sufficiency, we need to prove ${\cal T}\subseteq \phi({\calm}_i)$ for $i=1,\dots,5$, where the map $\phi$ is given by \cref{eqs:phi}.
By \cref{lemma:models}, it suffices to just show ${\cal T}\subseteq \phi({\calm}_1)$. That is, we shall show that given any $\left(P'(Y(x_1),\dots, Y(x_K)), P(X,Y\mid Z)\right)\in\cal T$,
there exists a joint distribution 
\[ P^{\ast}(Z,X(z_1),\dots, X(z_Q), Y(x_1),\dots, Y(x_K)) \in \calm_1 \]
such that 
\[\phi \left(P^{\ast}(Z, X , Y(x_1),\dots,Y(x_K)) \right)=\left(P'(Y(x_1),\dots, Y(x_K)),\; P(X,Y\mid Z)\right). \] The details of the proof can be found in Supplemental~\cref{apx:proof-theo1-sufficiency}.

\vspace{-0.6cm}\section{Eliminating redundant inequalities} \label{sec:redundancy}\vspace{-0.3cm}
Our proof of \cref{theo:redundancy} is based on the following general result, which characterizes the extremal points of the inequalities given by \cref{thm:strassen}.

\begin{proposition} \label{prop:redundancy}
Consider the set of non-trivial inequalities given by \cref{thm:strassen} that characterize the existence of a coupling $\check{P}$ supported on $\mathcal{R}$: 
\begin{equation}
P_{\cal A}(U)\leq P_{\cal B}({\cal N}_{\cal R}(U)),  \quad \emptyset \subset U \subset {\cal A}. 
\end{equation}
\vspace{-0.3cm} For $\emptyset \subset U \subset {\cal A}$, define 
\[ \mathcal{R}(U):= \Big[\mathcal{R} \cap (U \times \mathcal{N}_{\mathcal{R}}(U))\Big] \; \cup \; \left[\mathcal{R} \cap (\overline{U} \times \overline{\mathcal{N}_{\mathcal{R}}(U)})\right].  \]
Then the inequality corresponding to $U$ is redundant\footnote{Given a set of inequalities describing a polytope, the goal is to find a subset of inequalities that are non-redundant and describe the same polytope. This can be achieved by simply removing every inequality that is redundant relative to the original set because we presume that no two inequalities in the `set' are identical.} if and only if there exists $U' \neq U$, $\emptyset \subset U' \subset \mathcal{A}$ such that $\mathcal{R}(U) \subseteq \mathcal{R}(U')$. 
\end{proposition}

The proof of \cref{prop:redundancy} is in Supplemental~\cref{apx:proof-prop1}.

\cref{theo:redundancy} can be established by verifying the condition in \cref{prop:redundancy} specific to the coherence relation $\mathcal{R}_C$ in the following three parts. Let $\calv=\calv^{(1)}\times\cdots\times\calv^{(K)} \subset {\cal A}$.
\begin{enumerate}[(I)]
    \item When there exists a single $k^{\ast}$ such that $|\calv^{(k^*)}|=M-1$ and $\calv^{(k)} = [M]$ for every $k \neq k^{\ast}$, we show the non-existence of ${\calv}' \neq \calv$, $\emptyset \subset \calv' \subset \mathcal{A}$ with $\mathcal{R}_{C}(\calv) \subseteq \mathcal{R}_{C}(\calv')$. 
    
    \item When $\calv^{(k)} \neq [M]$ and $\calv^{(k^{\ast})} \neq [M]$ for $k \neq k^{\ast}$, we also show the non-existence of ${\calv}' \neq \calv$, $\emptyset \subset \calv' \subset \mathcal{A}$ satisfying $\mathcal{R}_{C}(\calv) \subseteq \mathcal{R}_{C}(\calv')$. 
    
    \item Any other inequality in \eqref{theo:1cont}, corresponding to $\calv$ with $|\calv^{(k^\ast)}|< M-1$ for a single $k^\ast$ and $\calv^{(k)} = [M]$ for every $k \neq k^{\ast}$, must be redundant. To show this, we demonstrate a set ${\calv}' \neq \calv$, $\emptyset \subset \calv' \subset \mathcal{A}$ such that $\mathcal{R}_{C}(\calv) \subseteq \mathcal{R}_{C}(\calv')$.
\end{enumerate}
The details are deferred to \cref{apx:proof-theo-redun}. It is worth mentioning that although \cref{prop:redundancy} is stated for a larger set of inequalities (i.e., corresponding to all non-trivial $U$ instead of just Cartesian-form $U$) than \eqref{theo:1cont}, the result still applies because an inequality's redundancy is determined relative to the \emph{polytope} defined by a set of inequalities. 

For a fixed, single instrument arm, the proof of \cref{prop:redundancy} shows that the extreme points (i.e., vertices) of the polytope exactly correspond to the edges defined by $\mathcal{R}_C$. As can be seen from \cref{fig:coherent}, every edge pairs a principal stratum \citep{identifictionIV1996,frangakis2002principal} of the population (e.g., ``always recover'' $Y(x_1) = Y(x_2) = 2$) with a compatible observed value (e.g., $X=1,Y=2$). We discuss this in more detail in Supplemental Section~\ref{sec:cdd}. 

\cref{prop:redundancy} can be extended to $Q \geq 1$ instrument arms. The object to characterize is the counterfactual distribution $P'(Y(x_1), \dots, Y(x_K))$ along with the observed distributions $P(X,Y \mid Z=z)$ for $z \in [Q]$, which together are identified as a polytope that is a subset of the product space 
\begin{equation} \label{eqs:prod-space}
\Delta^{M^K - 1} \times \left(\Delta^{KM-1} \right)^{Q}.
\end{equation}
By an argument similar to the proof of \cref{prop:redundancy}, each extreme point of this polytope corresponds to a principal stratum of the population (e.g., $Y(x_1)=1, Y(x_2) = 2$ when $K=2$) and a compatible observed value in each instrument arm (e.g., $X=1, Y=1$ for $z=1$, $X=2, Y=2$ for $z=2$, etc.). 

\begin{corollary} \label{cor:V-rep}
Under each of the models ${\calm}_1, \dots, {\calm}_5$, the joint counterfactual distribution $P(Y(x_1), \dots, Y(x_K))$ and the observed distributions $\left(P(X,Y \mid Z=z): z \in [Q]\right)$ are characterized as a polytope in the product space \eqref{eqs:prod-space}. The polytope has $M^K K^Q$ extreme points, given by
\[ \left\{ \delta_{y(1), \dots, y(K)} \times \prod_{z \in [Q]} \delta_{x_{z},y(x_z)}: \; y(1), \dots, y(K) \in [M],\; x_{1}, \dots, x_{Q} \in [K] \right\}.\]
\end{corollary}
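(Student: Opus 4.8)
The plan is to complement the half-space (H-) description furnished by \cref{theorem:main-result} with an explicit vertex (V-) description. Since \cref{theorem:main-result} gives $\phi(\calm_i)={\cal T}$ for every $i$, all five models share the same polytope and it suffices to work with ${\cal T}$, which sits inside the product of simplices $\Delta^{M^K-1}\times(\Delta^{KM-1})^Q$. Write $V$ for the candidate set of points
\[ v_{\bm x,\bm y} := \delta_{y(1),\dots,y(K)}\times\prod_{z\in[Q]}\delta_{x_z,\,y(x_z)}, \qquad \bm y=(y(1),\dots,y(K))\in[M]^K,\ \bm x=(x_1,\dots,x_Q)\in[K]^Q. \]
I would establish two facts: (a) every $v_{\bm x,\bm y}\in V$ lies in ${\cal T}$ and is an extreme point of it; and (b) ${\cal T}=\mathrm{conv}(V)$. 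Together these force the extreme-point set of ${\cal T}$ to coincide with $V$, and an injectivity check then yields the count $M^KK^Q$.

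For (a), note that each $v_{\bm x,\bm y}$ is a product of point masses and hence a vertex of the ambient product of simplices; since a vertex of a convex set that happens to lie in a convex subset remains extreme in that subset, it suffices to check $v_{\bm x,\bm y}\in{\cal T}$. This holds because $v_{\bm x,\bm y}$ is the image, under the marginalization/observation map $\Phi$ sending a law of $(X(z_1),\dots,X(z_Q),Y(x_1),\dots,Y(x_K))$ to its pair $(P'(Y(x_1),\dots,Y(x_K)),P(X,Y\mid Z))$, of the degenerate joint $\delta_{(\bm x,\bm y)}$; appending a $Z$ independent of these (deterministic, hence trivially $Z$-independent) potential outcomes produces a law in $\calm_1$, so $v_{\bm x,\bm y}\in\phi(\calm_1)={\cal T}$.

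The substance is (b). Given any $(P',(P_z)_{z\in[Q]})\in{\cal T}$, I would reuse the sufficiency construction verbatim: \cref{thm:strassen} supplies, for each arm $z$, a coupling of $P'$ with $P_z$ supported on the coherence relation $\Rc$, and \cref{lemma:sepz} glues these into a single law $P^\ast$ on $(X(z_1),\dots,X(z_Q),Y(x_1),\dots,Y(x_K))$ whose $(Y(x_1),\dots,Y(x_K))$-marginal is $P'$ and which reproduces each $P_z$ by consistency. Expanding $P^\ast=\sum_{\bm x,\bm y}w_{\bm x,\bm y}\,\delta_{(\bm x,\bm y)}$ into point masses and pushing forward through the linear map $\Phi$, the key computation is that consistency together with individual-level exclusion force the observed pair in arm $z$ to be $(x_z,y(x_z))$; hence $\Phi(\delta_{(\bm x,\bm y)})=v_{\bm x,\bm y}\in V$ and $(P',(P_z)_z)=\Phi(P^\ast)=\sum_{\bm x,\bm y}w_{\bm x,\bm y}\,v_{\bm x,\bm y}$ is a convex combination of elements of $V$. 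This gives ${\cal T}\subseteq\mathrm{conv}(V)$, and with (a) yields ${\cal T}=\mathrm{conv}(V)$ with extreme-point set exactly $V$; reading $\bm y$ off the counterfactual block and each $x_z$ off arm $z$ shows $(\bm x,\bm y)\mapsto v_{\bm x,\bm y}$ is injective, so $|V|=M^KK^Q$.

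The main obstacle is conceptual rather than computational: one must recognize that the correct object to decompose is the \emph{grand} joint law $P^\ast$ over all the $X(z)$ and $Y(x)$ potential outcomes, and that the observation map $\Phi$ carries its atoms precisely onto the coherent products in $V$ — the same coherence, encoded by $\Rc$, that drove the single-arm vertex analysis in \cref{prop:redundancy}. An alternative would be to extend \cref{prop:redundancy} directly to $Q$ arms, but the pushforward argument is cleaner since it recycles the sufficiency construction intact and needs only linearity of $\Phi$ and the elementary fact that products of point masses are vertices.
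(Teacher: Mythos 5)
Your proof is correct, and it reaches the corollary by a route that is organized quite differently from the paper's. The paper's own proof is a two-line appeal to \cref{theorem:main-result} plus ``the proof of \cref{prop:redundancy}'': the intended argument extends the single-arm facial analysis to $Q$ arms, locating extreme points face by face by decomposing a coupling supported on the coherence relation into point masses and showing any extreme point on the face $P_{\mathcal{A}}(U)=P_{\mathcal{B}}({\cal N}_{\cal R}(U))$ must be a coherent point-mass pair. You bypass the facial analysis entirely and prove the V-representation directly: in (a) you observe that each candidate point is a vertex of the ambient product of simplices lying in ${\cal T}$, hence extreme in ${\cal T}$ (this mirrors, in cleaner form, the paper's remark that pairs of point masses are extremal), and you certify membership $v_{\bm{x},\bm{y}}\in{\cal T}$ by exhibiting a degenerate law in $\calm_1$; in (b) you recycle the sufficiency construction --- the per-arm Strassen coupling of \cref{thm:strassen} glued by \cref{lemma:sepz} --- and push the atomic decomposition of the grand joint law through the linear observation map, noting that consistency and individual-level exclusion carry each atom $\delta_{(\bm{x},\bm{y})}$ exactly onto $v_{\bm{x},\bm{y}}$, which gives ${\cal T}\subseteq\mathrm{conv}(V)$ and hence ${\cal T}=\mathrm{conv}(V)$ with extreme-point set $V$. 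What your route buys: a self-contained and fully written proof that needs only the already-established sufficiency direction, linearity of the observation map, and an injectivity check for the count $M^K K^Q$; it also makes transparent why each extreme point is indexed by one principal stratum together with one compatible observed value per arm. What the paper's route buys: the coupling-decomposition and face analysis of \cref{prop:redundancy} are needed anyway for the non-redundancy result (\cref{theo:redundancy}), so the corollary comes essentially for free there, and that analysis additionally records which extreme points lie on which facet --- information that drives the redundancy elimination but that your argument does not produce. At bottom both proofs rest on the same structural fact, that the atoms of any compatible coupling are coherent point-mass products; the difference is whether one reaches it through the facial machinery or through a direct pushforward of the sufficiency construction.
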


Here the term 
$\delta_{y(1), \dots, y(K)}$
determines a single principal stratum of the outcome, while each term $\delta_{x_{z},y(x_z)}$ specifies a compatible degenerate observed distribution.

\begin{proof}
This follows from \cref{theorem:main-result} and the proof of \cref{prop:redundancy}. 
\end{proof}

\begin{remark}[Complexity]
In \cref{sec:inference}, we will describe a convex programming approach that streamlines partial identification and statistical inference, which treats the counterfactual and observed distributions as unknowns in the program. To express the unknowns in terms of a convex combination of the extreme points above, the V-representation \citep[p.~29]{polytope-book} approach requires $\bigO(M^K K^Q)$ parameters with $\bigO(M^K K^Q)$ inequalities. In contrast, the H-representation based on \cref{theo:redundancy} requires $\bigO(M^K + QKM)$ parameters and $\bigO(Q \,2^{MK})$ inequalities (dominated by $r$ in \eqref{eqs:numbounds}). Both representations overcome the super-exponential complexity from directly applying Artstein's inequality. Compared to the V-representation, the H-representation has the advantage of a linear dependence on $Q$, making it more suitable for optimization in most cases. However, in the setting where $Q$ and $K$ are small but $M$ is large, the V-representation can be preferable.
\end{remark}

\vspace{-0.6cm}\section{Statistical inference on partial identification bounds}\label{sec:inference}\vspace{-0.3cm}

Given an observed distribution, the inequalities of \cref{theorem:main-result,theo:redundancy}, which define the set of compatible counterfactual distributions, can be used to obtain partial identification bounds on any linear functional of the joint counterfactual distribution, such as a marginal probability $P(Y(x_i)=y)$ or an ATE between two treatment levels, with the help of existing linear programming software. To account for the sampling variability of the empirical distribution, in this section, we show how to construct finite-sample confidence intervals for such functionals that are guaranteed to contain the true values with probability no less than a pre-specified level. The construction is based on a concentration inequality introduced by \citet{GuoChernoff}, which provides a tail bound for the Kullback--Leibler divergence between the empirical distribution and the true distribution under multinomial sampling.
For a specified level $\alpha$, the bound asserts that there exists a threshold $t_{\alpha}$, depending on $K$, $M$, and the sample size in each arm, such that with probability at least $1-\alpha$, we have
\begin{equation} \label{eqs:conf-region}
 \sum_{z=1}^Q n_z \KL \left(\hat{P}(X,Y \mid Z=z) \| P(X, Y \mid Z=z) \right) \leq t_{\alpha}, 
\end{equation}
where $\hat{P}$ denotes the empirical distribution, $\KL$ denotes the Kullback--Leibler divergence and $n_z$ is the sample size in the instrument arm $z$. Due to the convexity of $\KL(\hat{P} \| \cdot)$, the bound above induces a convex confidence region for the collection of observed distributions $\left(P(X, Y \mid Z=z): z=1,\dots,Q \right)$, centered around their empirical counterparts. Further, by combining \eqref{eqs:conf-region} with \cref{theorem:main-result}, we obtain a conservative $(1-\alpha)$-level convex confidence region for the counterfactual probabilities $P'\left(Y(x_1), \dots, Y(x_K)\right)$.
By minimizing and maximizing any linear functional of the counterfactual distribution, such as an ATE, we hence obtain a confidence interval that is guaranteed to contain the true value with probability at least $1-\alpha$.
The procedure can be applied to a number of different linear functionals simultaneously and with probability at least $(1-\alpha)$ all intervals will contain their respective estimands --- this family-wise coverage guarantee follows because these intervals are all based on the same confidence region for the observed distribution.
{Our inference procedure guarantees finite-sample, simultaneous coverage without requiring any extra assumptions.
Moreover, as we will see next, the constructed confidence intervals shrink to the identified intervals at the $n^{-1/2}$ rate.
To our knowledge, no method of this kind exists in the literature on this topic. 
In fact, regular inference in this case is impossible without assuming extra regularity and margin conditions \citep{hirano2012impossibility,fang2019inference,voronin2025linear}.}


We now describe the inference algorithm in more detail. For $z=1,\dots,Q$, we use $p_z$ and $\hat{p}_z$ to denote $P(X,Y \mid Z=z)$ and $\hat{P}(X,Y \mid Z=z)$ respectively, both of which are vectors in the probability simplex $\Delta^{KM-1}$. The critical value $t_{\alpha}$ in \eqref{eqs:conf-region} can be determined numerically from the Chernoff bound 
\begin{equation} \label{eqs:tail-bound}
 P\left( \sum_{z=1}^Q n_z \KL(\hat{p}_z \| p_z) > t \right) \leq \min_{\lambda \in [0,1]} \exp(-\lambda t) \prod_{z=1}^Q G_{KM, n_z}(\lambda), 
\end{equation}
as the value of $t$ at which the right-hand side equals $\alpha$; this is implemented in the R package \texttt{multChernoff} available on CRAN. In the equation above,
\[ G_{KM, n_z}(\lambda) := \sum_{m=0}^{n_z} \frac{n_z!}{n_z^m\,(n_z-m)!} {m+KM-2\choose KM-2}\lambda^m \]
is a polynomial that upper bounds the moment generating function of $n_z \KL(\hat{p}_z \| p_z)$ \citep[Theorem 1]{GuoChernoff}; the bound \eqref{eqs:tail-bound} then follows from a standard Chernoff bound argument by independence of data across $Z$ arms. 
Under the classical asymptotic regime with fixed $K,M$ and $n_z \to \infty$, it can be shown that $G_{KM, n_z}(\lambda)$ converges to a certain moment generating function,\footnote{It converges to the MGF of $\mathrm{Gamma}(KM-1, 1)$, which has an extra factor of 2 in the shape parameter, compared to the large-sample distribution $\mathrm{Gamma}((KM-1)/2, 1)$.} which implies the $n^{-1/2}$ rate of the resulting intervals. 

\cref{theorem:main-result,theo:redundancy} yield a set of $r$ non-redundant inequalities that characterize the set of joint counterfactual distributions, where $r$ is given by \eqref{eqs:numbounds}.
In the algorithm presented in Supplemental~\cref{supp:algo-1}, we use binary matrices $H'\in\{0,1\}^{r\times M^K}$ and $H\in\{0,1\}^{r\times KM}$ to encode these inequalities, with one inequality per row. Each row of $H'$ indicates which joint counterfactual outcomes $(Y(x_1),\dots, Y(x_K))$ are in the Cartesian product $\calv^{(1)}\times\cdots\times\calv^{(K)}$, and each row of $H$ represents which observed probabilities $P(X=i, Y=m\mid Z=z)$ contribute to the right-hand side $\sum_{i=1}^KP(X=i, Y\in\calv^{(i)}\mid Z=z)$. Together, the inequalities are encoded as $H' p'\leq H p_z$ for every $z \in [Q]$; matrices $H',H$ can be obtained by the method described in Supplement~\ref{sec:cdd}. Given a collection of linear functionals of the counterfactual distribution, \cref{alg:convex} presents a convex program for constructing the confidence intervals. 
The next theorem states the algorithm's statistical guarantee. 

\begin{theorem} \label{thm:coverage}
Suppose data is generated from an IV model in the sense of any $\calm_i$ in \cref{tab:models}. Let $P_0(Y(x_1),\dots, Y(x_K))$ be the underlying counterfactual distribution. For each $j \in [J]$, let $f_j$ be a linear functional of the counterfactual distribution and let $[l_j, u_j]$ be the corresponding confidence interval obtained from \cref{alg:convex} (in Supplement~\ref{supp:algo-1}). Then, with probability at least $1-\alpha$, it holds that $l_j \leq u_j$ and $f_j(P_0) \in [l_j, u_j]$ simultaneously for all $j \in [J]$.
\end{theorem}
\begin{proof}
The tail bound \eqref{eqs:tail-bound} guarantees that with probability at least $1-\alpha$, the feasible region for $(p_z: z \in [Q])$ of the convex program contains the true population distribution. Then, by \cref{theorem:main-result}, it follows that with probability at least $1-\alpha$, the feasible region for $p'$ contains $P_0$, which implies that $l_j \leq u_j$ and $f_j(P_0) \in [l_j, u_j]$ for every $j \in [J]$. 
\end{proof}

  
  

Note that, when the IV model is not assumed a priori and \cref{alg:convex} returns $l_j=+\infty$ and $u_j=-\infty$, the IV model is falsified by the observed data.

\vspace{-0.6cm}\section{Motivating example revisited} \label{sec:real-data}\vspace{-0.3cm}

We now revisit the Minneapolis Domestic Violence Experiment introduced in \cref{sec:motivating}. Using four researchers, we compare the results obtained by our systematic approach to those obtained by two {\em ad hoc}, Procrustean approaches that attempt to apply existing methods for binary treatment $X$ to the dataset. Consider the following three pairwise average treatment effects: \vspace{-0.5cm}
\begin{align*}
\text{ATE}_j &:= P(Y(x=x_j)=2) - P(Y(x=x'_j)=2), \quad j=1,2,3, \\
&= P(\text{re-offence in 6 months under treatment $x_j$}) \\
& \qquad - P(\text{re-offence in 6 months under treatment $x'_j$}),
\end{align*}
\vspace{-0.3cm}for (1) {\em Advise} ($x_1=\text{Adv}$) vs. {\em Arrest} ($x_1'=\text{Arr}$), (2) {\em Separate} ($x_2=\text{Sep}$) vs. {\em Arrest} ($x_2'=\text{Arr}$), and (3) {\em Separate} ($x_3=\text{Sep}$) vs. {\em Advise} ($x_3'=\text{Adv}$).

\begin{sidewaystable}[p]
\caption{Results for the Minneapolis Domestic Violence Experiment obtained by different researchers}\label{tab:analysis}
\centering
\small
\begin{tabular}{cccccccc} 
\toprule
& \multirow{2}{*}{\textbf{}}  & \multicolumn{2}{c}{\textbf{Advise vs. Arrest}}     & \multicolumn{2}{c}{\textbf{Separate vs. Arrest}}   & \multicolumn{2}{c}{\textbf{Separate vs. Advise}} \\ \hline  Researcher &   & \textbf{Plug-in} & \textbf{CI (95\%)} & \textbf{Plug-in} & \textbf{CI (95\%)} & \textbf{Plug-in} & \textbf{CI (95\%)} \\ \hline
1  & \textbf{All data}   & (0.019, 0.252)   & (-0.374, 0.633)   & (0.057, 0.343)   & (-0.346, 0.702)   & (-0.184, 0.312)  & (-0.583, 0.683)  \\ \hline
2   & \textbf{\begin{tabular}[c]{@{}c@{}}Delete $X\!=\!\text{Sep}$,\\ $X\!=\!\text{Adv}$, \\  or $X\!=\!\text{Arr}$ \end{tabular}}  & NA               & (-0.283, 0.526)  & NA               & (-0.264, 0.625)   & NA    & (-0.355, 0.457)   \\ \hline
3    & \textbf{\begin{tabular}[c]{@{}c@{}}Binary IV model: \\ Delete $X\!=\!\text{Sep}$ and $Z\!=\!\text{Sep}$,\\ $X\!=\!\text{Adv}$ and $Z\!=\!\text{Adv}$,\\  or $X\!=\!\text{Arr}$ and $Z\!=\!\text{Arr}$\end{tabular}} & (0.037, 0.214)   & (-0.241, 0.506)                   & (0.066, 0.317)   & (-0.222, 0.598)                   & (-0.003, 0.121)  & (-0.337, 0.446)        \\ \hline
\multirow{3}{*}{4} & \textbf{Delete $Z\!=\!\text{Arr}$}  & (-0.675, 0.317)  & (-0.885, 0.621)     & (-0.637, 0.407)  & (-0.858, 0.691)  & (-0.184, 0.312)  & (-0.533, 0.639) \\ & \textbf{Delete $Z\!=\!\text{Adv}$}   & (-0.111, 0.856)  & (-0.407, 0.981)   & (0.057, 0.343)   & (-0.285, 0.660)   & (-0.788, 0.442)  & (-0.953, 0.721) \\ & \textbf{Delete $Z\!=\!\text{Sep}$}  & (0.019, 0.252)   & (-0.314, 0.586)  & (-0.092, 0.864)  & (-0.394, 0.985)  & 
(-0.333, 0.833) & (-0.628, 0.973)  \\ \bottomrule
\end{tabular}
\end{sidewaystable}

\begin{description}
\item [Researcher 1] They used all the data for all three pairwise ATEs (our approach). \vspace{-0.2cm} 

\item [Researcher 2] To make $X$ binary, they omitted participants who took the treatment that was not of interest for a given pairwise ATE. For example, when estimating the ATE comparing {\em Arrest} vs. {\em Advise}, they discarded the data from the treatment arm $X=\text{Sep}$. \vspace{-0.2cm}

\item [Researcher 3] Going beyond Researcher 2, they also omitted the instrument arm that assigns the treatment not of interest for a pairwise ATE. That is, when estimating the ATE comparing {\em Arrest} vs. {\em Advise}, they discarded the data with $X=\text{Sep}$ or $Z=\text{Sep}$.\vspace{-0.2cm}

\item [Researcher 4] They discarded one instrument arm to make $Z$ binary and then applied our approach with ternary $X$ and binary $Y$.\vspace{-0.2cm}
\end{description}

\cref{tab:analysis} shows the results obtained by the four researchers: each researcher computed the plug-in estimate (ignoring sampling variability) for the partially identified bounds on each ATE, and also constructed a 95\% confidence interval for each ATE using \cref{alg:convex}; the symbol NA indicates that the set of compatible counterfactual distributions is empty. Computation was performed using the \texttt{CVXR} package \citep{cvxr2020} with the \texttt{ECOS} solver \citep{domahidi2013ecos} on an ARM64 personal computer. The resulting run time, reported in Supplemental \cref{tab:analysis_runtime}, had a maximum of 5.6 seconds, demonstrating computational feasibility and efficiency. 

The bounds obtained by Researcher 1 for Advise vs. Arrest and Separate vs. Arrest are positive, indicating higher re-offence rates following non-arrest responses. 
These results are consistent with both the original findings of \cite{berkandsherman} and those of \cite{angristcrime}: namely, that 
in the Minneapolis Domestic Violence Experiment the
Arrest strategy was most effective in deterring re-offending. As explained in \cref{sec:motivating}, the analyses carried out by Researchers 2 and 3 are biased due to selection on $X$, which violates the independence assumption and hence renders the imposed binary IV model invalid (see \cref{fig:dag_assump}). In fact, the plug-in estimates from Researcher 2 fall outside the IV model. 
 
In addition to Researcher 1's analysis, that of Researcher 4 is also valid: discarding an instrument arm does not introduce bias because the instrument is randomized. The plug-in estimates obtained by Researcher 4, when removing the ``less relevant'' $Z$ arm, are numerically equal to those obtained by Researcher 1.
However, in general, using data from all the instrument arms
will lead to plug-in intervals that are no wider and sometimes strictly tighter than those obtained by Researcher 4. In our example, the
confidence intervals obtained by Researcher 4, when removing the less relevant arm, are narrower than those obtained by Researcher 1. However, it is important to remember that those obtained by Researcher 1 have simultaneous coverage, while those obtained by Researcher 4 only guarantee marginal coverage. 

\vspace{-0.6cm}\section{Conclusion and discussion}\label{sec:discussion}\vspace{-0.3cm}

In this paper, we provide a set of linear inequalities that describe the relationship between the joint counterfactual distribution and the observed data distribution under categorical IV models, where the instrument, treatment, and outcome all take finitely many values. The set of inequalities are shown to be necessary, sufficient and non-redundant under the various IV models considered in the literature. 
{Our work fills an important gap between existing general characterizations that involve a super-exponential number of redundant inequalities and simpler formulations that provide only necessary conditions and therefore do not yield sharp bounds or sharp falsification criteria. By deriving a minimal sharp characterization, our results make valid and computationally tractable analysis possible for IV models with categorical instrument, treatment and outcome levels.}
Our results are established using a version of Strassen's theorem on finite sets (\cref{thm:strassen,prop:redundancy}), which may be of interest for other problems. Further, we demonstrate how to construct confidence intervals for ATEs through a convex program that incorporates the IV inequalities along with a finite-sample bound that handles sampling variability. 

We leave the following items for future work: (1) extending the results to a continuous outcome and/or instrument, (2) obtaining explicit instrumental inequalities (\cref{rem:falsify}) for a falsification test, and (3) constructing less conservative confidence intervals. 


\spacingset{1}
\bibliography{reference}

\newpage
\spacingset{1.8}
\setsupplementnumbering

\begin{center}
  {\Large\bfseries
  Supplementary Materials for ``The Categorical Instrumental Variable Model''}
\end{center}

\bigskip

This supplement is organized as follows. \Cref{apx:proof-theo1,apx:proof-theo1-sufficiency} prove the necessity and sufficiency of the inequalities in \cref{theorem:main-result}, and \cref{apx:proof-prop1,apx:proof-theo-redun} prove \cref{prop:redundancy,theo:redundancy}. \Cref{supp:algo-1} gives the convex program for constructing the confidence intervals, and \cref{sec:cdd} derives the V- and H-representations of the model polytope. \Cref{sec:runtime} reports run times for the data analysis. \cref{sec:feasibility} constructs the falsification test for the IV model. 
Finally, \cref{appendix:russell,supp:KM} relate our characterization to the work of \citet{Russell} and \citet{KedagniMourifie}.



\section{Proof of necessity for \cref{theorem:main-result}} \label{apx:proof-theo1}

In light of \cref{lemma:models}, to establish the necessity of inequalities (\ref{theo:1cont}), it suffices to show (i) $\phi({\calm}_3)\subseteq {\cal T}$, (ii) $\phi({\calm}_4)\subseteq {\cal T}$, and (iii) $\phi({\calm}_5)\subseteq {\cal T}$. Recall that $Y(x) := Y(x,Z)$ is the potential outcome of $Y$ when only $X$ is intervened on, which is essential to the proof in this section. 

\subsection{Proof of necessity under model $\calm_3$}
In \cref{assumption:indep}, we considered various versions of independence between the instrument $Z$ and the potential outcome $Y(x,z)$. In fact, combining the independence assumption with an appropriate exclusion restriction leads to the independence between $Z$ and $Y(x)$, as demonstrated by the next result for $\calm_3$.

\begin{lemma}\label{lem:joint-ind}
Under $\calm_3$, we have $Z \ind Y(x_1),\ldots ,Y(x_K)$.
\end{lemma}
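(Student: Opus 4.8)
The plan is to reduce the target statement $Z \ind (Y(x_1),\ldots,Y(x_K))$ to the claim that the conditional law of $(Y(x_1),\ldots,Y(x_K))$ given $Z=z$ does not depend on $z$, and to establish this in three elementary steps, each invoking exactly one ingredient in the definition of $\calm_3$ together with the defining relation $Y(x_i):=Y(x_i,Z)$. I would fix arbitrary $z\in[Q]$ and $v_1,\ldots,v_K\in[M]$ throughout.

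First I would unpack the definition $Y(x_i)=Y(x_i,Z)$. On the event $\{Z=z\}$ we have $Y(x_i)=Y(x_i,z)$ almost surely, so
\[
P\bigl(Y(x_1)=v_1,\ldots,Y(x_K)=v_K \mid Z=z\bigr)
= P\bigl(Y(x_1,z)=v_1,\ldots,Y(x_K,z)=v_K \mid Z=z\bigr).
\]
Next I would remove the conditioning using joint independence (A\ref{assumption:indep}-2): since $Z$ is independent of the entire collection $\{Y(x,z'):x\in[K],\,z'\in[Q]\}$, it is in particular independent of the sub-vector $(Y(x_1,z),\ldots,Y(x_K,z))$ for this fixed $z$, giving
\[
P\bigl(Y(x_1,z)=v_1,\ldots,Y(x_K,z)=v_K \mid Z=z\bigr)
= P\bigl(Y(x_1,z)=v_1,\ldots,Y(x_K,z)=v_K\bigr).
\]
Finally I would apply joint stochastic exclusion (A\ref{assumption:exclusion}-2), which states that the law of $(Y(x_1,z),\ldots,Y(x_K,z))$ is identical for every $z$; hence the right-hand side above is a quantity that does not depend on $z$. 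Chaining the three displays shows that $P(Y(x_1)=v_1,\ldots,Y(x_K)=v_K \mid Z=z)$ is constant in $z$, which is precisely $Z \ind (Y(x_1),\ldots,Y(x_K))$.

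I expect the only delicate point to be the very first step, namely justifying that conditioning on $\{Z=z\}$ converts the $Z$-dependent object $Y(x_i,Z)$ into the fixed-arm variable $Y(x_i,z)$; this is careful bookkeeping rather than a substantive obstacle. The conceptual content lies in keeping the two assumptions' roles separate: joint independence is what licenses dropping the conditioning on $Z$, while joint stochastic exclusion is what eliminates the residual dependence on $z$ in the \emph{unconditioned} law. Neither assumption alone suffices, and it is their combination that yields independence between $Z$ and the intervention-on-$X$-only potential outcomes $Y(x_i)=Y(x_i,Z)$.
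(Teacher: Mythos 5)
Your proposal is correct and follows essentially the same route as the paper's proof: consistency to replace $Y(x_i)$ by $Y(x_i,z)$ on the event $\{Z=z\}$, joint independence (A\ref{assumption:indep}-2) to drop the conditioning, and joint stochastic exclusion (A\ref{assumption:exclusion}-2) to remove the residual dependence on $z$. The only cosmetic difference is that the paper chains the equalities back down to the conditional law given $Z=\tilde z$ for an arbitrary second arm, whereas you stop at the observation that the unconditioned law is constant in $z$; these are the same argument.
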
 

\begin{proof} For any $z,\tilde{z} \in [Q]$ and any $y^1, \dots, y^K \in [M]$, we have
\begin{align*}
\MoveEqLeft[10]{P\left(\left.Y(x_1)=y^1,\ldots ,Y(x_K)=y^K\right| Z=z\right)}\\
\text{(consistency)}\; &= P\left(\left.Y(x_1,z)=y^1,\ldots ,Y(x_K,z)=y^K\right| Z=z\right)\\
\text{(by joint independence \eqref{eq:joint-indep})} &= P\left(Y(x_1,z)=y^1,\ldots ,Y(x_K,z)=y^K\right)\\
\text{(by joint stochastic exclusion \eqref{eq:joint-stoc-exclusion})} &= P\left(Y(x_1,\tilde{z})=y^1,\ldots ,Y(x_K,\tilde{z})=y^K\right)\\
\text{(by joint independence \eqref{eq:joint-indep})} &= P\left(\left.Y(x_1,\tilde{z})=y^1,\ldots ,Y(x_K,\tilde{z})=y^K \right| Z=\tilde{z}\right)\\
\text{(consistency)}\; &= P\left(\left.Y(x_1)=y^1,\ldots ,Y(x_K)=y^K \right| Z=\tilde{z}\right).
\end{align*}
\end{proof}


\begin{proof}[Proof of $\phi({\calm}_3)\subseteq {\cal T}$] For any $P \in \calm_3$, it holds that 
     \begin{align*}
\MoveEqLeft[6]{\sum_{i=1}^K P\left(\left.X=i, Y\in \calv^{(i)}\right| Z=z\right)}\\[-4pt]
 \text{(by consistency)\quad}   &=  \sum_{i=1}^K P\left(\left. X=i, Y(x_i)\in \calv^{(i)}\right| Z=z \right)\\
    & \geq \sum_{i=1}^K 
   P\left(\left. X=i, Y(x_1)\in \calv^{(1)},\ldots , Y(x_K)\in \calv^{(K)}\right| Z=z\right)\\
    &= P\left(\left. Y(x_1)\in \calv^{(1)},\ldots , Y(x_K)\in \calv^{(K)}\right| Z=z \right)\\
\text{(by \cref{lem:joint-ind})\quad}    &= P\left(Y(x_1)\in \calv^{(1)},\ldots , Y(x_K)\in \calv^{(K)}\right).
\end{align*}
\end{proof}


\subsection{Proof of necessity under the SWIG model $\calm_4$}
\begin{proof}[Proof of $\phi({\calm}_4)\subseteq {\cal T}$]
Under $\calm_4$, we have $Y(x) := Y(x,Z) = Y(x,z)$ almost surely by individual-level exclusion. Hence, the single-world independence then implies 
\begin{equation} \label{eqs:swig-ind}
Z \ind X(z), Y(x), \quad x \in [K], z \in [Q]. 
\end{equation}
For every $z \in [Q]$, we have
\begin{align*}
\MoveEqLeft[5]{\sum_{i=1}^K P\left(\left.X=i, Y\in \calv^{(i)}\right| Z=z\right)}  \\[-4pt]
\text{(consistency)}\, &= \sum_{i=1}^K P\left(\left. X(z)=i, Y(x_i)\in \calv^{(i)}\right| Z=z\right)\\
\text{(by \cref{eqs:swig-ind})}\,    &= \sum_{i=1}^K P\left(X(z)=i, Y(x_i)\in \calv^{(i)}\right)\\
    &\geq \sum_{i=1}^K P\left(X(z)=i, Y(x_1)\in \calv^{(1)},\ldots, Y(x_i)\in \calv^{(i)},\ldots, Y(x_K)\in \calv^{(K)}\right)\\
    &= P\left(Y(x_1)\in \calv^{(1)},\ldots , Y(x_i)\in \calv^{(i)},\dots, Y(x_K)\in \calv^{(K)}\right).
\end{align*}
\end{proof}

\subsection{Proof of necessity under the latent model $\calm_5$}
We first prove the following lemma. 

\begin{lemma}\label{lemma:latent-ind}
Under the latent model $\calm_5$, we have 
\[
Y(x) \ind X, Z \mid U.
\]
\end{lemma}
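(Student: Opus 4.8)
The plan is to show that the conditional law of $Y(x)$ given $(X, Z, U)$ does not depend on $(X, Z)$, by reducing everything to the $U$-conditional law of the fixed-level potential outcomes $Y(x,z)$ and then invoking the two defining assumptions of $\calm_5$ in turn. First I would fix arbitrary values $y \in [M]$, $x' \in [K]$, $z' \in [Q]$, and a value $u$ of $U$, and condition on the event $\{X = x', Z = z', U = u\}$. On this event, consistency gives $Y(x) = Y(x, Z) = Y(x, z')$, so that
\[ P(Y(x) = y \mid X = x', Z = z', U = u) = P(Y(x, z') = y \mid X = x', Z = z', U = u). \]

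Next I would apply the latent-variable independence assumption \eqref{eq:latent-exogeneity}, which gives $Y(x, z') \ind X, Z \mid U$ for the fixed level $z'$; this removes the conditioning on $(X, Z)$ and leaves $P(Y(x, z') = y \mid U = u)$. Then the latent stochastic exclusion assumption \eqref{eq:latent-exclusion} asserts that the $U$-conditional law of $Y(x, z')$ is the same for every $z'$; writing $g(y, x, u)$ for this common value, I obtain
\[ P(Y(x) = y \mid X = x', Z = z', U = u) = g(y, x, u), \]
which is free of $(x', z')$. To finish I would check that the quantity $P(Y(x) = y \mid U = u)$ marginalized over $(X,Z)$ equals the same $g(y, x, u)$: conditioning on $Z = z'$ and using consistency, then \eqref{eq:latent-exogeneity} to drop $Z$ given $U$, then \eqref{eq:latent-exclusion} to drop the dependence on $z'$, shows each term $P(Y(x) = y \mid Z = z', U = u)$ equals $g(y,x,u)$; averaging over $Z$ given $U$ yields $P(Y(x) = y \mid U = u) = g(y, x, u)$. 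Since the two conditional probabilities coincide for all values, this establishes $Y(x) \ind X, Z \mid U$.

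The main obstacle is purely the bookkeeping around the composite variable $Y(x) = Y(x, Z)$: because $Z$ is random, neither assumption can be applied directly to $Y(x)$, and the argument must pass through the fixed-level outcomes $Y(x,z)$ on each event $\{Z = z'\}$ before the two assumptions can be chained. Care is needed to invoke \eqref{eq:latent-exogeneity} only at a fixed level $z'$, where it is stated, and to use \eqref{eq:latent-exclusion} to reconcile the different levels; beyond this, no integration or further measure-theoretic machinery is required.
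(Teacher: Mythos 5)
Your proposal is correct and follows essentially the same route as the paper's proof: condition on $\{X=x', Z=z', U=u\}$, use consistency to replace $Y(x)$ by $Y(x,z')$, apply latent-variable independence \eqref{eq:latent-exogeneity} to drop the conditioning on $(X,Z)$, and apply latent stochastic exclusion \eqref{eq:latent-exclusion} to identify the common $U$-conditional law across levels of $z$. The only (immaterial) difference is the final bookkeeping: the paper concludes by running the chain backwards to an arbitrary second conditioning event $(x^{**}, z^{**})$, whereas you explicitly verify that $P(Y(x)=y \mid U=u)$ equals the same quantity $g(y,x,u)$ by averaging over $Z$ given $U$ --- both correctly establish $Y(x) \ind X, Z \mid U$.
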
 
\begin{proof} For any $x^{\ast}, x^{\ast\ast} \in [K]$, any $z^{\ast}, z^{\ast\ast} \in [Q]$ and any value $u$ of $U$, we have
\begin{align*}
\MoveEqLeft[10]{P\left(\left.Y(x)=y \;\right| X=x^*, Z=z^*, U=u\right)}\\
\text{(consistency)}\; &= P(Y(x,z^*)=y \mid X=x^*, Z=z^*, U=u)\\
\text{(by \cref{eq:latent-exogeneity})}\; &= P(Y(x,z^*)=y \mid U=u)\\
\text{(by \cref{eq:latent-exclusion})}\; &= P(Y(x,z^{**})=y \mid U=u)\\
\text{(by \cref{eq:latent-exogeneity})}\; &= P(Y(x,z^{**})=y \mid X=x^{**}, Z=z^{**}, U=u)\\
\text{(consistency)}\; &= P(Y(x)=y \mid X=x^{**}, Z=z^{**}, U=u).
\end{align*}
\end{proof}

\begin{proof}[Proof of $\phi({\calm}_5)\subseteq {\cal T}$] Without much loss of generality, we assume $U$ is a discrete random variable in the proof below. We have
\begin{align*} 
\MoveEqLeft{\sum_{i=1}^K P\left(\left. X=i, Y\in \calv^{(i)} \;\right| Z=z\right)}\\[-4pt]
    &=\sum_u \sum_{i=1}^K P\left(\left. X=i, Y(x_i) \in \calv^{(i)}, U=u\;\right| Z=z\right) \\
    &\overset{(a)}{=} \sum_u \left(\sum_{i=1}^K P\left(\left. Y(x_i)\in \calv^{(i)}\right| X=i, U=u, Z=z\right)\cdot P\left(\left. X=i\;\right| U=u, Z=z\right)\right)\\[-8pt]
    &\kern120pt\quad\quad\quad\quad \ \ \ \cdot P(U=u\mid Z=z)\\
    &\overset{(b)}{=} \sum_u\left(\sum_{i=1}^K P\left(\left.Y(x_i)\in \calv^{(i)}\right| U=u\right)\cdot P(X=i\mid U=u, Z=z)\right)\cdot P(U=u)\\
    &\geq \sum_u \left(\sum_{i=1}^K P\left(\left. Y(x_1)\in \calv^{(1)},\dots, Y(x_i)\in \calv^{(i)},\dots, Y(x_K)\in\calv^{(K)}\right| U=u\right)\right. \\[-10pt]
    &\kern140pt\ \ \ \left.
    \vphantom{\sum_{i=1}^K}
    \cdot P(X=i\mid U=u, Z=z)\right)\cdot P(U=u)\\[-4pt]
    &\geq \sum_u \left(
    \vphantom{\sum_{i=1}^K}
    P\left(\left. Y(x_1)\in \calv^{(1)},\dots, Y(x_i)\in \calv^{(i)},\dots, Y(x_K)\in\calv^{(K)}\right| U=u\right)\right. \\[-14pt]
    &\kern140pt\ \ \ \left.\cdot \sum_{i=1}^K P(X=i\mid U=u, Z=z)\right)\cdot P(U=u)\\
    &= \sum_u P\left(\left.Y(x_1)\in\calv^{(1)},\dots, Y(x_K)\in\calv^{(K)}\right| U=u\right)\cdot P(U=u)\\
    &= P\left(Y(x_1)\in\calv^{(1)},\dots, Y(x_K)\in\calv^{(K)}\right),
\end{align*}
where step (a) uses consistency, and step (b) uses \cref{eq:latent-exogeneity,lemma:latent-ind}.
\end{proof}

\section{Proof of sufficiency for \cref{theorem:main-result}}\label{apx:proof-theo1-sufficiency}

\begin{proof}[Proof of sufficiency of the inequalities in \cref{theorem:main-result}]
For sufficiency, we need to prove ${\cal T}\subseteq \phi({\calm}_i)$ for $i=1,\dots,5$, where the map $\phi$ is given by \cref{eqs:phi}.
By \cref{lemma:models}, it suffices to just show ${\cal T}\subseteq \phi({\calm}_1)$. That is, we shall show that given any $\left(P'(Y(x_1),\dots, Y(x_K)), P(X,Y\mid Z)\right)\in\cal T$,
there exists a joint distribution 
\[ P^{\ast}(Z,X(z_1),\dots, X(z_Q), Y(x_1),\dots, Y(x_K)) \in \calm_1 \]
such that 
\[\phi \left(P^{\ast}(Z, X , Y(x_1),\dots,Y(x_K)) \right)=\left(P'(Y(x_1),\dots, Y(x_K)),\; P(X,Y\mid Z)\right). \]

Under model ${\calm}_1$, for $z \in [Q]$, we have 
\begin{equation} \label{eq:consistM1}
P(X(z)=i, Y(x_i)=j)=P(X(z)=i, Y(x_i)=j\mid Z=z)=P(X=i, Y=j\mid Z=z).
\end{equation}
    
\cref{lemma:sepz} implies that we can consider each level $z \in [Q]$ of $Z$ separately: if we can construct $Q$ coupling distributions over $(X(z), Y(x_1),\dots, Y(x_K))$ for $z \in [Q]$ that each obey (\ref{eq:consistM1}) and agree on the $(Y(x_1),\dots, Y(x_K))$ margin, then we can form a single joint distribution. Hence, it remains to show that given any pair $(P'(Y(x_1),\dots, Y(x_K)),P(X,Y\mid Z))$ that satisfies the inequalities \eqref{theo:1cont}, there exist joint distributions $P_{z}(X(z),Y(x_1),\dots, Y(x_K))$ for $z \in [Q]$ such that
\begin{equation} \label{eqs:couple-z}
\begin{split}
    P_z\left(Y(x_1),\dots,Y(x_K)\right)&=_{d} P'\left(Y(x_1),\dots,Y(x_K)\right), \text{ and }\\
    P_z(X(z)=i, Y(x_i)=j) &= P(X=i,Y=j\mid Z=z) \text{ for all } i,j.
\end{split}
\end{equation}

We are ready to apply \cref{thm:strassen}. Let $z$ be fixed. Note that $P'(Y(x_1),\dots, Y(x_K))$ is a probability measure on $\mathcal{A} = [M]^K$ and $P(X,Y\mid Z=z)$ is a probability measure on $\mathcal{B} = [K] \times [M]$. We shall show that the inequalities \eqref{theo:1cont} suffice to ensure the existence of a desired joint distribution $P_z(X(z),Y(x_1),\dots, Y(x_K))$ that meets \cref{eqs:couple-z}. Inequalities (\ref{theo:1cont}) (modulo trivial inequalities) assert that for every $\calv^{(1)},\dots,\calv^{(K)} \subseteq [M]$, it holds that 
\[P'\left(Y(x_1) \in {\calv}^{(1)},\ldots,Y(x_K)\in {\calv}^{(K)}\right)\;\leq\; \sum_{i=1}^K P\left(\left.X\!=\!i, Y\in {\calv}^{(i)}\right| Z\!=\!z\right). \] 
We now compare them to the characterization in \cref{thm:strassen}. For any non-empty $\mathcal{U} \subseteq \mathcal{A} = [M]^K$, let $\mathcal{U}^{(1)}, \dots, \mathcal{U}^{(K)} \subseteq [M]$ be its coordinate-wise projections, which are also non-empty. By the coherence relation $\mathcal{R}_{{C}}$ defined in \cref{eqs:coherence}, the set of neighbors of $\mathcal{U}$ is given by
\[ \mathcal{N}_{\mathcal{R}_{{C}}}(\mathcal{U}) = \bigcup_{i=1}^K \{i\} \times \mathcal{U}^{(i)}. \]
Hence, \cref{thm:strassen} posits that for every non-empty $\mathcal{U} \subseteq [M]^K$, 
\begin{equation} \label{eqs:ineq-strassen}
P'\left(\left(Y(x_1),\dots,Y(x_K)\right) \in \mathcal{U} \right)\;\leq\; \sum_{i=1}^K P\left(\left.X\!=\!i, Y\in {\mathcal{U}}^{(i)}\right| Z\!=\!z\right).
\end{equation}
Yet, observe that it suffices to only consider every Cartesian-form $\mathcal{U}$, i.e., one satisfying $\mathcal{U} = \mathcal{U}^{(1)} \times \dots \times \mathcal{U}^{(K)}$, because among the sets with the same coordinate-wise projections (and hence the same RHS), this $\mathcal{U}$ maximizes the LHS. Collecting \cref{eqs:ineq-strassen} for non-empty Cartesian-form $\mathcal{U}$'s gives the inequalities (\ref{theo:1cont}).
\end{proof}

\section{Proof of \cref{prop:redundancy}}\label{apx:proof-prop1}
\begin{proof}
By the representation theorem for polytopes \citep[Theorem 2.15]{polytope-book}, an inequality is non-redundant iff the hyperplane $P_{\mathcal{A}}(U) = P_{\mathcal{B}}({\cal N}_{\cal R}(U))$ defines a facet of the polytope of pairs of marginal distributions $(P_{\cal A}, P_{\cal B})$ that are compatible with a coupling supported on $\cal R$. A facet is a face of the polytope that is bounded by a maximal (by inclusion) set of extremal points (i.e., vertices) of the polytope. Hence, it suffices to prove that $\mathcal{R}(U)$ (or more precisely, the corresponding pairs of point masses $\left\{(\delta_{\bm{a}}, \delta_{\bm{b}}): (\bm{a},\bm{b}) \in \mathcal{R}(U) \right\}$) is the set of extremal points on the face defined by $P_{\mathcal{A}}(U) = P_{\mathcal{B}}({\cal N}_{\cal R}(U))$. 

First, we show that for every $(\bm{a},\bm{b}) \in \mathcal{R}(U)$ the pair of distributions $(\delta_{\bm{a}}, \delta_{\bm{b}})$ forms an extremal point that satisfies
 $P_{\mathcal{A}}(U) = P_{\mathcal{B}}({\cal N}_{\cal R}(U))$.
 Consider the corresponding coupling $\check{P}=\delta_{(\bm{a},\bm{b})}$. Under $\check{P}$, for $(\bm{a}, \bm{b}) \in \mathcal{R} \cap (U \times \mathcal{N}_{\mathcal{R}}(U))$, on the implied margins we have $P_{\cal A}(U)= P_{\cal A}(\{\bm{a}\}) = P_{\cal B}({\cal N}_{\cal R}(U))= P_{\cal B}(\{\bm{b}\}) = 1$; similarly, for $(\bm{a}, \bm{b}) \in \mathcal{R} \cap (\overline{U} \times \overline{\mathcal{N}_{\mathcal{R}}(U)})$, we have $P_{\cal A}(U)= 1-P_{\cal A}(\{\bm{a}\}) = P_{\cal B}({\cal N}_{\cal R}(U))=1-P_{\cal B}(\{\bm{b}\})=0$. Hence in both cases we have
$P_{\mathcal{A}}(U) = P_{\mathcal{B}}({\cal N}_{\cal R}(U))$, so this equality defines a face. Furthermore, $(\delta_{\bm{a}}, \delta_{\bm{b}})$ is an extremal point because both $P_{\mathcal{A}}$ and $P_{\mathcal{B}}$ take the form of a point mass. 

Now we argue that the face defined by $P_{\mathcal{A}}(U) = P_{\mathcal{B}}({\cal N}_{\cal R}(U))$ cannot contain any other extremal point
besides those in ${\cal R}(U)$. To prove by contradiction, suppose there is an extremal point
that does not correspond to any 
$(\delta_{\bm{a}}, \delta_{\bm{b}})$ for
$(\bm{a}, \bm{b}) \in \mathcal{R}(U)$.
Let $\check{P}$ be any corresponding coupling measure.
Recall that $\check{P}(\mathcal{R})=1$ and we can decompose $\mathcal{R}$ as
\begin{align*}
\mathcal{R} &= \mathcal{R}(U) \,\cup\, \Big[\mathcal{R} \cap (\overline{U} \times \mathcal{N}_{\mathcal{R}}(U))\Big] \,\cup\, \Big[\mathcal{R} \cap (U \times \overline{\mathcal{N}_{\mathcal{R}}(U)})\Big]  \\
&= \mathcal{R}(U) \,\cup\, \Big[\mathcal{R} \cap (\overline{U} \times \mathcal{N}_{\mathcal{R}}(U))\Big],
\end{align*}
since $\mathcal{R} \cap (U \times \overline{\mathcal{N}_{\mathcal{R}}(U)}) = \emptyset$ by definition of neighbors. 
Notice that for any pair $(\bm{a}, \bm{b}) \in \mathcal{R}(U)$, if $\check{P}(\bm{a}, \bm{b})=w > 0$
then this either contributes $w$ to both $P_{\cal A}(U)$
and $P_{\mathcal{B}}(\mathcal{N}_{\mathcal{R}}(U))$,
or contributes $0$ to both.
However, if $\check{P}(\bm{a}',\bm{b}') = w>0$ for some $(\bm{a}', \bm{b}') \in \mathcal{R} \cap (\overline{U} \times \mathcal{N}_{\mathcal{R}}(U))$, then $P_{\mathcal{B}}(\mathcal{N}_{\mathcal{R}}(U))$ receives mass $w$ but $P_{\mathcal{A}}(U)$ receives zero mass. Since we have shown that this cannot be offset by mass assigned to any $(\bm{a}, \bm{b}) \in \mathcal{R}(U)$, it follows that under $\check{P}$, $P_{\mathcal{A}}(U) \neq P_{\mathcal{B}}({\cal N}_{\cal R}(U))$, thus this extremal point is not in this face, which is a contradiction. Therefore, we have $\check{P}(\mathcal{R}(U))=1$. If $\check{P}$ is a point mass, then the extremal point is already in $\mathcal{R}(U)$; otherwise, $\check{P}$ is a mixture of point masses, which implies that the extremal point can be written as a convex combination of points in $\mathcal{R}(U)$ and hence, again, a contradiction. 
\end{proof}

\section{Proof of \cref{theo:redundancy}} \label{apx:proof-theo-redun}
In this Appendix, we focus on the bipartite graph associated with the coherence relation ${\cal R}_{C}$ defined in \cref{eqs:coherence}. We use $\Nc(\cdot)$ and $\Nc'(\cdot)$ to denote the set of neighbors for a subset of $\mathcal{A}$ and $\mathcal{B}$ respectively. The relation ${\cal R}_{C}$ has the following property. 

\begin{lemma}\label{lemma:reverse-ineq}
{\rm (1)} For $\calv = \calv^{(1)} \times \dots \times \calv^{(K)} \subseteq \mathcal{A} = [M]^K$, let $B = \overline{\Nc(\calv)} \subseteq \mathcal{B} = [K] \times [M]$. Then, we have $\calv=\overline{\Nc'(B)}$.

{\rm (2)} For $B\subseteq {\cal B}=[K]\times[M]$, let $\calv=\overline{\Nc'(B)}\subseteq {\cal A}=[M]^K$. Then, we have $B=\overline{\Nc(\calv)}$.
\end{lemma}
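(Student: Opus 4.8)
The plan is to reduce every neighbor-and-complement operation to a coordinate-wise membership test, after which both parts become a one-line substitution. Conceptually the two parts are the two halves of the antitone Galois connection induced by the incoherence relation $\overline{\mathcal{R}_C}$ (for $U\subseteq\mathcal{A}$ one checks that $\overline{\Nc(U)}=\{\bm{b}:\forall \bm{a}\in U,\,(\bm{a},\bm{b})\notin\mathcal{R}_C\}$, and dually for $B\subseteq\mathcal{B}$), so a single computation drives both. Throughout, given $B\subseteq\mathcal{B}=[K]\times[M]$ I write $B^{(i)}:=\{y\in[M]:(i,y)\in B\}$ for its $i$-th slice, so that $B=\bigcup_{i=1}^K\{i\}\times B^{(i)}$.

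First I would record two dual formulas. From the projection identity already used in the sufficiency proof, for a nonempty product $\calv=\calv^{(1)}\times\cdots\times\calv^{(K)}$ we have $\Nc(\calv)=\bigcup_{i=1}^K\{i\}\times\calv^{(i)}$, hence $\overline{\Nc(\calv)}=\bigcup_{i=1}^K\{i\}\times\overline{\calv^{(i)}}$; that is, the slices of $\overline{\Nc(\calv)}$ are exactly the $\overline{\calv^{(i)}}$. Dually, reading \eqref{eqs:coherence} directly, a point $\bm{a}=(a^1,\dots,a^K)$ fails to neighbor $B$ precisely when no coordinate witnesses membership: $\bm{a}\in\overline{\Nc'(B)}\iff a^i\notin B^{(i)}$ for every $i$. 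Thus $\overline{\Nc'(B)}=\overline{B^{(1)}}\times\cdots\times\overline{B^{(K)}}$ is automatically a Cartesian product whose $i$-th factor is $\overline{B^{(i)}}$. This pair of identities is the whole engine of the argument.

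Part (1) is then immediate: with $B=\overline{\Nc(\calv)}$ we have $B^{(i)}=\overline{\calv^{(i)}}$, so $\overline{\Nc'(B)}=\prod_i\overline{B^{(i)}}=\prod_i\calv^{(i)}=\calv$. For Part (2), the dual formula gives $\calv=\overline{\Nc'(B)}=\prod_i\overline{B^{(i)}}$, a product with factors $\calv^{(i)}=\overline{B^{(i)}}$; feeding this back through the $\Nc$-of-a-product formula and complementing yields $\overline{\Nc(\calv)}=\bigcup_i\{i\}\times\overline{\calv^{(i)}}=\bigcup_i\{i\}\times B^{(i)}=B$.

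The single delicate point — and what I expect to be the main obstacle — is the boundary behaviour when a factor collapses, since the formula $\Nc(\calv)=\bigcup_i\{i\}\times\calv^{(i)}$ presupposes every $\calv^{(i)}\neq\emptyset$. For Part (1) this is harmless: if $\calv=\emptyset$ then $\overline{\Nc(\calv)}=\mathcal{B}$ and $\overline{\Nc'(\mathcal{B})}=\emptyset=\calv$, so the identity survives under the conventions $\Nc(\emptyset)=\emptyset$ and $\Nc'(\mathcal{B})=\mathcal{A}$. For Part (2) one must track the case $B^{(i)}=[M]$ for some $i$, which forces $\calv=\overline{\Nc'(B)}=\emptyset$ and hence $\overline{\Nc(\calv)}=\mathcal{B}$; the clean round-trip therefore requires that no slice of $B$ be all of $[M]$ (equivalently $\overline{\Nc'(B)}\neq\emptyset$). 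I would either impose this non-degeneracy as a standing hypothesis or observe that every $B$ of the form $\overline{\Nc(\calv)}$ arising from a nonempty product automatically has proper slices, which covers the use made of Part (2) in \cref{theo:redundancy}.
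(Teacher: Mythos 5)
Your proof is correct and follows essentially the same route as the paper's: your two slice identities, $\overline{\Nc(\calv)}=\bigcup_{i=1}^K\{i\}\times\overline{\calv^{(i)}}$ and $\overline{\Nc'(B)}=\overline{B^{(1)}}\times\cdots\times\overline{B^{(K)}}$, are exactly the displays the paper derives from the coherence relation (the paper obtains the second one by applying de Morgan's law to a union of cylinder sets), and both parts then reduce to the same substitution.

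One substantive remark: the degeneracy caveat you raise for Part (2) is genuine, and the paper's own proof does not address it. If some slice satisfies $B^{(i)}=[M]$ (e.g.\ $B=\{1\}\times[M]$ with $K\geq 2$), then $\calv=\overline{\Nc'(B)}=\emptyset$, so $\overline{\Nc(\calv)}=\mathcal{B}\neq B$ and Part (2) as literally stated fails; the paper's final display in that proof silently invokes the product formula for $\Nc(\calv)$, which is valid only when every factor $\overline{B^{(i)}}$ is non-empty. Your fix is the right one and matches how the lemma is actually used: immediately after stating it, the paper restricts attention to sets $B$ with $\emptyset\subset B\subset\mathcal{B}$ and $\Nc'(B)\subset\mathcal{A}$ (i.e.\ non-trivial inequalities, equivalently no full slice), which is precisely your non-degeneracy hypothesis.
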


\begin{proof}

(1) By definition of coherence in ${\cal R}_{C}$, we have 
\[ B = \overline{\Nc(\calv
)} = \bigcup_{i=1}^K \left(\{i\} \times \overline{\calv^{(i)}}\right). \]
It follows that 
\begin{multline*}
\Nc'(B) = \left(\overline{\calv^{(1)}} \times [M] \times \dots \times [M]\right) \cup \left( [M] \times \overline{\calv^{(2)}} \times [M] \dots \times [M]\right) \\
\cup \dots \cup \left( [M] \times  \dots \times [M] \times \overline{\calv^{(K)}} \right).
\end{multline*}
Then, we have by de Morgan's law
\begin{multline*}
\overline{\Nc'(B)} = \left({\calv^{(1)}} \times [M] \times \dots \times [M]\right) \cap \left( [M] \times {\calv^{(2)}} \times [M] \dots \times [M]\right) \\
\cap \dots \cap \left( [M] \times  \dots \times [M] \times {\calv^{(K)}} \right),
\end{multline*}
and hence we have $\calv=\overline{\Nc'(B)}$.

(2) By definition of coherence in ${\cal R}_{C}$, we have 
\[ \calv = \overline{\Nc'(B
)} = \prod_{i=1}^K\calv^{(i)} \text{, where }\calv^{(i)}=\{v|(i,v)\notin B\}. \] 
Then, we have 
\[\overline{\calv^{(i)}}=\{v|(i,v)\in B\}.\]
Thus, it follows that 
\[\overline{\Nc(\calv)}=\bigcup_{i=1}^K \left(\{i\} \times \overline{\calv^{(i)}}\right)=B.\]
\end{proof}

Together, \cref{lemma:reverse-ineq}(1) and \cref{lemma:reverse-ineq}(2) establish that there is a one-to-one correspondence between all $\calv = \calv^{(1)} \times \dots \times \calv^{(K)} \subseteq \mathcal{A}$ and all $B\subseteq {\cal B}$.  Hence, the set of inequalities 
\begin{equation}\label{eq:Binequality}
    P(B) \leq P'(\Nc'(B)) = P'(\overline{\mathcal{V}}), \quad \emptyset \subset B \subset \mathcal{B}.
\end{equation}
is equivalent to the set of inequalities in \cref{theo:1cont}
\[ P'(\mathcal{V}) \leq P(\overline{B}), \quad \emptyset \subset \overline{B} \subset \mathcal{B}. \]

By \cref{prop:redundancy}, the inequality corresponding to $B$ is associated with the set of extremal points described by the set of edges
\begin{equation} \label{eqs:RB}
\Rc(B) = \Big[\Rc \cap (\Nc'(B) \times B) \Big] \; \cup \; \left[\Rc \cap (\overline{\Nc'(B)} \times \overline{B})\right].
\end{equation}
The inequality is redundant iff there exists $B' \neq B$ such that $\Rc(B) \subseteq \Rc(B')$. 

%
\begin{figure}[!htb]
    \centering
        \begin{subfigure}[t]{0.3\textwidth}
\begin{tikzpicture}[scale=0.48, transform shape] 
\node[name=b00, text=blue]{$(Y(x_1)\!=\!1,Y(x_2)\!=\!1)$};
\node[name=b01,below = of b00, yshift=0.6cm, text=blue]{$(Y(x_1)\!=\!1,Y(x_2)\!=\!2)$};
\node[name=b02,below = of b01, yshift=0.6cm, text=blue]{$(Y(x_1)\!=\!1,Y(x_2)\!=\!3)$};
\node[name=b10,below = of b02, yshift=0.6cm, text=blue]{$(Y(x_1)\!=\!2,Y(x_2)\!=\!1)$};
\node[name=b11,below = of b10, yshift=0.6cm, text=red]{$(Y(x_1)\!=\!2,Y(x_2)\!=\!2)$};
\node[name=b12,below = of b11, yshift=0.6cm, text=red]{$(Y(x_1)\!=\!2,Y(x_2)\!=\!3)$};
\node[name=b20,below = of b12, yshift=0.6cm, text=blue]{$(Y(x_1)\!=\!3,Y(x_2)\!=\!1)$};
\node[name=b21,below = of b20, yshift=0.6cm, text=red]{$(Y(x_1)\!=\!3,Y(x_2)\!=\!2)$};
\node[name=b22,below = of b21, yshift=0.6cm, text=red]{$(Y(x_1)\!=\!3,Y(x_2)\!=\!3)$};

\node[name=a00, right= of b00, xshift=1.6cm, text=blue]{{$\boxed{(X\!=\!1,Y\!=\!1)}$}};
\node[name=a01,below = of a00, text=red]{$(X\!=\!1,Y\!=\!2)$};
\node[name=a02,below = of a01, text=red]{$(X\!=\!1,Y\!=\!3)$};
\node[name=a10,below = of a02, text=blue]{ $\boxed{(X\!=\!2,Y\!=\!1)}$};
\node[name=a11,below = of a10, text=red]{$(X\!=\!2,Y\!=\!2)$};
\node[name=a12,below = of a11, text=red]{$(X\!=\!2,Y\!=\!3)$};

\node[name=a,above = 3mm of a00]{${\cal B}$};
\node[name=b,above = 3mm of b00]{${\cal A}$};

\draw[blue,-] (a00.west) -- (b00.east);
\draw[blue,-] (a00.west) -- (b01.east);
\draw[blue,-] (a00.west) -- (b02.east);
\draw[blue,-] (a10.west) -- (b00.east);
\draw[blue,-] (a10.west) -- (b10.east);
\draw[blue,-] (a10.west) -- (b20.east);

\draw[red,-] (a01.west) -- (b11.east);
\draw[red,-] (a01.west) -- (b12.east);

\draw[red,-] (a02.west) -- (b21.east);
\draw[red,-] (a02.west) -- (b22.east);

\draw[red,-] (a11.west) -- (b11.east);
\draw[red,-] (a11.west) -- (b21.east);

\draw[red,-] (a12.west) -- (b12.east);
\draw[red,-] (a12.west) -- (b22.east);

\end{tikzpicture}
        \caption{Example of Case I where $|B|>1$ and $B$ contains more than one $X$-level.}
    \end{subfigure}
    ~ 
    \begin{subfigure}[t]{0.3\textwidth}
\begin{tikzpicture}[scale=0.48, transform shape] 
\node[name=b00,text=blue]{$(Y(x_1)\!=\!1,Y(x_2)\!=\!1)$};
\node[name=b01,below = of b00, yshift=0.6cm, text=blue]{$(Y(x_1)\!=\!1,Y(x_2)\!=\!2)$};
\node[name=b02,below = of b01, yshift=0.6cm, text=blue]{$(Y(x_1)\!=\!1,Y(x_2)\!=\!3)$};
\node[name=b10,below = of b02, yshift=0.6cm, text=red]{$(Y(x_1)\!=\!2,Y(x_2)\!=\!1)$};
\node[name=b11,below = of b10, yshift=0.6cm, text=red]{$(Y(x_1)\!=\!2,Y(x_2)\!=\!2)$};
\node[name=b12,below = of b11, yshift=0.6cm, text=red]{$(Y(x_1)\!=\!2,Y(x_2)\!=\!3)$};
\node[name=b20,below = of b12, yshift=0.6cm, text=red]{$(Y(x_1)\!=\!3,Y(x_2)\!=\!1)$};
\node[name=b21,below = of b20, yshift=0.6cm, text=red]{$(Y(x_1)\!=\!3,Y(x_2)\!=\!2)$};
\node[name=b22,below = of b21, yshift=0.6cm, text=red]{$(Y(x_1)\!=\!3,Y(x_2)\!=\!3)$};

\node[name=a00, xshift=1.6cm, right= of b00, text=blue]{{$\boxed{(X\!=\!1,Y\!=\!1)}$}};
\node[name=a01,below = of a00, text=red]{$(X\!=\!1,Y\!=\!2)$};
\node[name=a02,below = of a01, text=red]{$(X\!=\!1,Y\!=\!3)$};
\node[name=a10,below = of a02, text=red]{$(X\!=\!2,Y\!=\!1)$};
\node[name=a11,below = of a10, text=red]{$(X\!=\!2,Y\!=\!2)$};
\node[name=a12,below = of a11, text=red]{$(X\!=\!2,Y\!=\!3)$};

\node[name=a,above = 3mm of a00]{${\cal B}$};
\node[name=b,above = 3mm of b00]{${\cal A}$};

\draw[blue,-] (a00.west) -- (b00.east);
\draw[blue,-] (a00.west) -- (b01.east);
\draw[blue,-] (a00.west) -- (b02.east);
\draw[red,-] (a01.west) -- (b10.east);
\draw[red,-] (a01.west) -- (b11.east);
\draw[red,-] (a01.west) -- (b12.east);
\draw[red,-] (a02.west) -- (b20.east);
\draw[red,-] (a02.west) -- (b21.east);
\draw[red,-] (a02.west) -- (b22.east);

\draw[red,-] (a10.west) -- (b10.east);
\draw[red,-] (a10.west) -- (b20.east);

\draw[red,-] (a11.west) -- (b11.east);
\draw[red,-] (a11.west) -- (b21.east);

\draw[red,-] (a12.west) -- (b12.east);
\draw[red,-] (a12.west) -- (b22.east);
\end{tikzpicture}
        \caption{Example of Case II where $|B|=1$.}
    \end{subfigure}%
    ~ 
    \begin{subfigure}[t]{0.3\textwidth}
\begin{tikzpicture}[scale=0.48, transform shape] 

\node[name=b00, text=blue]{$(Y(x_1)\!=\!1,Y(x_2)\!=\!1)$};
\node[name=b01,below = of b00, yshift=0.6cm, text=blue]{$(Y(x_1)\!=\!1,Y(x_2)\!=\!2)$};
\node[name=b02,below = of b01, yshift=0.6cm, text=blue]{$(Y(x_1)\!=\!1,Y(x_2)\!=\!3)$};
\node[name=b10,below = of b02, yshift=0.6cm, text=blue]{$(Y(x_1)\!=\!2,Y(x_2)\!=\!1)$};
\node[name=b11,below = of b10, yshift=0.6cm, text=blue]{$(Y(x_1)\!=\!2,Y(x_2)\!=\!2)$};
\node[name=b12,below = of b11, yshift=0.6cm, text=blue]{$(Y(x_1)\!=\!2,Y(x_2)\!=\!3)$};
\node[name=b20,below = of b12, yshift=0.6cm, text=red]{$(Y(x_1)\!=\!3,Y(x_2)\!=\!1)$};
\node[name=b21,below = of b20, yshift=0.6cm, text=red]{$(Y(x_1)\!=\!3,Y(x_2)\!=\!2)$};
\node[name=b22,below = of b21, yshift=0.6cm, text=red]{$(Y(x_1)\!=\!3,Y(x_2)\!=\!3)$};

\node[name=a00, right= of b00, xshift=1.6cm, text=blue]{{ $\boxed{(X\!=\!1,Y\!=\!1)}$}};
\node[name=a01,below = of a00, text=blue]{$\boxed{(X\!=\!1,Y\!=\!2)}$};
\node[name=a02,below = of a01, text=red]{$(X\!=\!1,Y\!=\!3)$};
\node[name=a10,below = of a02, text=red]{ $(X\!=\!2,Y\!=\!1)$};
\node[name=a11,below = of a10, text=red]{ $(X\!=\!2,Y\!=\!2)$};
\node[name=a12,below = of a11, text=red]{ $(X\!=\!2,Y\!=\!3)$};

\node[name=a,above = 3mm of a00]{${\cal B}$};
\node[name=b,above = 3mm of b00]{${\cal A}$};

\draw[blue,-] (a00.west) -- (b00.east);
\draw[blue,-] (a00.west) -- (b01.east);
\draw[blue,-] (a00.west) -- (b02.east);
\draw[blue,-] (a01.west) -- (b10.east);
\draw[blue,-] (a01.west) -- (b11.east);
\draw[blue,-] (a01.west) -- (b12.east);

\draw[red,-] (a02.west) -- (b20.east);
\draw[red,-] (a02.west) -- (b21.east);
\draw[red,-] (a02.west) -- (b22.east);

\draw[red,-] (a10.west) -- (b20.east);

\draw[red,-] (a11.west) -- (b21.east);

\draw[red,-] (a12.west) -- (b22.east);
\end{tikzpicture}
        \caption{Example of Case III where $|B|>1$ but $B$ contains only one $X$-level.}
    \end{subfigure}
\caption{For each $B$ (vertices in box), $\Rc(B)$ consists of both blue edges (between $\Nc'(B)$ and $B$) and red edges (between $\overline{\Nc'(B)}$ and $\overline{B}$). By \cref{lemma:reverse-ineq}, there is a one-to-one correspondence between ${\cal V}\subseteq{\cal A}$ and $B\subseteq{\cal B}$, and we have ${\cal V}=\overline{\Nc'(B)}$. Note that the edges in (c) are contained in those in (a).}
\label{fig:RB}
\end{figure}

We prove \cref{theo:redundancy} in the three parts outlined in \cref{sec:redundancy}, which correspond to sets $B=\overline{\Nc(\calv)}$ such that
$\emptyset \subset B \subset \mathcal{B}$
and $\Nc'(B) \subset \mathcal{A}$,
since $B=\emptyset$, $B={\cal B}$ or 
$\Nc'(B) = \mathcal{A}$ correspond to trivial inequalities.


\begin{enumerate}[(I)]
    \item There exist $k \neq k^{\ast}$ such that $\calv^{(k)}\neq [M]$ and $\calv^{(k^\ast)}\neq [M]$. As shown in \cref{fig:RB}(a), in this case we have that $|B|>1$ and $B$ contains more than one $X$-level. Again we will show there is no $B'\neq B$ such that $\Rc(B) \subseteq \Rc(B')$. \label{item:I}
    \item There exists a single $k^{\ast}$ such that $|\calv^{(k^*)}|=M-1$ and $\calv^{(k)} = [M]$ for every $k \neq k^{\ast}$. As shown in \cref{fig:RB}(b), in this case we have $|B|=1$. We show there is no $B'\neq B$ such that $\Rc(B) \subseteq \Rc(B')$
    for which the inequality \cref{eq:Binequality} is non-trivial. \label{item:II}
\end{enumerate}
In both cases this suffices to establish that the inequality corresponding to $B$ is non-redundant.

\begin{enumerate}[(I)]
\setcounter{enumi}{2}
    \item There exists a single $k^\ast \in[K]$ such that $|\calv^{(k^\ast)}|<M-1$ and $\calv^{(k)} = [M]$ for every $k \neq k^{\ast}$. As shown in 
    \cref{fig:RB}(c), in this case we have $|B|>1$ but $B$ contains only one $X$-level. We will show there exists $B'\neq B$, such that \cref{eq:Binequality} is non-trivial, but  $\Rc(B) \subseteq \Rc(B')$, so that by \cref{prop:redundancy}, 
    the inequality corresponding to $B$ is redundant.\label{item:III}
\end{enumerate}


\subsection{Lemmas}

We first introduce the following lemmas.

\begin{lemma}\label{lemma:A-bar-multix}
If $B$ corresponds to a non-trivial inequality in \cref{eq:Binequality}, then $\overline{B}$ contains every level of $X$.  
\end{lemma}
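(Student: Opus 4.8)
The plan is to recast the conclusion ``$\overline{B}$ contains every level of $X$'' into the combinatorially transparent statement that, for each $i \in [K]$, the slice $\{i\}\times[M]$ is \emph{not} entirely contained in $B$, and then to read this off directly from the product structure of $\calv = \overline{\Nc'(B)}$ supplied by \cref{lemma:reverse-ineq}(2). Saying that $\overline{B}$ meets the $X$-level $i$ means exactly that there is some $y \in [M]$ with $(i,y)\in\overline{B}$, i.e.\ $(i,y)\notin B$; so the whole lemma amounts to showing that no column $\{i\}\times[M]$ of $\mathcal{B}$ is swallowed by $B$.

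First I would recall from \cref{lemma:reverse-ineq}(2) that for any $B \subseteq \mathcal{B}$ the associated set decomposes as a Cartesian product
$\calv = \overline{\Nc'(B)} = \prod_{i=1}^K \calv^{(i)}$, where $\calv^{(i)} = \{v \in [M] : (i,v) \notin B\}$; equivalently $\overline{\calv^{(i)}} = \{v : (i,v) \in B\}$ records precisely which entries of the $i$-th column lie in $B$. Under this description, the claim that $\overline{B}$ contains the $X$-level $i$ is \emph{identical} to the claim that $\calv^{(i)} \neq \emptyset$. Next I would invoke non-triviality: as recorded in the setup preceding the lemma, a non-trivial inequality \cref{eq:Binequality} requires $\Nc'(B) \subset \mathcal{A}$ as a proper subset, which is equivalent to $\calv = \overline{\Nc'(B)} \neq \emptyset$. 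Since $\calv$ is a Cartesian product, it is non-empty if and only if every factor $\calv^{(i)}$ is non-empty. Combining this with the reformulation above, every $X$-level $i$ admits some $v$ with $(i,v) \in \overline{B}$, and hence $\overline{B}$ contains every level of $X$.

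I expect no serious obstacle here; the argument is essentially bookkeeping on the product structure of $\calv$ already established in \cref{lemma:reverse-ineq}. The one point that deserves a moment's care is isolating the correct half of the non-triviality definition: it is the condition $\Nc'(B) \subset \mathcal{A}$ (equivalently $\calv \neq \emptyset$), rather than the condition $\emptyset \subset B \subset \mathcal{B}$, that actually drives the conclusion, because it is the non-emptiness of $\calv$ — and thus of each of its factors — and not the non-emptiness of $B$, that rules out any column being absorbed into $B$.
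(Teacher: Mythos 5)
Your proof is correct and is essentially the paper's own argument: the paper likewise uses non-triviality to conclude that $\overline{\Nc'(B)}$ is non-empty and then reads off, coordinate by coordinate, a point of $\overline{B}$ at every $X$-level. The only cosmetic difference is that the paper picks an explicit witness type $(y^1,\dots,y^K)\in\overline{\Nc'(B)}$ and notes that each $(i,y^i)$ must lie in $\overline{B}$, whereas you route the same step through the Cartesian factorization $\overline{\Nc'(B)}=\prod_{i=1}^K\calv^{(i)}$ established in the proof of \cref{lemma:reverse-ineq}(2).
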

\begin{proof}
If $B$ leads to a non-trivial inequality, then there is at least one type, \newline{$\left(Y(x_1)=y^1,\dots, Y(x_K)=y^K\right)$}, in $\cal A$ that is not a neighbor of $B$. The set of $y$ values for each $X$-level in $\left(Y(x_1)=y^1,\dots, Y(x_K)=y^K\right)$ satisfies the claim. 
\end{proof}
\begin{lemma}\label{lemma:events-connection}
If there exists a path in $\Rc(B)$ between one point in $\mathcal{B}$ and one point in $\cal A$, then the two points are either in $B$ and $\Nc'(B)$ respectively, or in $\overline{B}$ and $\overline{\Nc'(B)}$ respectively. 
\end{lemma}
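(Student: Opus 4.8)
The plan is to recognize $\Rc(B)$ as a bipartite graph on the vertex set $\mathcal{A}\sqcup\mathcal{B}$ whose edge set never crosses between the two ``blocks'' $\big(\Nc'(B),B\big)$ and $\big(\overline{\Nc'(B)},\overline{B}\big)$; once this is established, the lemma is immediate, because a path cannot move from one block to the other.

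First I would define $V_1 := \Nc'(B)\cup B$ and $V_2 := \overline{\Nc'(B)}\cup\overline{B}$, regarded as vertex subsets of $\mathcal{A}\sqcup\mathcal{B}$. Since $\{\Nc'(B),\overline{\Nc'(B)}\}$ partitions $\mathcal{A}$ and $\{B,\overline{B}\}$ partitions $\mathcal{B}$, the pair $\{V_1,V_2\}$ partitions $\mathcal{A}\sqcup\mathcal{B}$, and intersecting back with each side gives $\mathcal{B}\cap V_1=B$, $\mathcal{A}\cap V_1=\Nc'(B)$, $\mathcal{B}\cap V_2=\overline{B}$, and $\mathcal{A}\cap V_2=\overline{\Nc'(B)}$. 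These identities are exactly the membership conclusions the lemma asserts, so it remains only to show that a path keeps both endpoints inside a single $V_j$.

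The key step, and the only one carrying any content, is to read off from the definition \eqref{eqs:RB} that every edge of $\Rc(B)$ lies wholly within one block: by construction each $(\bm a,\bm b)\in\Rc(B)$ belongs either to $\Rc\cap(\Nc'(B)\times B)$, in which case both endpoints lie in $V_1$, or to $\Rc\cap(\overline{\Nc'(B)}\times\overline{B})$, in which case both endpoints lie in $V_2$. Because $V_1\cap V_2=\emptyset$, no edge of $\Rc(B)$ has one endpoint in $V_1$ and the other in $V_2$; equivalently, $V_1$ and $V_2$ are each unions of connected components of $\Rc(B)$.

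Given this no-crossing-edge property, a trivial induction on the length of a path shows that every vertex of the path lies in the same block as its starting vertex. Hence if a path connects $\bm b\in\mathcal{B}$ to $\bm a\in\mathcal{A}$, the two endpoints share a block: either $\bm b\in\mathcal{B}\cap V_1=B$ together with $\bm a\in\mathcal{A}\cap V_1=\Nc'(B)$, or $\bm b\in\mathcal{B}\cap V_2=\overline{B}$ together with $\bm a\in\mathcal{A}\cap V_2=\overline{\Nc'(B)}$, which is precisely the dichotomy claimed. I expect no real obstacle here; the whole argument reduces to the observation that the defining decomposition of $\Rc(B)$ forbids edges between the two blocks, with the remaining steps being routine bookkeeping about the bipartition.
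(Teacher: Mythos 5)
Your proof is correct and is essentially the paper's argument made explicit: the paper disposes of this lemma with ``this follows from the definition of $\Rc(B)$,'' and the content of that remark is exactly your observation that the defining decomposition \eqref{eqs:RB} places every edge wholly inside one of the two blocks $\Nc'(B)\cup B$ or $\overline{\Nc'(B)}\cup\overline{B}$, so a path can never cross between them. Your write-up simply supplies the routine bookkeeping (the partition of the disjoint union and the induction on path length) that the paper leaves implicit.
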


\begin{proof} This follows from the definition of $\Rc(B)$.
\end{proof}

   

\begin{lemma}\label{lemma:graph-connection}
    Suppose 
    $\emptyset \subset B \subset \mathcal{B}$. The following hold:
    \begin{enumerate} 
        \item Every pair $(\bm{a},\bm{b})$ with  ${\bm{b}} \in \overline{B}$
and  ${\bm{a}} \in \overline{\Nc'{(B)}} $ 
is connected by a path in $\Rc(B)$.
        \item  If $|B|>1$ and $B$ contains points with more than one $X$-level, then between every point $\bm{b} \in B$ and every point $\bm{a} \in \Nc'{(B)}$ there exists a path in $\Rc(B)$.
    \end{enumerate}
\end{lemma}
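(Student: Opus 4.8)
The plan is to translate both statements into pure connectivity claims about the bipartite graph $\Rc(B)$ and to exploit the product structure supplied by \cref{lemma:reverse-ineq}. Writing $\calv = \overline{\Nc'(B)} = \calv^{(1)}\times\cdots\times\calv^{(K)}$, that lemma gives $B = \bigcup_{i=1}^K \{i\}\times\overline{\calv^{(i)}}$, hence $\overline{B} = \bigcup_{i=1}^K \{i\}\times\calv^{(i)}$ and $\Nc'(B)=\mathcal{A}\setminus\calv$. Recalling \cref{eqs:RB}, the edge set $\Rc(B)$ is the disjoint union of the ``red'' edges joining $\calv$ to $\overline{B}$ and the ``blue'' edges joining $\Nc'(B)$ to $B$; Part 1 is the assertion that the red subgraph is connected, and Part 2 that the blue subgraph is connected. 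Throughout I will use the basic incidence structure of the coherence relation: a vector $\bm{a}=(y^1,\dots,y^K)\in\mathcal{A}$ is adjacent in $\Rc$ exactly to the $K$ points $(i,y^i)$, $i\in[K]$, while a point $(i,y)\in\mathcal{B}$ is adjacent to every $\bm{a}$ with $y^i=y$.

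For Part 1, I would first connect any two $\bm{a},\bm{a}'\in\calv$ by changing one coordinate at a time: since $\calv$ is the box $\prod_i\calv^{(i)}$, one interpolates through intermediate vectors lying in $\calv$, and any two consecutive interpolants agree on $K-1\ge 1$ coordinates (here $K\ge 2$ is used), so they share a common neighbor $(j,\cdot)$ which lies in $\overline{B}$ because that shared coordinate value lies in $\calv^{(j)}$. This shows the $\calv$-vertices form a single component. Every remaining vertex $(i,y)\in\overline{B}$ has $y\in\calv^{(i)}$ and can be completed to a full vector in $\calv$ (each $\calv^{(j)}$ is nonempty whenever $\calv\neq\emptyset$, which is exactly the case in which Part 1 is non-vacuous), so it attaches to this component. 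Hence the red subgraph is connected.

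For Part 2, I would first note that each $(i,y)\in B$ satisfies $y\in\overline{\calv^{(i)}}$, so every $\bm{a}$ with $y^i=y$ automatically lies in $\Nc'(B)$ and is a blue neighbor of $(i,y)$. To connect two points of $B$: if they live in distinct $X$-levels $i\ne i'$, a single vector $\bm{a}$ whose $i$-th and $i'$-th coordinates are prescribed joins both; if they share an $X$-level, I route through a point of $B$ in a \emph{second} $X$-level, which exists precisely because $B$ meets more than one level --- this is the one place the hypothesis is used. Thus $B$ lies in a single blue component. Finally, any $\bm{a}\in\Nc'(B)$ has some coordinate $y^i\in\overline{\calv^{(i)}}$, hence is a blue neighbor of $(i,y^i)\in B$ and joins this component, giving connectivity of the blue subgraph.

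The routine content is the coordinate-interpolation argument; the only point requiring care is to verify that the ``more than one $X$-level'' hypothesis is genuinely needed and is invoked exactly in the same-level subcase of Part 2 (when $B$ meets a single level the blue subgraph splits into one component per value of $\overline{\calv^{(i)}}$, which is the redundant Case III). I also need to keep the degenerate situations in view: $\calv=\emptyset$ (equivalently $\Nc'(B)=\mathcal{A}$) renders Part 1 vacuous, and such cases are excluded by the non-triviality conventions already fixed above.
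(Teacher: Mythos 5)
Your proof is correct and follows essentially the same approach as the paper's: both exploit the product structure $\overline{\Nc'(B)}=\calv^{(1)}\times\cdots\times\calv^{(K)}$ from \cref{lemma:reverse-ineq} and connect vertices via common-neighbor pivots, with Part 2 handled by the identical case split (distinct $X$-levels versus same level routed through a third point of $B$). The only cosmetic difference is in Part 1, where you establish connectivity of $\calv$ by coordinate-wise interpolation and then attach the points of $\overline{B}$, whereas the paper exhibits direct paths of length at most three between each pair; the underlying mechanism is the same.
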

\begin{proof}

   Observe that by definition, in $\Rc(B)$ all points $(X=\alpha^*, Y=\beta^*)$ in $\overline{B}$ are adjacent to all points in $\overline{\Nc'(B)}\cap \{Y(\alpha^*)=\beta^*\}$.
Similarly, in $\Rc(B)$, all points $(X=\alpha', Y=\beta')$ in $B$ are adjacent to $\Nc'(B)\cap \{Y(\alpha')=\beta'\}$. 
    

\begin{enumerate}
\item If ${\Nc'(B)}={\cal A}$, then the claim follows trivially. Otherwise, by
\cref{lemma:A-bar-multix}, 
$\overline{B}$ contains more than one $X$-level.
 
Let $(X=\alpha_1, Y=\beta_1)\in\overline{B}$. We know in $\Rc(B)$, we have $(X=\alpha_1, Y=\beta_1)\leftrightarrow\overline{\Nc'(B)}\cap \{Y(\alpha_1)=\beta_1\}$. 
Therefore, it is sufficient to prove there is a path connecting $(X=\alpha_1, Y=\beta_1)$ to each point in  $\overline{\Nc'(B)}\cap \{Y(\alpha_1)\neq\beta_1\}$. 
For an arbitrary type $\bm{a} = (Y(\alpha_1)=\gamma, Y(\alpha_2)=\beta_2, \dots)\in\overline{\Nc'(B)}\cap \{Y(\alpha_1)\neq \beta_1\}$ where $\gamma\neq \beta_1$, we know $(X=\alpha_1, Y=\gamma), (X=\alpha_2, Y=\beta_2)\in\overline{B}$, since otherwise the type $\bm{a}$ would be in $\Nc'(B)$.
Hence, we have an edge $(X=\alpha_2, Y=\beta_2)\leftrightarrow
\bm{a}$ 
in $\Rc(B)$ since it is connecting $\overline{B}\leftrightarrow\overline{\Nc'(B)}$. 
Let $\bm{a}^*$ be the type corresponding to $\bm{a}$ but replacing 
$\gamma$ with $\beta_1$.
Since $(X=\alpha_1, Y=\beta_1) \in\overline{B}$ and $\bm{a} \in\overline{\Nc'(B)}$, we have $\bm{a}^*=(Y(\alpha_1)=\beta_1, Y(\alpha_2)=\beta_2, \dots)\in\overline{\Nc'(B)} $, so we have an edge $(X=\alpha_2, Y=\beta_2)\leftrightarrow \bm{a}^*$ in $\Rc(B)$. 
Hence, in $\Rc(B)$, we have $\bm{a}\leftrightarrow(X=\alpha_2,Y=\beta_2)\leftrightarrow \bm{a}^* \leftrightarrow(X=\alpha_1, Y=\beta_1)$. 
Since $\bm{a}$ is arbitrary,
the conclusion follows.
\item 
Since any point in $\Nc'(B)$ is connected to at least one point in $B$, it suffices to show there exists a path in $\Rc(B)$ between every pair of events
$\bm{b}_1,\bm{b}_2\in B$.
Consider $(X=\alpha_1, Y=\beta_1)$ and $(X=\alpha_2, Y=\beta_2)$ in $B$. If $\alpha_1\neq \alpha_2$, then we have $(X=\alpha_1, Y=\beta_1)\leftrightarrow(Y(\alpha_1)=\beta_1, Y(\alpha_2)=\beta_2,\dots )\leftrightarrow (X=\alpha_2, Y=\beta_2)$, since $(Y(\alpha_1)=\beta_1, Y(\alpha_2)=\beta_2,\dots)\in\Nc'((X=\alpha_1, Y=\beta_1))\cap \Nc'((X=\alpha_2, Y=\beta_2))$. If $\alpha_1= \alpha_2$, then there exists a point $(X=\alpha_3, Y=\beta_3)\in B$ where $\alpha_3\neq \alpha_1=\alpha_2$ since by hypothesis $B$ contains points with more than one $X$-level. Since $\alpha_1\neq \alpha_3 \neq \alpha_2$, we know $(X=\alpha_3, Y=\beta_3)$ is connected with both $(X=\alpha_1, Y=\beta_1)$ and $(X=\alpha_2, Y=\beta_2)$ by similar arguments as above. Hence, $(X=\alpha_1, Y=\beta_1)$ and $(X=\alpha_2, Y=\beta_2)$ are connected as well. 
\end{enumerate}

    
\end{proof}

\begin{remark}\label{remark:graph-connection}
\cref{lemma:graph-connection}.1 directly implies that any two points in $\overline{B}$ (or $\overline{\Nc'(B)}$) are connected by a path in $\Rc(B)$. Similarly, \cref{lemma:graph-connection}.2 implies that if $|B|>1$ and $B$ contains points with more than one $X$-level, then any two points in $B$ (or $\Nc'(B)$) are connected by a path in $\Rc(B)$.  
\end{remark}



\subsection{Proof of (I)}

\begin{proof}
We consider two cases: i. $B'\not\subset B$, and ii. $B' \subset B$.
\begin{description}
\item[Case i.] Let $B'\not\subset B$. Suppose for a contradiction that $\Rc(B) \subseteq \Rc(B')$. We will show that the inequality induced by $B'$ is trivial which is a contradiction. Since $B'\not\subset B$, there exists $\bm{b}'\in B'$ such that $\bm{b}'\in \overline{B}$. Let $\bm{b}':=(X=i, Y=y^{i})$, and $A:=\Nc'{(\bm{b}')}=\{\left(Y(x_1)=\tilde{y}^{1},\dots, Y(x_i)=y^{i},\dots, Y(x_K)=\tilde{y}^{K}\right): \tilde{y}^{1},\dots, \tilde{y}^{i-1},\tilde{y}^{i+1},\dots, \tilde{y}^{K}\in[M]\}$. Note that $A\subseteq \Nc'(B')$.

We partition $A$ into $A_1$ and $A_2$ such that $A_1:= \Nc'(B)\cap A$, $A_2:= A\setminus\Nc'(B)\subseteq\Nc'(B')\cap\overline{\Nc'(B)}$. We further claim that $A_1, A_2\neq \emptyset$. We first show $A_1$ is non-empty. Since by hypothesis $B$ contains points with at least two $X$-levels, there exists a point $ (X=k, Y=y^k)$ in $B$ such that $k\neq i$. Hence, we have $(Y(x_k)=y^k, Y(x_i)=y^i,\dots)\in\Nc'(B)\cap A$ which is in $A_1$. Now we show $A_2\neq \emptyset$ by showing there exists $\bm{a}$ such that $\bm{a}\in \Nc'(\bm{b}')$ and $\bm{a}\not\in\Nc'(B)$. By \cref{lemma:A-bar-multix}, we know for all $x\in[K]$, there exists a point $(X=x, Y=y)\not\in B$.
We further know $\bm{b}'=(X=i, Y=y^i)\not\in B$. Thus we have points $(X=1, Y=y^1), \dots, (X=i, Y=y^i),\dots, (X=K, Y=y^K)$ in $\overline{B}$. Then, we have $\bm{a}=(Y(x_1)=y^1, \dots, Y(x_i)=y^i,\dots, Y(x_K)=y^K)\in \Nc'(\bm{b}')$ but not in $\Nc'(B)$ as desired. 

Now, we will show $B\subseteq B'$ and $\overline{B}\subseteq B'$ to establish the contradiction that $B'=\cal B$. 
By \cref{lemma:graph-connection}.2
there is a path connecting any $\bm{a}_1\in A_1\subseteq \Nc'(B)$ to all $\bm{b}\in B$ in $\Rc(B)$, thus also in $\Rc(B')$. Since $A_1\subseteq {\Nc'(\bm{b}')}$, we have,
by \cref{lemma:events-connection}, that
all $\bm{b}\in B$ are in $B'$, i.e., $B\subseteq B'$.
By construction, 
since $A_2 \subseteq \Nc'(\bm{b}')$, in $\Rc(B')$
there are edges connecting $\bm{b}'\leftrightarrow \bm{a}$ for all $\bm{a}\in A_2$;  these are edges connecting $\overline{B}\leftrightarrow\overline{\Nc'(B)}$ in $\Rc(B')$. Then, by \cref{lemma:graph-connection}.1, we know for any point in $A_2\subseteq \overline{\Nc'(B)}$, there exists a path in $\Rc(B)$, and thus also in $\Rc(B')$, that connects to $\overline{\bm{b}},$ for all $\overline{\bm{b}}\in \overline{B}$. Note that since $A_2\subseteq \Nc'(B')$, we have,
by \cref{lemma:events-connection}, that 
$\overline{\bm{b}}\in \overline{B}$ implies 
$\overline{\bm{b}}\in B'$ so $\overline{B}\subseteq B'$. Thus we have $\overline{B}\cup B\subseteq{B'}$, so $B'=\cal{B}$ which leads to a trivial inequality.

\item[Case ii.] Let $B'\subset B$. 
To show that $\Rc(B) \not\subseteq \Rc(B')$, it is sufficient to show $\Rc(B)$ contains edges $\overline{B'}\leftrightarrow \Nc'(B')$ which are by construction not in $\Rc(B')$. Since $B'\subset B$, there exists $\bm b$ such that 
$\bm{b}\in B \cap \overline{B'}$. 

If $B'$ does not contain points with all levels of $X$ present in $B$, then there exists $\bm{b}\in B \cap \overline{B'}$ with point $(X=x', Y=y')$ where there is a point with $X=x'$ in $B$ but no point with $X=x'$ is present in $B'$. Let $\left(X=x_1, Y=y^1\right)$ be a point in $B'$. Therefore, there exists a point $\left(Y(x_1)=y^{1}, \dots, Y(x')=y',\dots\right)$ in $\Nc'(\bm{b})\subset \Nc'(B)$, which is also in $\Nc'(B')$. Hence, we have an edge in $\Rc(B)$, $(X=x', Y=y')\leftrightarrow \left(Y(x_1)=y^{1},\dots, Y(x')=y',\dots\right)$, that connects $\overline{B'}$ and $\Nc'(B')$ which is not in $\Rc(B')$. 

Now suppose $B'$ contains points with all levels of $X$ present in $B$. Let $\bm{b}_2
= \left(X\!=\!x_2, Y\!=\!y^{2}\right)$ with $\bm{b}_2
\in B\cap \overline{B'}$. Since $B$ contains at least two $X$-levels and $B'$ contains all levels of $X$ in $B$, let $(X=x^\dagger, Y=y^\dagger)\in B'$ such that $x^\dagger\neq x_2$. Therefore, we have $\left(Y(x_2)=y^{2}, Y(x^\dagger)=y^\dagger,\dots\right)\in \Nc'(\bm{b}_2)\subset \Nc'(B)$. Since $(X=x^\dagger, Y=y^\dagger)\in B'$, we also have $\left(Y(x_2)=y^{2}, Y(x^\dagger)=y^\dagger,\dots\right)\in \Nc'(B')$. Hence, we have an edge in $\Rc(B)$, $\bm{b}_2= \left(X=x_2, Y=y^{2}\right)\leftrightarrow \left(Y(x_2)=y^{2}, Y(x^\dagger)=y^\dagger,\dots\right)$ that connects $\overline{B'}$ and $\Nc'(B')$ but which is thus not in $\Rc(B')$.
\end{description}

\end{proof}

\subsection{Proof of (II)}

\begin{proof}
Since $|B|=1$, suppose
without much loss of generality that $B=\{(X=1, Y=1)\}$. It follows that $$\Nc'(B)=\left\{\left(Y(x_1)=1, Y(x_2)=y^{2},\dots, Y(x_K)=y^{K}\right):y^{2},\dots, y^{K}\in [M] \right\}.$$ 

Suppose for a contradiction that there exists a set $B'$, $\emptyset \subset B' \subset \mathcal{B}$ such that $\Rc(B') \supseteq \Rc(B)$. 
Since $B' \neq B$, $B'$ contains at least one other point not in $B$.
We first show that $B'$ contains a point $(X=i,Y=y')$ where $i\neq 1$. Suppose, for a contradiction, that $B'$ only contains points with $X=1$.
Again without much loss of generality, suppose $B'$ contains the point $(X=1, Y=2)$. Let $x^*\in [K]\setminus \{1\}$ be any other level of $X$. Then in $\Rc(B)$ there is an edge $(X=x^*, Y=y^*)\leftrightarrow(Y(x_1)=2, \ldots, Y(x^*)=y^*,\dots)$ connecting $\overline{B}$ to $\overline{\Nc'(B)}$. However, since 
$B'$ only contains events with $X=1$, 
$(X=x^*, Y=y^*)\notin B'$. Since $(Y(x_1)=2, \ldots, Y(x^*)=y^*,\dots)\in\Nc'(B')$, this edge is not in $\Rc(B')$, which  contradicts $\Rc(B') \supseteq \Rc(B)$. Hence, we know $B'$ contains a point $(X=i,Y=y')$ where $i\neq 1$ and again without much loss of generality, we suppose $i=2$ so $(X=2, Y=y')\in B'$. 



Let $B_1=\{(X=1, Y=1),\dots,(X=1, Y=M)\}=\{\bm b_1, \dots, \bm b_M\}$. Note that $\Nc'(B_1)={\cal A}$. We first show for all points in $B_1$, all edges connecting $\bm{b}_i\leftrightarrow \Nc'(\bm{b}_i)$ for all $i\in[M]$ are in $\Rc(B)$ and thus, by hypothesis, are also in $\Rc(B')$. By definition, $B=\{(X=1, Y=1)\}=\{\bm b_1\}$ is connected to all of its neighbors in $\Rc(B)$ since these are edges connecting $B\leftrightarrow \Nc'(B)$. In addition, we know that $\bm{b}_2,\dots, \bm{b}_M\in B_1$ are connected to all of their neighbors in $\Rc(B)$ since, by definition of coherence \eqref{eqs:coherence}, 
$\Nc'(\bm{b}_i)\cap \Nc'(\bm{b}_j)=\emptyset$ for all $i\neq j$ and thus, since $B=\{\bm{b}_1\}$ these are edges connecting $\overline{B}\leftrightarrow \overline{\Nc'(B)}$.

We next show $\{(X=2, Y=y')\}\cup B_1\subseteq B'$. Firstly, we know that in $\Rc(B')$, there exists a path $(X=2, Y=y')\leftrightarrow (Y(x_1)=1, Y(x_2)=y', \dots)\leftrightarrow (X=1, Y=1)$, where the first edge is connecting $B'\leftrightarrow\Nc'(B')$ and the second edge is by $\Rc(B')\supseteq\Rc(B)$. Also, we know $\bm{b}_2,\dots,\bm b_M\in \overline{B}$ are connected to $(X=2, Y=y')\in \overline{B}$ by \cref{lemma:graph-connection}.1 and \cref{remark:graph-connection}.
Hence, we know there exists a path in $\Rc(B')$ connecting $(X=2, Y=y')$ to all points in $B_1$. Since $(X=2, Y=y')\in B'$, it follows 
by \cref{lemma:events-connection}
that $\{(X=2, Y=y')\}\cup B_1\subseteq B'$. Hence, we have $B_1\subseteq B'$. By the contrapositive of \cref{lemma:A-bar-multix}, $B'$ corresponds to a trivial inequality with $\Nc'(B')={\cal A}$, which is a contradiction. 


\end{proof}
\vspace{-1cm}

\subsection{Proof of (III)}

\begin{proof}
Let $|B|=r>1$, and suppose $B$ only contains points that have the same level of $X=x$. 
Without much loss of generality, assume $B=\{(X=x_1, Y=y^{1}), (X=x_1, Y=y^{2}),\ldots (X=x_1, Y=y^r)\}=\{\bm{b}_1, \bm{b}_2, \ldots \bm{b}_r\}$. We know 
by definition of coherence \eqref{eqs:coherence},
$\Nc'(\bm{b}_i)\cap \Nc'(\bm{b}_j)=\emptyset$ for $i\neq j$. Let $B'=\{\bm{b}_1\}$. It is sufficient to show that $\Rc(B) \subseteq \Rc(B')$. Recall that $\Rc(B)$ contains edges connecting $B$ and $\Nc'(B)$ as well as edges connecting $\overline{B}$ and $\overline{\Nc'(B)}$. 

First, consider edges connecting $B$ and $\Nc'(B)$. We have $\bm{b}_1\leftrightarrow \Nc'(\bm{b}_1)$ in $\Rc(B')$ since they are edges connecting $B'$ and $\Nc'(B')$. We also have edges $\bm{b}_2\leftrightarrow \Nc'(\bm{b}_2)$ in $\Rc(B')$ since $\bm{b}_2 \in \overline{B'}$, $\Nc'(\bm{b}_2)\subset  \overline{\Nc'(B')}$ since $\Nc'(\bm{b}_1)\cap \Nc'(\bm{b}_2)=\emptyset$, and the same argument can be repeated for edges $\bm{b}_3\leftrightarrow \Nc'(\bm{b}_3)$, etc. Therefore, all edges connecting $B\leftrightarrow \Nc'(B)$ are in $\Rc(B')$. 

Now consider edges connecting $\overline{B}$ and $\overline{\Nc'(B)}$. Since $B'\subset B$ and $\Nc'(\bm{b}_i)\cap \Nc'(\bm{b}_j)=\emptyset$ for $i\neq j$, we have ${\Nc'(B')}\subset{\Nc'(B)}$, and thus $\overline{\Nc'(B)}\subset\overline{\Nc'(B')}$. Note that we also have $\overline{B}\subseteq\overline{B'}$. Hence, the edges connecting $\overline{B}\leftrightarrow\overline{\Nc'(B)}$ are also edges connecting $\overline{B'}\leftrightarrow\overline{\Nc'(B')}$ and are thus in $\Rc(B')$. Therefore, all edges in $\Rc(B)$ are in $\Rc(B')$.
\end{proof}

\vspace{-0.5cm}
\section{Convex program for statistical inference}\label{supp:algo-1}\vspace{-0.2cm}

We present the convex optimization program for statistical inference.

\begin{algorithm}[H]
\caption{Convex program for statistical inference} \label{alg:convex}
\begin{algorithmic}[1]
  \REQUIRE Linear functionals $\{f_j: j \in [J]\}$ of the counterfactual distribution; Matrices $H'$ and $H$; Confidence level $\alpha$;  Empirical probabilities $\hat{p}_{z}\equiv \hat{P}(X,Y\mid Z=z)$ for $z \in [Q]$; Sample size $n_z$ of instrument arm $z \in [Q]$. 
  \STATE \textbf{Variables:} 
  \[\begin{aligned}
  &p_z := P(X,Y\mid Z=z)\in \mathbb{R}^{KM}, \quad z \in [Q]\\
  &p' := P'(Y(x_1),\dots, Y(x_K))\in \mathbb{R}^{M^K}
  \end{aligned}\]
  
  \STATE Determine $t_{\alpha}$ from \cref{eqs:tail-bound} with line search.
  
  \STATE For each $j \in [J]$, solve the following convex program:
  \[
    \begin{aligned}
      \quad & l_j=\min f_j(p'), \quad u_j=\max f_j(p') \\
      \text{s.t.}\quad & -H p_{z}+H' p' \le 0,\quad z \in [Q]\\
                      & \sum_{z=1}^Q n_z \KL(\hat{p}_z \| p_z)\leq t_\alpha,\\
                      & p_z \in \Delta^{KM-1},\quad z \in [Q]\\
                      & p' \in \Delta^{M^K-1}.
    \end{aligned}
  \]
  \RETURN Confidence intervals $[l_j, u_j]$ for $j \in [J]$ (if the feasible region is empty, let $l_j=+\infty$ and $u_j=-\infty$).  
\end{algorithmic}
\end{algorithm}

\section{V- and H-representation of the IV model} \label{sec:cdd}

\cref{theo:redundancy} describes the polytope that characterizes the IV model in the H-representation, namely as the intersection of a finite number of half-spaces. The same polytope can also be described in the V-representation as the convex hull of a finite number of vertices (i.e., extreme points). The vertices, as given by \cref{prop:redundancy}, correspond to the edges of a relation $\Rc \subset \mathcal{A} \times \mathcal{B}$, where $\mathcal{A} = [M]^K$ and $\mathcal{B} = [K] \times [M]$. In what follows, we demonstrate how to obtain the V-representation and convert it to an H-representation. This gives the matrices $H, H'$ used in \cref{alg:convex}.

\begin{example}[Binary IV]
Consider the setting with a binary treatment $X$ and outcome $Y$. Let $Z$ be fixed to a level $z$. Consider a vector in $\mathbb{R}^{8}$ with coordinates defined as follows:
\begin{itemize}
\item the first four coordinates describe the principal strata probabilities $P(Y(x_1)=1, Y(x_2)=1)$, $P(Y(x_1)=1, Y(x_2)=2)$, $P(Y(x_1)=2, Y(x_2)=1)$, $P(Y(x_1)=2, Y(x_2)=2)$, 
\item the next four coordinates describe the observed distribution in instrument arm $z$: $P(X=1, Y=1\mid Z=z)$, $P(X=1, Y=2\mid Z=z)$, $P(X=2, Y=1\mid Z=z)$, $P(X=2, Y=2\mid Z=z)$.
\end{itemize}

The V-representation can be obtained by considering degenerate distributions that assign probability one to a single principal stratum probability and to an observed value coherent with that stratum, and then assign 0 to all other coordinates. Since $X$ is binary, there are two observed outcomes coherent with each principal stratum, and the resulting V-representation is an $8 \times 8$ binary matrix:
\[
\renewcommand{\arraystretch}{1.2} 
\setlength{\arraycolsep}{8pt}      
\begin{bmatrix}
1 & 0 & 0 & 0 & 1 & 0 & 0 & 0 \\[-2pt]
0 & 1 & 0 & 0 & 0 & 1 & 0 & 0 \\[-2pt]
0 & 0 & 1 & 0 & 1 & 0 & 0 & 0 \\[-2pt]
0 & 0 & 0 & 1 & 0 & 1 & 0 & 0 \\[-2pt]
1 & 0 & 0 & 0 & 0 & 0 & 1 & 0 \\[-2pt]
0 & 1 & 0 & 0 & 0 & 0 & 1 & 0 \\[-2pt]
0 & 0 & 1 & 0 & 0 & 0 & 0 & 1 \\[-2pt]
0 & 0 & 0 & 1 & 0 & 0 & 0 & 1
\end{bmatrix},\]
where the first row above encodes the edge between the principal stratum $(Y(x_1)=1, Y(x_2)=1)$ (``always recover'') and the coherent observation $(X=1,Y=1)$. With polyhedral computation tools such as \texttt{cddlib} \citep{cddlib}, this V-representation can be converted to the following H-representation:
\[
\renewcommand{\arraystretch}{1.2} 
\setlength{\arraycolsep}{8pt}      
[-H', H] = \begin{bmatrix*}[r]
  1  &  1  &  0  &  0  &  0  &  0  & -1  &  0 \\[-2pt]
  0  &  0  & -1  &  0  &  1  &  0  &  0  &  1 \\[-2pt]
  0  &  1  &  0  &  1  &  0  & -1  &  0  &  0 \\[-2pt]
  1  &  0  &  1  &  0  & -1  &  0  &  0  &  0 \\[-2pt]
  0  & -1  &  0  &  0  &  0  &  1  &  1  &  0 \\[-2pt]
 -1  &  0  &  0  &  0  &  1  &  0  &  1  &  0 \\[-2pt]
  1  &  1  &  1  &  0  & -1  &  0  & -1  &  0 \\[-2pt]
 -1  & -1  &  0  &  0  &  1  &  1  &  1  &  0 
\end{bmatrix*}, \qquad \]
whose first four columns make $-H'$ and the last four columns make $H$. These matrices, which do not depend on $z$, encode the inequalities 
\[ -H p_{z}+H' p' \le 0,\quad z \in [Q] \]
in \cref{alg:convex}.
\end{example}

\medskip Following the example, we use \texttt{cddlib} to directly compute the number of non-redundant inequalities. \cref{tab:cdd-check} lists the number of inequalities per instrument arm
\[ r / Q = (2^M-1)^K-K(2^M-M-2)-1 \]
under various settings of $(K,M)$. The bold entries, which can be computed with \texttt{cddlib} relatively quickly, have been verified.\footnote{\texttt{cddlib} returns $r/Q+KM+M^K$ inequalities and two equalities. Specifically, there are $KM$ inequalities for $P(X=x, Y=y)\geq 0$, $x\in[K], y\in[M]$, as well as $M^K$ inequalities for $P(Y(x_1)=y^1, \ldots, Y(x_K)=y^K)\geq 0$, $y^1,\dots,y^K\in[M]$. The two equalities encode $\sum_{x=1}^K\sum_{y=1}^MP(X=x, Y=y)=1$ and $\sum_{y^1=1}^M\cdots\sum_{y^K=1}^MP(Y(x_1)=y^1, \ldots, Y(x_K)=y^K)=1$.}

\begin{table}[h]
\caption{Number of non-redundant inequalities per instrument arm under different $M$ (levels of $Y$) and $K$ (levels of $X$). The bold numbers have been verified by \texttt{cddlib}.}
\label{tab:cdd-check}
\centering
\begin{tabular}{crrrrr}
\hline
 & $M=2$ & $M=3$ & $M=4$ & $M=5$ & $M=6$ \\ \hline
 $K=2$ & {\bf 8} & {\bf 42} & {\bf 204} & {\bf 910} & {\bf 3856} \\ \hline
 $K=3$ & {\bf 26} & {\bf 333} & {\bf 3344} & 29715 & 249878 \\ \hline
 $K=4$ & {\bf 80} & {\bf 2388} & 50584 & 923420 & 15752736 \\ \hline
 $K=5$ & {\bf 242} & 16791 & 759324 & 28629025 & 992436262 \\ \hline
\end{tabular}
\end{table}

\section{Run time for the Minneapolis Domestic Violence Experiment results}\label{sec:runtime}

\Cref{tab:analysis_runtime} reports the time (in seconds on an ARM64 personal computer) taken to compute the results presented in \cref{tab:analysis}. The run time for the plug-in bounds depends on $M$ (levels of $Y$), $K$ (levels of $X$), and $Q$ (levels of $Z$), whereas the run time for the confidence intervals further depends on the sample size $n$ and the confidence level $\alpha$.

\begin{sidewaystable}[p]
\caption{Run time (in seconds) for the Minneapolis Domestic Violence Experiment results}\label{tab:analysis_runtime}
\centering
\small
\begin{tabular}{cccccccc} 
\toprule
& \multirow{2}{*}{\textbf{}}  & \multicolumn{2}{c}{\textbf{Advise vs. Arrest}}     & \multicolumn{2}{c}{\textbf{Separate vs. Arrest}}   & \multicolumn{2}{c}{\textbf{Separate vs. Advise}} \\ \hline  Researcher &   & \textbf{Plug-in} & \textbf{CI (95\%)} & \textbf{Plug-in} & \textbf{CI (95\%)} & \textbf{Plug-in} & \textbf{CI (95\%)} \\ \hline
1  & \textbf{All data}   & 0.416	& 5.594 &	0.431 &	5.319	& 0.357	& 5.367  \\ \hline
2   & \textbf{\begin{tabular}[c]{@{}c@{}}Delete $X\!=\!\text{Sep}$,\\ $X\!=\!\text{Adv}$, \\  or $X\!=\!\text{Arr}$ \end{tabular}}  & 0.395	& 4.951	& 0.303	& 4.934	& 0.329	& 4.700  \\ \hline
3    & \textbf{\begin{tabular}[c]{@{}c@{}}Binary IV model: \\ Delete $X\!=\!\text{Sep}$ and $Z\!=\!\text{Sep}$,\\ $X\!=\!\text{Adv}$ and $Z\!=\!\text{Adv}$,\\  or $X\!=\!\text{Arr}$ and $Z\!=\!\text{Arr}$\end{tabular}} & 0.351	& 5.340	& 0.262	& 5.301	&  0.268 &	5.309       \\ \hline
\multirow{3}{*}{4} & \textbf{Delete $Z\!=\!\text{Arr}$}  & 0.271 &	4.909	& 0.362	& 4.775	& 0.360	& 4.780 \\ & \textbf{Delete $Z\!=\!\text{Adv}$}   & 0.278 &	4.732	& 0.270	& 4.738	& 0.268 &	5.015 \\ & \textbf{Delete $Z\!=\!\text{Sep}$}  & 0.352 &	4.904	& 0.275	& 4.773 &	0.282	& 4.747  \\ \bottomrule
\end{tabular}
\end{sidewaystable}

\section{Falsification of the IV model} \label{sec:feasibility}
Given the observed distribution $P(X,Y \mid Z=z)$ across instrument arms $z \in [Q]$, one can conduct a falsification test of the IV model by checking feasibility of the inequalities in \cref{theo:redundancy} (ignoring sampling variability). Interior-point methods take time that is polynomial in $Q$ to check feasibility \citep{nemirovski2008interior}. For illustration, we simulate $P(X,Y \mid Z=z)$ for each $z$ from $\text{Dirichlet}(1,\dots,1)$ under $M=2$. \cref{fig:sim} shows the proportion of instances that would falsify the IV models as $K$ and $Q$ vary. Because the inequalities take the form of an intersection across instrument arms, as one would expect, the proportion grows with the number of instrument arms.

\begin{figure}[!htb]
    \centering
    \includegraphics[width=0.7\textwidth]{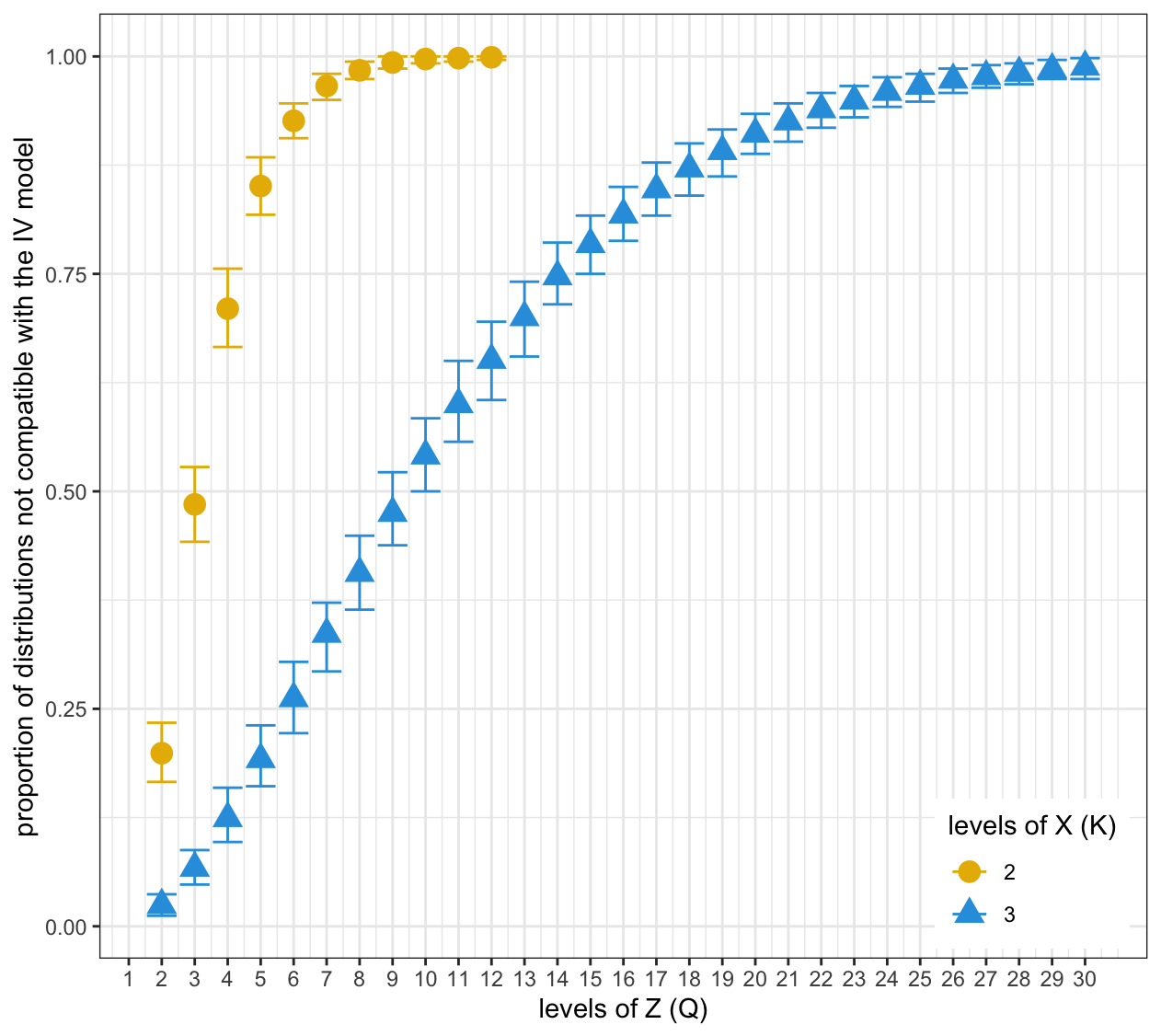}
    \caption{The proportion of instances that would falsify the IV models when the observed distribution $P(X,Y \mid Z=z)$ is generated from $\text{Dirichlet}(1,\dots,1)$ for each $z$. }
    \label{fig:sim}
\end{figure}

In fact, instead of checking that the intersection of all arms is non-empty, one can check that the intersection of every combination of $M^K$ arms is non-empty, as ensured by the next theorem. This can be convenient when $M^K \ll Q$.

\begin{theorem}[Helly's theorem]
Let $C_1, \dots, C_m$ be a collection of convex subsets of $\mathbb{R}^d$ with $m \geq d+1$. Then, it holds that 
\[ \bigcap_{i=1}^m C_i\neq \emptyset \quad \iff \quad \bigcap_{i \in I} C_i \neq \emptyset, \; \forall I \subset [m], |I| = d+1. \]
\end{theorem}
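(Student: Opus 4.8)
The plan is to prove the nontrivial ($\Leftarrow$) implication by induction on $m$, with Radon's theorem supplying the key geometric input; the forward implication ($\Rightarrow$) is immediate, since the full intersection is contained in each of its sub-intersections. For the base case $m=d+1$ the hypothesis and the conclusion coincide, so there is nothing to prove.

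For the inductive step, I would assume the statement holds for every collection of $m-1$ convex sets and take $C_1,\dots,C_m$ with $m\geq d+2$ satisfying the hypothesis that every $(d+1)$-fold intersection is non-empty. First, for each $j\in[m]$, I apply the inductive hypothesis to the $m-1$ sets $\{C_i : i\neq j\}$: each $(d+1)$-fold sub-intersection of these is also a $(d+1)$-fold intersection of the original family and hence non-empty, so $\bigcap_{i\neq j} C_i\neq\emptyset$. I then pick a witness point $p_j$ in this intersection.

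This produces $m\geq d+2$ points $p_1,\dots,p_m\in\mathbb{R}^d$, and the crux is Radon's theorem: any $d+2$ points in $\mathbb{R}^d$ admit a partition of their index set into disjoint parts $I_1,I_2$ with $\mathrm{conv}\{p_j : j\in I_1\}\cap\mathrm{conv}\{p_j : j\in I_2\}\neq\emptyset$. This itself follows from a short linear-algebra argument, which I would either cite or include: $d+2$ points in $\mathbb{R}^d$ are affinely dependent, and splitting the dependence relation according to the sign of its coefficients and normalizing yields the two parts together with a common convex combination. Let $q$ be a point in the intersection of the two convex hulls.

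Finally I would verify that $q\in\bigcap_{i=1}^m C_i$. Fix any $i\in[m]$; it lies in exactly one part, say $I_1$. Then every index $j\in I_2$ satisfies $j\neq i$, so by construction $p_j\in\bigcap_{\ell\neq j} C_\ell\subseteq C_i$. Hence $\mathrm{conv}\{p_j : j\in I_2\}\subseteq C_i$ by convexity of $C_i$, and since $q$ lies in this hull, $q\in C_i$. As $i$ was arbitrary, $q$ belongs to every $C_i$, completing the induction. The main obstacle is Radon's theorem: once it is in hand, the partition bookkeeping in the last step is routine, so the essential content of the proof is the affine-dependence argument behind Radon (or a clean citation thereof).
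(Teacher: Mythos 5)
Your proof is correct, but there is no paper proof to compare it against: the paper states this result as the classical Helly's theorem, without proof, purely to justify checking feasibility over subfamilies of $KM$ instrument arms rather than all $Q$ arms at once. What you give is the standard textbook argument --- forward direction trivial; reverse direction by induction on $m$, using the inductive hypothesis to produce witnesses $p_j \in \bigcap_{i \neq j} C_i$, then Radon's theorem to get a point $q$ in the intersection of the convex hulls of the two parts, and finally the observation that for each $i$ the part not containing $i$ consists only of witnesses $p_j$ with $j \neq i$, each lying in $C_i$, so $q \in C_i$ by convexity --- and it is sound. Two small points to tidy. First, you state Radon's theorem for exactly $d+2$ points but then partition the full index set $[m]$ with $m$ possibly larger; either run the affine-dependence argument on all $m$ points (any $m \geq d+2$ points are affinely dependent, and the sign-splitting works verbatim, with zero-coefficient indices assigned to either part arbitrarily), or apply Radon to just $d+2$ of the points and note that for any index $i$ outside the partitioned set \emph{both} hulls lie in $C_i$, so the verification still goes through. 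Second, a notational quirk of the paper rather than of your argument: the paper's conventions define $\subset$ as \emph{proper} inclusion, under which the condition ``$\forall I \subset [m],\ |I| = d+1$'' is vacuous when $m = d+1$ and the stated equivalence would fail in that edge case; your reading, in which the hypothesis ranges over all subsets of size $d+1$ including $I = [m]$ (making the base case trivial), is clearly the intended one.
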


\section{Discussion of inequalities in \citet{Russell}} \label{appendix:russell}
\citet{Russell} presented ``sharp bounds'' on any continuous functional of the joint counterfactual distribution under the IV model $\calM_1$.
Extending the work of \citet{beresteanu}, \citet{Russell} used results in \citet{LuoWang}
in an attempt to further eliminate the redundant inequalities implied by Artstein's theorem and obtain constraints in the so-called ``exact core determining class'' defining the joint counterfactual distribution. The inequalities in the ``exact core determining class'' bound the counterfactual probabilities 
\begin{equation} \label{ineq:russel}
\begin{cases} 
    P(Y(x_{s_1})=y_1, Y(x_{s_2})=y_2, \ldots, Y(x_{s_K})\in \mathcal{Y}),\ \forall \mathcal{Y}\subseteq [M], & K>2 \text{ or } M \leq K \\
    P(Y(x_i)=y_i, Y(x_j)\in \mathcal{Y}'),\ \forall \mathcal{Y}'\subseteq [M], &K=2 \text{ and } M>K
  \end{cases},
\end{equation}
where $(s_1,\dots, s_K)$ is any permutation of $(1,\dots, K)$. Note that when $K=M=2$, the inequalities in (\ref{ineq:russel}) are the same as \eqref{theo:1cont} and the result in \citet{RichardsonRobins}. However, they deviate from our results when $K\neq 2$ or $M\neq 2$: \eqref{ineq:russel} is a strict subset of the non-redundant constraints given by our \cref{theo:redundancy}. 

As an illustrative example, consider the case with $K=2, M=3$, and $Q=2$. In this case, the set of non-redundant inequalities consists of the following four groups:
\begin{multline} \label{eqs:set-1}
P(Y(x_i)=y_1^{x=i})+P(Y(x_i)=y_2^{x=i})\leq 1-P(X=i, Y=y_3^{x=i}\mid Z=z), \\
y_1^{x=i}\neq y_2^{x=i}\neq y_3^{x=i},
\end{multline}
\begin{multline} \label{eqs:set-2}
P(Y(x_1)=y_1^{x=1}, Y(x_2)=y_1^{x=2})+P(Y(x_1)=y_1^{x=1}, Y(x_2)=y_2^{x=2})\\
+P(Y(x_1)=y_2^{x=1}, Y(x_2)=y_1^{x=2})+P(Y(x_1)=y_2^{x=1}, Y(x_2)=y_2^{x=2})\\
\leq 1-P(X=1, Y=y_3^{x=1}\mid Z=z)-P(X=2, Y=y_3^{x=2}\mid Z=z), \\
y_1^{x=1}\neq y_2^{x=1}\neq y_3^{x=1}, y_1^{x=2}\neq y_2^{x=2}\neq y_3^{x=2},
\end{multline}
\begin{multline} \label{eqs:set-3}
P(Y(x_i)=y^{i}, Y(x_j)=y^{j})+P(Y(x_i)=y^{i}, Y(x_j)=\tilde{y}^{x=j})\\
\leq P(X=i, Y=y^{i}\mid Z=z)+P(X=j, Y=y^{j}\mid Z=z)+P(X=j, Y=\tilde{y}^{x=j}\mid Z=z), \\
y^{j}\neq \tilde{y}^{x=j},
\end{multline}
and
\begin{multline} \label{eqs:set-4}
P(Y(x_1)=y^{1}, Y(x_2)=y^{2})\leq P(X=1, Y=y^{1}\mid Z=z)+P(X=2, Y=y^{2}\mid Z=z).
\end{multline}
Here the inequalities (\ref{eqs:set-2}), (\ref{eqs:set-3}), (\ref{eqs:set-4}) correspond to \cref{theo:redundancy}'s Condition 1, while (\ref{eqs:set-1}) corresponds to Condition 2.
Owing to symmetry, for each level of $z$ there are, respectively,
$3\cdot 2=6$, $3\cdot 3 = 9$, $3\cdot 3\cdot 2=18$, $3\cdot 3 =9$ inequalities in each group (\ref{eqs:set-1})--(\ref{eqs:set-4}). Since here $Z$ is binary, 
our \cref{theo:redundancy} gives $42 \times 2=84$ non-redundant inequalities in total.  However, since only \eqref{eqs:set-3} and \eqref{eqs:set-4} are in the ``exact core determining class'' given by \citet{Russell}, his set contains only  $27 \times 2=54$ inequalities. 

%

Thus, Russell's ``exact core determining class'' is an incomplete description of the IV model. Consequently, a researcher using his set of inequalities may (i) fail to detect observed distributions that violate the IV model, and (ii) fail to provide a sharp bound on the functionals of the joint counterfactual distribution. To illustrate (i), the observed distribution in \cref{tab:Russeldata1} violates the IV model but cannot be detected by Russell's inequalities. For (ii), \cref{tab:sharpnesscompare} compares bounds on several functionals for an observed distribution compatible with the IV model given by \cref{tab:Russeldata2}.

\begin{table}[H]
\caption{An observed distribution that violates the IV model}
\label{tab:Russeldata1}
\resizebox{\textwidth}{!}{
\begin{tabular}{ccccccc}
\hline
 & $P(X=1, Y=1\mid Z)$ & $P(X=1, Y=2\mid Z)$ & $P(X=1, Y=3\mid Z)$ & $P(X=2, Y=1\mid Z)$ & $P(X=2, Y=2\mid Z)$ & $P(X=2, Y=3\mid Z)$ \\ \hline
$Z=1$ & 0.43 & 0.05 & 0.07 & 0.10 & 0.20 & 0.15 \\ \hline
$Z=2$ & 0.01 & 0.36 & 0.40 & 0.18 & 0.03 & 0.02\\ \hline
\end{tabular}}
\end{table}

\begin{table}[H]
\caption{An observed distribution that is compatible with the IV model}
\label{tab:Russeldata2}
\resizebox{\textwidth}{!}{
\begin{tabular}{ccccccc}
\hline
 & $P(X=1, Y=1\mid Z)$ & $P(X=1, Y=2\mid Z)$ & $P(X=1, Y=3\mid Z)$ & $P(X=2, Y=1\mid Z)$ & $P(X=2, Y=2\mid Z)$ & $P(X=2, Y=3\mid Z)$ \\ \hline
$Z=1$ & 0.12 & 0.21 & 0.30 & 0.15 & 0.08 & 0.14 \\ \hline
$Z=2$ & 0.08 & 0.44 & 0.14 & 0.25 & 0.03 & 0.06\\ \hline
\end{tabular}}
\end{table}

\begin{table}[H]
\caption{Bounds on functionals of the counterfactual distribution}
\label{tab:sharpnesscompare}
\resizebox{\textwidth}{!}{
\begin{tabular}{lccccc}
\hline
 & $P(Y(x_1)=2, Y(x_2)=1)$ & $P(Y(x_2)=1)$ & $P(Y(x_1)=1)+P(Y(x_1)=2)$ &  $P(Y(x_1)=1)-P(Y(x_1)=3)$ \\ \hline
Our bound & [0.01, 0.36] & [0.26, 0.78] & [0.56, 0.70] & [-0.32, -0.04]\\ \hline
Russell's bound & [0, 0.36] & [0.17, 0.805] & [0.45, 1.00] & [-0.55, 0.44]\\ \hline
\end{tabular}}
\end{table}

\section{Differences between KM's sets of inequalities and those in this paper}\label{supp:KM}

\citet{KedagniMourifie} (hereafter, KM)
also state inequalities relating observed and potential outcome distributions under various IV models. In this section we compare their results to those in this paper.\footnote{
Note that KM also consider the case where the instrument ($Z$) and possibly the outcome ($Y$) are continuous, which we do not consider here.}

At first glance, the inequalities in \citet{KedagniMourifie}, Supplement p.3,   and those in Theorems \ref{theorem:main-result} and \ref{theo:redundancy} appear similar. Both take the form of upper bounds on probabilities of events involving potential outcomes. 

However, there is a crucial difference in the class of events for which bounds are given, and hence the set of inequalities we consider, versus those considered by K\'{e}dagni and Mourifi\'{e} [KM], Supplement p.3. Specifically, the inequality given there by KM {\bf only} bounds events of the form:
\vskip-28pt
\begin{equation}
P(Y(x\!=\!1) \in A_1,\ldots , Y(x\!=\!K) \in A_K), \label{eq:event}
\end{equation}
where the sets $A_1,\ldots , A_K$ are all members of a {\bf single partition} of the state space ${\cal Y}$ for the outcome $Y$.
In contrast, we allow each of these sets to be arbitrary (non-empty) members of the {\bf power set} for ${\cal Y}$. Thus, every instance of KM's inequality is an instance of ours, but the reverse is not true.\footnote{The use of sets arising from a single partition is integral to the development in Section 2 of KM's supplement; see for example their equation (12), which would become trivially true if instead summation were taken over all non-empty subsets.}
Furthermore, it can be shown that even in the simplest case of binary treatment, the inequalities given by KM are, in general, insufficient to obtain sharp bounds on treatment effects (if $|{\cal Y}|>2$) and insufficient to characterize the set of observable distributions\footnote{
Though KM discuss both treatments and outcomes with more levels, the only setting for which they prove a sharp characterization is when both treatment and outcome are binary.} (if both $|{\cal Y}|, |{\cal Z}|>2$); see \cref{tab:1} below and the subsections for detailed examples.\footnote{In their methodological development KM require the analyst to choose a particular partition, and note that this choice will affect the sharpness of the bounds. However, even if one combines {\bf all} the inequalities arising from KM's inequality considering {\bf all} possible partitions, this is still insufficient.}

\citet{beresteanu} [BMM] apply Artstein's theorem to derive a set of inequalities that includes ours.
However, BMM quantify over {\bf all} subsets of ${\cal Y}^K$.
Consequently, their set contains super-exponentially many more inequalities; see also \citet{chesher:rosen:2017}. This renders the BMM characterization computationally infeasible for practical purposes. It also follows from our Theorem \ref{theo:redundancy} that none of these additional inequalities are required.

\begin{table}
\[
\begin{array}{c c c p{2cm} p{2.5cm} p{2cm} p{2cm}}
\hline
&&&
\multicolumn{2}{c}{\hspace{-1cm}\text{This paper}} 
& \multicolumn{2}{c}{\hspace{-0.6cm}\text{KM}} \\
|{\cal Z}| & |{\cal X}| & |{\cal Y}| &  
\text{Falsification} & \text{ATE Bounds} &
\text{Falsification} & \text{ATE Bounds}
\\
\hline
\text{any} 
& 2 & 2 &  \text{Sharp} 
& \text{Sharp} & \text{Sharp} & \text{Sharp} \\

2 & 2 & 3 & \text{Sharp} 
& \text{Sharp} & \text{Sharp} & \text{Not Sharp} \\

3 & 2 & 3 &  \text{Sharp} 
& \text{Sharp} & \text{Not Sharp} & \text{Not Sharp} \\

4 & 2 & 3 
& \text{Sharp} 
& \text{Sharp} & \text{Not Sharp} & \text{Not Sharp} \\

\hline
\end{array}
\]
\caption{{Results for Binary Treatment $X$ comparing the inequalities given by this paper and those considered by
K\'{e}dagni and Mourifi\'{e} (KM).} For Falsification, `Sharp' implies that the corresponding set of inequalities characterizes the model for the observables, while `Not Sharp' implies that there is an observed distribution incompatible with the model yet for which there exists a potential outcome distribution satisfying all given inequalities. Similarly, for Bounds, `Sharp' implies that every ATE value in the resulting interval may be achieved by some potential outcome distribution compatible with an observed distribution that is in the model, while `Not Sharp' implies that this will not always be the case.
Examples showing the lack of sharpness are given in \cref{app:counterexample:4arm,app:counterexample:3arm,app:counterexample:2arm}.
\label{tab:1} }
\end{table}

\begin{table}
{\small
\resizebox{\linewidth}{!}{
\begin{tabular}{crrrr}
\hline
 & $|{\cal Y}|=2$ & $|{\cal Y}|=3$ & $|{\cal Y}|=4$ & $|{\cal Y}|=5$ \\ \hline
 $|{\cal X}|\!=\!2$ & {\red 4+4}/{8}/{\blue 15} & {\red 12+18}/{42}/{\blue 511} & {\red 28+64}/{204}/{\blue 65,535} & {\red 60+210}/{910}/{\blue 33,554,431}  \\ \hline
 $|{\cal X}|\!=\!3$ & {\red 6+8}/{26}/{\blue 255} & {\red 18+48}/{333}/{\blue 134,217,727} & {\red 42+224}/{3,344}/{\blue $2^{64}\!-\!1$} & {\red 90+960}/29,715/{\blue $2^{125}\!-\!1$}  \\ \hline
 $|{\cal X}|\!=\!4$ & {\red 8+16}/{80}/{\blue 65,535} & {\red 24+126}/{2,388}/{\blue $2^{81}\!-\!1$} & {\red 56+748}/50,584/{\blue $2^{256}\!-\!1$} & {\red 120+3,990}/923,420/{\blue $2^{625}\!-\!1$} \\ \hline
\end{tabular}}}
\caption{{Number of inequalities per instrument arm for different $|{\cal X}|$ and $|{\cal Y}|$}.\\
\textcolor{red}{KM (marginal+partition)}, these are necessary, but not sufficient; our \cref{theo:redundancy}, these are necessary, sufficient and non-redundant; \textcolor{blue}{BMM}, these are necessary and sufficient, but redundant.
KM Marginal Inequalities are those from KM, proof of Proposition 1, equation (56); Partition Inequalities are obtained by considering the inequalities in KM Supplement p.3, but applied to {\bf all} possible partitions of ${\cal Y}$.\label{tab:num-ineqs-km-sgcr-bmm}}
\end{table}

\subsection{Example of inequalities for binary treatment that are not in KM's set}
Consider the simple setting with binary treatment ($|{\cal X}|=2$) and ternary outcome ($|{\cal Y}|=3$). For this setting, \cref{theorem:main-result} yields
\[
(2^3-1)^2-1 = 48
\]
inequalities per instrument arm. \Cref{theo:redundancy} reduces this to $42$ by removing redundant inequalities.

In contrast, even if one considers all possible partitions $P_{\cal Y}$ of the outcome space ${\cal Y}$ and uses the expression given on p.3 of KM's supplement to derive inequalities for each partition, this still leads to only 18 
inequalities on joint potential outcome probabilities. 

For example, although KM do include inequalities such as
\[
P(Y(x\!=\!1) \in \textcolor{blue}{\{1,2\}},Y(x\!=\!2) \in\textcolor{teal}{\{3\}}) \leq
P(Y \in \textcolor{blue}{\{1,2\}}, X\!=\!1 \mid z) +
P(Y \in \textcolor{teal}{\{3\}},  X\!=\!2 \mid z),
\]
for which both sets belong to the single partition
$\{\textcolor{blue}{\{1,2\}},\textcolor{teal}{\{3\}}\}$,  KM do not consider inequalities such as:
\[
P(Y(x\!=\!1) \in \textcolor{blue}{\{1,2\}},Y(x\!=\!2) \in\textcolor{purple}{\{1,3\}}) \leq
P(Y \in \textcolor{blue}{\{1,2\}}, X\!=\!1 \mid  z) +
P(Y \in \textcolor{purple}{\{1,3\}}, X\!=\!2 \mid  z)
\]
since the sets
$
\textcolor{blue}{\{1,2\}},\textcolor{purple}{\{1,3\}}
$
do not belong to a common partition.

In the proof of Proposition 1, equation (56), KM also consider bounds on marginal probabilities of the form $P(Y(x=k) \in A)$. This provides an additional $12$ marginal inequalities, but this still only leads to $12+18=30$ in total.\footnote{Note that marginal inequalities do not correspond to instances of KM's joint inequalities. This is because marginalizing a variable $Y(x_k)$ requires choosing $A_k = {\cal Y}$, the full state space, which in turn implies that the partition is trivial, so that $P_{\cal Y} = \{{\cal Y}\}$.}

Note that a direct application of Artstein's theorem, as considered by \citet{beresteanu} (BMM), would include an inequality for every event of the form:
\[
P\!\left(\,\,\vphantom{2^X}\left(Y(x\!=\!1),Y(x\!=\!2)\right)\in {\mathfrak{A}}\,\,\right)
\]
where ${\mathfrak{A}}$ is an {\bf arbitrary} subset of 
${\cal Y}\times {\cal Y}$. This leads to
\[
2^{{|{\cal Y}\times {\cal Y}|}}-1 = 2^{3^2}-1=2^9-1 = 511
\]
inequalities. It is a direct consequence of \cref{theo:redundancy} that
$511-42=469$ (or $>90\%$) of these inequalities are redundant. 
In general, the number of redundant inequalities arising from Artstein's theorem grows super-exponentially in $M$ and $K$; see \cref{tab:num-ineqs-km-sgcr-bmm}.

\subsection{Analytic example with binary treatment showing KM inequalities are not sharp for falsification with $|{\cal Z}|=4$} \label{app:counterexample:4arm}

It follows directly from \cref{theo:redundancy} and the count of inequalities that the set of inequalities considered by KM is not (in general) sharp for characterizing the identified set of potential outcome distributions, and hence does not (in general) suffice for computing bounds. However, an interested reader might still wonder whether the KM inequalities are sufficient for the purpose of falsification. For example, it is known in the case where the instrument, treatment and outcome are all binary that inequalities on margins alone lead to a sharp test for falsification; bounds on joint potential outcomes are not required.

In this section we answer this question by showing via a simple `analytic' example with binary treatment $X$, ternary outcome $Y$ and four levels of the instrument $Z$ that the set of inequalities considered by KM does not lead to a sharp falsification test.

In this example, the observed distribution given by the first three instrument levels suffices to point identify the joint distribution of potential outcomes.
The observed distribution given by the fourth instrument level violates inequalities in the set given by \cref{theo:redundancy}, but satisfies all inequalities proposed by KM. Consequently, KM would fail to reject the model whereas our characterization would correctly detect incompatibility.





Consider the following observed distribution $P(X,Y\mid Z)$:
\[
\begin{array}{c|@{\quad}ccc@{\quad\quad}ccc}
& \multicolumn{3}{c@{\quad\quad}}{X=1} & \multicolumn{3}{c}{X=2} \\[-6pt]
Z & Y=1 & Y=2 & Y=3 & Y=1 & Y=2 & Y=3\\
\hline
1 & \tfrac13 & \tfrac13 & \tfrac13 & 0 & 0 & 0\\
2 & 0 & 0 & 0 & \tfrac23 & 0 & \tfrac13\\
3 & 0 & \tfrac13 & 0 & \tfrac23 & 0 & 0\\
4 & 0 & \tfrac13 & 0 & \tfrac13 & 0 & \tfrac13
\end{array}
\]
Arms \(Z=1\) and \(Z=2\) correspond to a randomized experiment with perfect compliance. Data from these arms identify the marginals \(P(Y(x\!=\!1))\) and \(P(Y(x\!=\!2))\). Combining these marginals with the distribution observed under \(Z=3\) uniquely identifies the joint distribution to be:
\begin{equation}\label{eq:joint-ided}
P(Y(x\!=\!1),Y(x\!=\!2))
=
\frac13\delta_{(1,1)}
+
\frac13\delta_{(3,1)}
+
\frac13\delta_{(2,3)},
\end{equation}
where $\delta_{(i,j)}$ corresponds to a point mass on the event $\{Y(x\!=\!1)=i,Y(x\!=\!2)=j\}$.
Both our characterization and that of KM agree that the first three arms imply this joint distribution.

Now consider arm \(Z=4\). Under the identified joint distribution (\ref{eq:joint-ided}),
\[
P\!\left(Y(x\!=\!1)\in\{1,3\},\,Y(x\!=\!2)=1\right)
=
\frac23.
\]

However, the observed distribution under \(Z=4\) satisfies
\[
P(Y\in\{1,3\},X=1\mid Z=4)
+
P(Y=1,X=2\mid Z=4)
=
0+\frac13
=
\frac13.
\]

Hence the valid inequality
\begin{align}\label{eq:violation1}
P\!\left(Y(x\!=\!1)\in\{1,3\},\,Y(x\!=\!2)=1\right)
&\le
P(Y\in\{1,3\},X=1\mid Z=4)\nonumber\\
&\quad+
P(Y=1,X=2\mid Z=4).
\end{align}
is violated.
Likewise,
\[
P\!\left(Y(x\!=\! 1)\in\{1,3\},\,Y(x\!=\! 2)\in\{1,2\}\right)
=
\frac23,
\]
while
\[
P(Y\in\{1,3\},X=1\mid Z=4)
+
P(Y\in\{1,2\},X=2\mid Z=4)
=
0+\frac13
=
\frac13,
\]
so that
\begin{align}\label{eq:violation2}
P\!\left(Y(x\!=\!1)\in\{1,3\},\,Y(x\!=\!2)\in\{1,2\}\right)
&\le
P(Y\in\{1,3\},X=1\mid Z=4)\nonumber\\
&\quad +
P(Y\in\{1,2\},X=2\mid Z=4)
\end{align}
is also violated.

These inequalities (\ref{eq:violation1}) and (\ref{eq:violation2}) correspond to the set pairs
\((\{1,3\},\{1\})\) and \((\{1,3\},\{1,2\})\), respectively. Such pairs are excluded from the KM characterization because they do not belong to a common partition. Consequently, the distribution above satisfies all KM inequalities despite violating valid inequalities implied by the model.

Note that in their paper KM present their falsification tests via restrictions on the observed distribution (KM Main paper equations (12), (13), (14), and supplement equations (10), (11), (12)), rather than inequalities relating potential outcome probabilities and observed distributions. However, in their supplement KM prove that all of these inequalities relating observed distributions may be derived from 
bounds on the probability of potential outcome events which are either defined in terms of a single variable, i.e., marginal events, or joint events of the form (\ref{eq:event}), in which all sets are members of a common partition (this is the set of inequalities we refer to as ``KM's inequalities'' in this section). Since the observed distribution in this example is compatible with these inequalities, but is incompatible with the IV model, it also follows that this distribution will not violate any of the restrictions on the observed distribution used by KM in their procedures.

To summarize, if one were to restrict to the inequalities considered by KM, then one would not reject the model and, in this example, would indeed conclude that all potential-outcome functionals are point 
identified. In contrast, our characterization correctly rejects the model. 


\subsection{Numerical example with binary treatment showing KM inequalities are not sharp for falsification with  $|{\cal Z}|=3$}\label{app:counterexample:3arm}

To demonstrate that the phenomenon above is not limited to `degenerate' examples, we also conducted systematic computational searches using linear programming implemented through \texttt{CVXR} \citep{cvxr2020} with $|{\cal Z}|=3, |{\cal X}|=2, |{\cal Y}|=3$.
These searches produced numerous examples in which our characterization rejects the model while the KM characterization remains feasible. 

Consider this observed distribution from a three-arm IV study:
\medskip
\[
\begin{array}{c|@{\quad}ccc@{\quad\quad}ccc}
& \multicolumn{3}{c@{\quad\quad}}{X=1} & \multicolumn{3}{c}{X=2} \\[-6pt]
Z & Y=1 & Y=2 & Y=3 & Y=1 & Y=2 & Y=3\\
\hline
1 & 0.062 & 0.173 & 0.128 & 0.506 & 0.040 & 0.091 \\
2 & 0.049 & 0.342 & 0.056 & 0.260 & 0.135 & 0.158 \\
3 & 0.264 & 0.053 & 0.196 & 0.034 & 0.207 & 0.246
\end{array}
\]

The linear program implied by the KM inequalities remains feasible and returns bounds on $P(Y(x=2)=1)-P(Y(x=1)=1)$ of $[-0.312, -0.094]$. In contrast, the linear program based on our complete characterization is infeasible, thereby proving that the observed distribution is incompatible with the IV model.



\subsection{Numerical example with binary treatment showing KM inequalities are not sharp for bounds; $|{\cal Z}|=2$}\label{app:counterexample:2arm}

We additionally found examples where both characterizations are feasible but KM yields strictly wider bounds for causal parameters. Thus the loss of inequalities has practical consequences not only for falsification but also for identification.
%
\[
\begin{array}{c|@{\quad}ccc@{\quad\quad}ccc}
& \multicolumn{3}{c@{\quad\quad}}{X=1} & \multicolumn{3}{c}{X=2} \\[-6pt]
Z & Y=1 & Y=2 & Y=3 & Y=1 & Y=2 & Y=3\\
\hline
1 & 0.022 & 0.267 & 0.256 & 0.118 & 0.173 & 0.164 \\
2 & 0.425 & 0.132 & 0.041 & 0.068 & 0.133 & 0.201 \\
\end{array}
\]
\noindent The bounds on $P(Y(x=2)=2)-P(Y(x=1)=2)$ obtained under the KM characterization are
\[
[-0.146,\;0.367],
\]
whereas our characterization yields the strictly tighter interval
\[
[-0.065,\;0.234].
\]



\end{document}